\documentclass[10pt,a4paper]{amsart}
\usepackage{amsmath,amssymb,latexsym,tabularx}
\usepackage[dvips]{graphicx}
\textwidth=16.000cm \textheight=22.000cm \topmargin=0.00cm
\oddsidemargin=0.00cm \evensidemargin=0.00cm \headheight=14.4pt
\headsep=1.2500cm \numberwithin{equation}{section}
\newtheorem{theorem}{Theorem}[section]
\newtheorem{proposition}[theorem]{Proposition}
\newtheorem{corollary}[theorem]{Corollary}
\newtheorem{lemma}[theorem]{Lemma}
\theoremstyle{definition}
\newtheorem{example}[theorem]{Example}
\newtheorem{definition}[theorem]{Definition}
\theoremstyle{remark}

\title[Asymptotic Resemblance]{ Asymptotic Resemblance}

\author[Sh. Kalantari]{Sh. Kalantari}
 \address[Sh. Kalantari]{Mathematics and Computer Science Department,
 Amirkabir University of Technology, 424 Hafez Avenue, 15914 Tehran,
Iran.}
 \email{shahab.kalantari@aut.ac.ir}

\author[B.~Honari]{B. Honari$^\dag$}

\address[B.~Honari]{Mathematics and Computer Science Department,
 Amirkabir University of Technology, 424 Hafez Avenue, 15914 Tehran,
Iran.}

\address[B.~Honari]{$^\dag$ Corresponding author.}
\email{honari@aut.ac.ir}
\date{Oct, 25, 2014}
\keywords{asymptotic dimension, asymptotic resemblance, coarse structure, Higson compactification, proximity}
\subjclass[2010]{51F99, 53C23, 54C20, 18B30}

\begin{document}
\maketitle
\begin{abstract}
\emph{Uniformity} and \emph{proximity} are two different ways for defining small scale structures on a set.\emph{ Coarse structures} are large scale counterparts of uniform structures. In this paper, motivated by the definition of proximity, we develop the concept of asymptotic resemblance as a relation between subsets of a set to define a large scale structure on it. We use our notion of asymptotic resemblance to generalize some basic concepts of coarse geometry. We introduce a large scale compactification which in special cases agrees with the \emph{Higson compactification}. At the end we show that how the \emph{asymptotic dimension} of a metric space can be generalized to a set equipped with an asymptotic resemblance relation.
\end{abstract}
\section{Introduction and Preliminaries}
There are several ways to define \emph{small scale} structures on a set. In 1937 Weil \cite{Wei} defined the concept of \emph{uniformity}. Few years later Tukey \cite{Tuk} used the notion of \emph{uniform coverings} to find another definition for uniform spaces. In 1950 Efremovich \cite{Ef1,Ef2} used \emph{proximity relations} to define a small scale structure on a set. He axiomatized the relation "$A$ \emph{is near} $B$" for subsets $A$ and $B$ of a set. Let us recall the definition of a proximity space.
\begin{definition}\label{prox}
A relation $\delta$ on the family of all subsets of a nonempty set $X$ is called a \emph{proximity} on $X$ if for all $A,B,C\subseteq X$, it satisfies the following properties (By $A\bar{\delta} B$ we mean that $A\delta B$ does not hold.)\\
i) If $A\delta B$ then $B\delta A$.\\
ii) $\emptyset \bar{\delta} A$.\\
iii) If $A\bigcap B\neq \emptyset$ then $A\delta B$.\\
iv) $A\delta(B\bigcup C)$ if and only if $A\delta B$ or $A\delta C$.\\
v) If $A\bar{\delta} B$ then there is $E\subseteq X$ such that $A\bar{\delta} E$ and $(X\setminus E)\bar{\delta} B$.\\
The pair $(X,\delta)$ is called a proximity space.
\end{definition}
There are also some ways to define \emph{large scale} structures on a set. In recent contexts one can find notions of \emph{coarse structures} \cite{Roe},\emph{ large scale structures} \cite{Alt} and \emph{ball structures} \cite{Nor}. A coarse structure $\mathcal{E}$ on a set $X$ is a family of subsets of $X\times X$, such that all subsets of a member of $\mathcal{E}$ are members of $\mathcal{E}$ and for all $E,F\in \mathcal{E}$ the sets $E^{-1}$, $E\circ F$ and $E\bigcup F$ are in $\mathcal{E}$. The pair $(X,\mathcal{E})$ is called a \emph{coarse space}. Let us recall that $E\circ F=\{(x,y)\mid (x,z)\in F,(z,y)\in E\,for\,some\,z\in X\}$ and $E^{-1}=\{(x,y)\mid (y,x)\in E\}$, for all $E,F\subseteq X\times X$. A member of $\mathcal{E}$ is called an \emph{entourage}. A coarse structure $\mathcal{E}$ is called \emph{unitary} if it contains the diagonal $\Delta =\{(x,x)\mid x\in X\}$. From now on by "coarse structure" we mean a "unitary coarse structure". A coarse structure is known as a large scale counterpart of a uniformity. In section 2 we try to introduce a large scale counterpart of proximity. For this reason we axiomatize the relation \emph{$A$ and $B$ are asymptotically alike} for two subsets $A$ and $B$ of a set $X$ and introduce the notion of \emph{asymptotic resemblance}. We call a set equipped with an asymptotic resemblance relation, an asymptotic resemblance (an AS.R.) space. In section 2 we show that how one can generalize basic concepts of coarse geometry (coarse maps, coarse connectedness, coarse subspace etc) by our definition. Also in this section we show that every coarse structures on a set $X$ can induce an asymptotic resemblance relation on $X$.\\
In section 3 we investigate the relation between coarse structures and asymptotic resemblance relations. We give an example of two different coarse structures on a set $X$ such that they induce a same asymptotic resemblance relation on $X$. We show how asymptotic resemblance relations on a set $X$ can admit an equivalence relation on the family of all coarse structures on $X$.\\
A coarse structure $\mathcal{E}$ on a topological space $X$ is said to be \emph{compatible with the topology} of $X$ if each entourage is contained in an open entourage. A compatible coarse structure on a topological space is called \emph{proper} if each bounded subset has compact closure. One can easily check that a unitary coarse structure is compatible with the topology of a space if and only if it contains an open entourage containing the diagonal \cite{Wr}. Let $\mathcal{E}$ be a proper coarse structure on a topological space $(X,\mathcal{T})$. A continuous and bounded map $f:X\rightarrow \mathbb{C}$ is called a \emph{Higson function} if for each $E\in \mathcal{E}$ and $\epsilon >0$ there exists a compact subset $K$ of $X$ such that $| f(x)-f(y)| <\epsilon$ for all $(x,y)\in E\setminus (K\times K)$. The family of all Higson functions is denoted by $C_{h}(X)$. The Gelfand-Naimark theorem on $C^{*}$-algebras shows that there is a compactification $hX$ of $X$, such that $C(hX)$ (the family of all continuous functions on $hX$) and $C_{h}(X)$ are isomorphic (section 2.3 of \cite{Roe}). The compactification $hX$ of $X$ is called the \emph{Higson compactification} of $X$. The compact set $\nu X=hX\setminus X$ is called the \emph{Higson corona} of $X$. In section 4 we use our notion of asymptotic resemblance to make a compactification of a space (the \emph{asymptotic compactification}) that in some cases agrees with the Higson compactification of coarse spaces. We are going to use the \emph{Wallman compactification} of a topological space to genarate our desired compactification. Let us recall the Wallman compactification of a topological space briefly (\cite{Wil}).\\
Let $(X,\mathcal{T})$ be a  Hausdorff topological space and let $\gamma X$ be the family of all closed ultrafilters on $X$. For each open subset $U$ of $X$, set $U^{*}=\{\mathcal{F}\in \gamma X\mid U\notin \mathcal{F}\}$. It is straightforward to show that $\mathcal{F}\in U^{*}$ if and only if $\mathcal{F}$ contains a subset of $U$. The family $\mathcal{B}=\{U^{*}\mid U\, is\, open\, in\, X\}$ is a basis for a topology on $\gamma X$ and $\gamma X$ is compact by this topology. Let $\sigma_{x}$ denotes the unique closed ultrafilter that converges to $x\in X$. The map $\sigma: X\rightarrow \gamma X$ defined by $\sigma(x)=\sigma_{x}$ is a topological embedding and $\gamma X$ is called the \emph{Wallman compactification} of $X$.\\
A \emph{cluster} $\mathcal{C}$ in a proximity space $(X,\delta)$ is a family of subsets of $X$ such that for all $A,B\in \mathcal{C}$ we have $A\delta B$, if $A,B\subseteq X$ and $A\bigcup B\in \mathcal{C}$ then $A\in \mathcal{C}$ or $B\in \mathcal{C}$ and if $A\delta B$ for all $B\in \mathcal{C}$ then $A\in \mathcal{C}$. A proximity space $(X,\delta)$ is said to be \emph{separated} if $x\delta y$ implies $x=y$, for all $x,y\in X$. A proximity $\delta$ on a topological space $(X,\mathcal{T})$ is said to be \emph{compatible with $\mathcal{T}$} if $a\in \bar{A}$ and $a\delta A$ are equivalent. Let $\mathfrak{X}$ denotes the family of all clusters in a separated proximity space $(X,\delta)$. For $\mathfrak{M} , \mathfrak{N} \subseteq \mathfrak{X}$ define $\mathfrak{M} \delta^{*} \mathfrak{N}$ if $A\subseteq X$ absorbs $\mathfrak{M}$ and $B\subseteq X$ absorbs $\mathfrak{N}$ then $A\delta B$. A set $D$ \emph{absorbs} $\mathfrak{M} \subseteq \mathfrak{X}$ means that $A\in \mathcal{C}$ for all $\mathcal{C} \in \mathfrak{M}$. The relation $\delta^{*}$ is a proximity on $\mathfrak{X}$. The pair $(\mathfrak{X},\delta^{*})$ is a compact proximity space and it is called the \emph{Smirnov compactification } of $(X,\delta)$ (section 7 of \cite{Nai}). In section 5 we introduce a proximity on an AS.R. space such that its Smirnov compactification agrees with the asymptotic compactification.\\

There are several equivalent definitions for \emph{asymptotic dimension} of a metric space (\cite{asdim}). In this paper by asymptotic dimension of a metric space $(X,d)$ we mean the following definition.
\begin{definition}
Let $X$ be a metric space. The inequality $\operatorname{asdim} X\leq n$ means that for each uniformly bounded cover $\mathcal{U}$ of $X$ there exists uniformly bounded cover $\mathcal{V}$ of $X$ such that $\mathcal{U}$ refines $\mathcal{V}$ and $\mu(\mathcal{V})\leq n+1$. For a family $\mathcal{M}$ of subsets of a set $X$, $\mu(\mathcal{M})$ denotes the multiplicity of $\mathcal{M}$ i.e the greatest number of elements of $\mathcal{M}$ that meets a point of $X$. By $\operatorname{asdim}X=n$ we mean that $\operatorname{asdim}X\leq n$ and $\operatorname{asdim}X\leq n-1$ does not hold. For a metric space $X$, $\operatorname{asdim}X$ is called the \emph{asymptotic dimension} of $X$.
\end{definition}
In section 6 we show how one can generalize the notion of asymptotic dimension to AS.R. spaces.\\

In this paper we denote by $d_{H}(A,B)$ the \emph{Hausdorff distance} between subsets $A$ and $B$ of a metric space $(X,d)$. Let us recall one more thing here. A proper map $f:X\rightarrow Y$ between metric spaces $(X,d)$ and $(Y,d^{\prime})$ is said to be a \emph{coarse map} if for each $r>0$ there exists $s>0$ such that $d(x,x^{\prime})<r$ implies $d^{\prime}(f(x),f(x^{\prime}))<s$.

\section{Asymptotic resemblance}

\begin{definition}\label{setare}
Let $X$ be a metric space. We say that two subsets $A$ and $B$ of $X$ are \emph{asymptotically alike} and we denote it by $A\lambda B$, if $d_{H}(A,B)<\infty$. We assume that $d_{H}(\emptyset,\emptyset)=0$ and $d_{H}(\emptyset,A)=\infty$ for all $\emptyset \neq A\subseteq X$.
\end{definition}
 Let us denote the open ball of radius $r>0$ around $x\in X$ by $\textbf{B}(x,r)$ and let $\textbf{B}(A,r)=\bigcup_{a\in A}\textbf{B}(x,r)$ for each subset $A$ of $X$. The above definition states that $A\lambda B$ if and only if there is $r>0$ such that $A\subseteq \textbf{B}(B,r)$ and $B\subseteq \textbf{B}(A,r)$.\\
 Let $(x_{n})_{n\in \mathbb{N}}$ and $(y_{n})_{n\in \mathbb{N}}$ be two sequences in a metric space $(X,d)$. If there exists $k>0$ such that $d(x_{n},y_{n})<k$ for all $n\in \mathbb{N}$ then we have $\{x_{i}\mid i\in I\} \lambda \{y_{i}\mid i\in I\}$ for each $I\subseteq \mathbb{N}$. The converse is also true.
\begin{lemma}\label{hhhh}
Let $(X,d)$ be a metric space. Suppose that $(x_{n})_{n\in \mathbb{N}}$ and $(y_{n})_{n\in \mathbb{N}}$ are two sequences in $X$ such that for each subset $I$ of $\mathbb{N}$ we have $\{x_{i}\mid i\in I\}\lambda \{y_{i}\mid i\in I\}$. Then, there exists $k>0$ such that $d(x_{n},y_{n})<k$ for all $n\in \mathbb{N}$.
\end{lemma}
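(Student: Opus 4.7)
The plan is to argue by contradiction. Suppose $\sup_n d(x_n,y_n)=\infty$. After passing to a subsequence and reindexing, assume $d(x_n,y_n)\to\infty$. Applied to $I=\mathbb{N}$, the hypothesis gives $d_H(\{x_n\},\{y_n\})<\infty$; so if $(x_n)$ were bounded then $(y_n)$ would also be bounded, forcing $d(x_n,y_n)$ to stay bounded, a contradiction. Hence $(x_n)$, and by symmetry $(y_n)$, must be unbounded.

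The heart of the proof is to produce a sub-subsequence $(n_k)$ enjoying three simultaneous forms of geometric spread: $d(x_{n_k},y_{n_k})\ge 4^k$, $d(x_{n_k},x_{n_j})\ge 4^k$, and $d(y_{n_k},y_{n_j})\ge 4^k$ for all $j<k$. I would build $(n_k)$ inductively, and the step I expect to be the main technical obstacle is showing that at each stage the set of admissible $n$ is infinite. This is where the hypothesis must be used non-trivially: if the admissible set were finite, then for all sufficiently large $n$ either $x_n$ lies in the bounded set $V_x=\bigcup_{j<k}\textbf{B}(x_{n_j},4^k)$ or $y_n$ lies in the bounded set $V_y=\bigcup_{j<k}\textbf{B}(y_{n_j},4^k)$. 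By pigeonhole, an infinite $I\subseteq\mathbb{N}$ exists on which, say, $\{x_n\}_{n\in I}\subseteq V_x$ is bounded; applying the hypothesis to $I$ then forces $\{y_n\}_{n\in I}$ bounded as well, so $d(x_n,y_n)$ is bounded on $I$, contradicting $d(x_n,y_n)\to\infty$.

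With the spread subsequence $(n_k)$ in hand, apply the hypothesis to $I=\{n_k:k\in\mathbb{N}\}$ to obtain $r<\infty$ and a map $\phi:\mathbb{N}\to\mathbb{N}$ with $d(y_{n_k},x_{n_{\phi(k)}})\le r$ for every $k$. The $y$-spread forces $\phi$ to have uniformly bounded fibres: two indices $k<k'$ sharing the same $\phi$-value satisfy $4^{k'}\le d(y_{n_k},y_{n_{k'}})\le 2r$, hence $k'\le\log_4(2r)$. In particular $\phi(k)\to\infty$ as $k\to\infty$, and the undirected graph on $\mathbb{N}$ with edges $\{k,\phi(k)\}$ has bounded degree, so it admits an infinite independent set $I'$ (a greedy construction works).

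Finally, for any $k\in I'$ and any $l\in I'\setminus\{k\}$, independence gives $l\neq\phi(k)$, so using the $x$-spread,
\[
  d(y_{n_k},x_{n_l})\ge d(x_{n_l},x_{n_{\phi(k)}})-r\ge 4^{\phi(k)}-r.
\]
Combined with $d(y_{n_k},x_{n_k})\ge 4^k$ this yields $\inf_{l\in I'}d(y_{n_k},x_{n_l})\ge\min(4^k,4^{\phi(k)}-r)$, which tends to $\infty$ along $k\in I'$. Hence $d_H\bigl(\{x_i\}_{i\in I^*},\{y_i\}_{i\in I^*}\bigr)=\infty$ with $I^*=\{n_k:k\in I'\}$, contradicting the hypothesis applied to $I^*$.
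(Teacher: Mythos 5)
Your proof is correct, and while its skeleton (extract a separated subsequence, then apply the hypothesis to that index set and contradict the resulting finite Hausdorff distance) is the same as the paper's, the two arguments part ways in how the final contradiction is reached. Your admissibility step --- if only finitely many indices are admissible, then infinitely many $x_n$ or $y_n$ lie in a fixed bounded set, and the hypothesis applied to that infinite index set bounds $d(x_n,y_n)$ there --- is exactly the paper's Step 1 in disguise. The substantive difference is that the paper's inductive construction also separates the cross terms: $k_{i+1}$ is chosen so that all later $x$'s \emph{and} $y$'s avoid $\textbf{B}(\{x_{k_i},y_{k_i}\},k_i)$, so once $d_H(A,B)<s$ is granted, every late $y_{k_j}$ is forced within $s$ of one of the finitely many early $x_{k_r}$'s, and Step 1 gives the contradiction in one line. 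Your spread conditions control only the $x$--$x$, $y$--$y$ and same-index $x$--$y$ distances, so a late $y_{n_k}$ could a priori sit near some $x_{n_l}$ with $l\neq k$; you repair this with the matching $\phi$, the bounded-fibre estimate $4^{k'}\le 2r$, and the infinite independent set $I'$, finally exhibiting an index set $I^*$ on which the Hausdorff distance is genuinely infinite. Both finishes are valid: the paper's stronger separation buys a shorter ending, while your graph-theoretic extraction shows the cross-separation need not be built into the construction at all. Two small points to tidy: insist that $n_k\notin\{n_1,\dots,n_{k-1}\}$ (free, since the admissible set is infinite), and discard the finitely many $k$ with $4^k\le r$ so that $\phi(k)\neq k$ and the edge $\{k,\phi(k)\}$ is never a loop.
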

\begin{proof}
Suppose, contrary to our claim, that for each $n\in \mathbb{N}$ there is some $i_{n}\in \mathbb{N}$ such that $d(x_{i_{n}},y_{i_{n}})>n$. Without loss of generality we can assume that we have $i_{n}=n$ for each $n\in \mathbb{N}$. We derive a contradiction by two steps.\\
Step 1: We claim that for each $x\in X$ and $s>0$, the index set $I=\{i\in \mathbb{N}\mid x_{i}\in \textbf{B}(x,s)\}$ is finite. Let $C=\{x_{i}\mid i\in I\}$ and $D=\{y_{i}\mid i\in I\}$. Then $C\lambda D$, let $d_{H}(C,D)=r$. If $j\in I$ we have:
$$i<d(x_{i},y_{i})\leq d(x_{i},x_{j})+d(x_{j},y_{i})<2s+r$$
This implies that $I$ is finite. Similarly we can prove that for each bounded subset $D$ of $X$ the index set $J=\{j\in \mathbb{N}\mid y_{j}\in D\}$ is finite.\\
Step 2: Set $E_{k}=\{x_{n}\mid n\geq k\}\bigcup \{y_{n}\mid n\geq k\}$ for $k\in \mathbb{N}$. By step 1, for each bounded set $D$, there is $k\in \mathbb{N}$ such that $E_{k}\bigcap D=\emptyset$. Let $k_{1}=1$. For each $i\in \mathbb{N}$ choose $k_{i+1}\in \mathbb{N}$ such that $E_{k_{i+1}}\bigcap
\textbf{B}(\{x_{k_{i}},y_{k_{i}}\},k_{i})=\emptyset$. Let $A=\{x_{k_{i}}\mid i\in N\}$ and $B=\{y_{k_{i}}\mid i\in N\}$. We have $A\lambda B$ so there exists $s>0$ such that $A\subseteq \textbf{B}(B,s)$ and $B\subseteq \textbf{B}(A,s)$. Now choose $k_{i}>s$. For $j,l\geq i$ let $\alpha=\min\{j,l\}$, we have $d(y_{k_{j}},x_{k_{l}})\geq k_{\alpha}\geq k_{i}>s$. Therefore for each $j\geq i$ we have $y_{k_{j}}\in\textbf{ B}(x_{k_{r}},s)$ for some $r=1,...,i-1$, which means the set $\{j\in \mathbb{N}\mid y_{j}\in \bigcup_{r=1}^{i-1} \textbf{B}(x_{k_{r}},s)\}$ is infinite and it contradicts step 1 of the proof.

\end{proof}

It is well known that a map $f:X\rightarrow Y$ between metric spaces $X$ and $Y$ is uniformly continuous if and only if for two subsets $A$ and $B$ of $X$, $A\delta B$ implies $f(A)\delta f(B)$ (\cite{Nai} 4.8). Where $\delta$ denotes the metric proximity i.e $A\delta B$ if and only if $d(A,B)=0$. The following theorem is the large scale counterpart of this fact.
\begin{theorem}\label{5setare}
Let $X$ and $Y$ be two metric spaces. A proper map $f:X\rightarrow Y$ is a coarse map if and only if for each asymptotically alike subsets $A$ and $B$ of $X$, $f(A)$ and $f(B)$ are asymptotically alike too.
\end{theorem}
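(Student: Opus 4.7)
The plan is to prove the two directions separately: the forward direction is a direct unpacking of definitions, while the reverse direction will go by contradiction and lean crucially on Lemma \ref{hhhh}.

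For the forward implication, suppose $f$ is a coarse map and $A\lambda B$ in $X$. Then $d_H(A,B) < \infty$, so there exists $r > 0$ with $A \subseteq \mathbf{B}(B,r)$ and $B \subseteq \mathbf{B}(A,r)$. Applying the coarseness condition at this $r$, we obtain $s > 0$ such that $d(x,x') < r$ implies $d'(f(x),f(x')) < s$. Given any $a \in A$, pick $b \in B$ within distance $r$ and conclude $d'(f(a),f(b)) < s$; symmetrically in the other direction. Hence $f(A) \subseteq \mathbf{B}(f(B),s)$ and $f(B) \subseteq \mathbf{B}(f(A),s)$, giving $f(A)\lambda f(B)$.

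For the reverse implication, I argue by contradiction. Suppose $f$ is proper and preserves asymptotic alikeness, but is not a coarse map. Then there exists $r > 0$ such that for every $n \in \mathbb{N}$ one can choose $x_n, x_n' \in X$ with $d(x_n, x_n') < r$ and $d'(f(x_n), f(x_n')) > n$. For any index set $I \subseteq \mathbb{N}$, setting $A_I = \{x_i : i \in I\}$ and $B_I = \{x_i' : i \in I\}$, the uniform bound $d(x_i, x_i') < r$ forces $A_I \subseteq \mathbf{B}(B_I, r)$ and $B_I \subseteq \mathbf{B}(A_I, r)$, so $A_I \lambda B_I$. By hypothesis, $f(A_I) \lambda f(B_I)$, i.e.\ $\{f(x_i) : i \in I\} \lambda \{f(x_i') : i \in I\}$ for every $I \subseteq \mathbb{N}$. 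Now Lemma \ref{hhhh} applied to the sequences $(f(x_n))$ and $(f(x_n'))$ in $Y$ produces a constant $k > 0$ with $d'(f(x_n), f(x_n')) < k$ for all $n$, directly contradicting $d'(f(x_n), f(x_n')) > n$.

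The main obstacle is the reverse direction, and specifically recognizing that Lemma \ref{hhhh} is tailor-made to convert the ``set-level'' hypothesis into the ``pointwise-sequence'' conclusion one needs to contradict failure of coarseness. Once this is seen, the only care required is in the construction of the sequences witnessing non-coarseness and verifying that every sub-indexed pair of sets is asymptotically alike, which is automatic from the uniform bound $d(x_n, x_n') < r$. Note that the properness of $f$ is not actually invoked in either direction of this proof; it is part of the definition of ``coarse map'' rather than something the asymptotic resemblance condition needs to see.
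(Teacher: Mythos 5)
Your proof is correct and follows essentially the same route as the paper: the forward direction by unpacking the definition of coarseness to transport the Hausdorff-distance bound, and the reverse direction by contradiction via Lemma \ref{hhhh} applied to the image sequences. Your added observation that every sub-indexed pair $A_I\lambda B_I$ must be checked (so that the hypothesis of Lemma \ref{hhhh} is genuinely met) makes explicit a step the paper leaves implicit, and your remark about properness is accurate.
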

\begin{proof}
Suppose that $f:X\rightarrow Y$ is a coarse map. Let $A$ and $B$ be two subsets of $X$ such that $A\subseteq \textbf{B}(B,r)$ and $B\subseteq \textbf{B}(A,r)$ for some $r>0$. By hypothesis there exists $s>0$ such that $d(x,x^{\prime})<r$ yields $d(f(x),f(x^{\prime}))<s$, so $f(A)\subseteq \textbf{B}(f(B),s)$ and $f(B)\subseteq \textbf{B}(f(A),s)$.\\
To prove the converse, assume that $f$ is not a coarse map. So there are $r>0$ and sequences $x_{n}$ and $y_{n}$ in $X$ such that $d(x_{n},y_{n})<r$ and $d(f(x_{n}),f(y_{n}))>n$. But the sequences $(f(x_{n}))_{n\in \mathbb{N}}$ and $(f(y_{n}))_{n\in \mathbb{N}}$ satisfy the hypothesis of \ref{hhhh}, a contradiction.
\end{proof}
\begin{proposition}\label{2setare}
Let $X$ be a metric space. The relation $\lambda$ defined in \ref{setare} is an equivalence relation on the family of all subsets of $X$ and it has following properties:\\
i) $A_{1}\lambda B_{1}$ and $A_{2}\lambda B_{2}$ implies $(A_{1}\bigcup A_{2})\lambda (B_{1}\bigcup B_{2})$.\\
ii) $(B_{1}\bigcup B_{2})\lambda A$ and $B_{1},B_{2}\neq \emptyset$ implies that there are nonempty subsets $A_{1}$ and $A_{2}$ of $A$ such that $A=A_{1}\bigcup A_{2}$ and we have $B_{i}\lambda A_{i}$ for $i\in \{1,2\}$.
\end{proposition}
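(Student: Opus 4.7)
The plan is to handle the three claims in order of increasing difficulty. The equivalence property follows at once from the fact that $d_H$ is a pseudometric on the nonempty subsets of $X$ (with the value $\infty$ allowed): reflexivity is $d_H(A,A)=0$, symmetry is built into the definition of $d_H$, and transitivity is the triangle inequality $d_H(A,C)\leq d_H(A,B)+d_H(B,C)$. One only has to check separately that the empty set is $\lambda$-related only to itself, which is exactly the convention fixed in Definition~\ref{setare}.

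For (i), if $A_1\subseteq\mathbf{B}(B_1,r)$ and $B_1\subseteq\mathbf{B}(A_1,r)$, and similarly with constant $s$ for the pair $A_2,B_2$, then taking $t=\max\{r,s\}$ gives $A_1\cup A_2\subseteq\mathbf{B}(B_1\cup B_2,t)$ and $B_1\cup B_2\subseteq\mathbf{B}(A_1\cup A_2,t)$, so $(A_1\cup A_2)\lambda(B_1\cup B_2)$.

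The main obstacle is (ii); the point to notice is that the decomposition of $A$ need not be disjoint, which gives us enough room to work. Since $(B_1\cup B_2)\lambda A$ and $B_1\cup B_2\neq\emptyset$, we may pick $r>0$ with $A\subseteq\mathbf{B}(B_1\cup B_2,r)$ and $B_1\cup B_2\subseteq\mathbf{B}(A,r)$. I would then set
\[
A_1=A\cap\mathbf{B}(B_1,r),\qquad A_2=A\cap\mathbf{B}(B_2,r).
\]
Each $A_i$ is nonempty: for any $b\in B_i$ the containment $B_i\subseteq\mathbf{B}(A,r)$ furnishes $a\in A$ with $d(a,b)<r$, so $a\in A_i$. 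The sets cover $A$: any $a\in A$ has some $b\in B_1\cup B_2$ with $d(a,b)<r$, and $a$ lies in $A_1$ or $A_2$ depending on which $B_i$ contains $b$. Finally $A_i\lambda B_i$ holds: $A_i\subseteq\mathbf{B}(B_i,r)$ by construction, and the same argument that gave nonemptiness shows $B_i\subseteq\mathbf{B}(A_i,r)$. Hence $d_H(A_i,B_i)\leq r<\infty$.
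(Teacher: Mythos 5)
Your proof is correct and follows essentially the same route as the paper: the equivalence and union properties are handled directly, and for (ii) you use exactly the paper's decomposition $A_i=\mathbf{B}(B_i,r)\cap A$. You merely spell out the nonemptiness, covering, and two-sided containment checks that the paper leaves implicit.
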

\begin{proof}
It is straightforward to show $\lambda$ is an equivalence relation on the family of all subsets of $X$ and it satisfies property (i). For property (ii) assume that $B_{1}\bigcup B_{2}\subseteq \textbf{B}(A,r)$ and $A\subseteq \textbf{B}(B_{1}\bigcup B_{2},r)$ for some $r>0$ and $B_{1},B_{2}\neq \emptyset$. For $i\in \{1,2\}$ let $A_{i}=\textbf{B}(B_{i},r)\bigcap A$. We have $A=A_{1}\bigcup A_{2}$ and $A_{i}\lambda B_{i}$ for $i\in \{1,2\}$.
\end{proof}
\begin{definition}\label{4setare}
Let $X$ be a set. We call a binary relation $\lambda$ on the power set of $X$ an \emph{asymptotic resemblance} (an AS.R.) if it is an equivalence relation on the family of all subsets of $X$ and satisfies the properties (i) and (ii) of \ref{2setare}. For subsets $A$ and $B$ of $X$ we say that $A$ and $B$ are \emph{asymptotically alike} if $A\lambda B$. By $A\bar{\lambda} B$ we mean that $A$ and $B$ are not asymptotically alike. We call the pair $(X,\lambda)$ an AS.R. space.
\end{definition}
In a metric space $(X,d)$, we call the relation defined in \ref{setare} \emph{the AS.R. associated to the metric $d$} on $X$.
\begin{proposition}\label{6setare}
Let $(X,\lambda)$ be an AS.R. space. If $A\lambda B$ and $\emptyset \neq A_{1}\subseteq A$ then there is $\emptyset \neq B_{1}\subseteq B$ such that $A_{1}\lambda B_{1}$.
\end{proposition}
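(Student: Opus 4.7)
The plan is to reduce the statement directly to property (ii) of Definition \ref{4setare}, applied to a suitable decomposition of $A$.

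I split into two cases according to whether $A\setminus A_{1}$ is empty. Suppose first that $A\setminus A_{1}\neq \emptyset$. Set $A_{2}:=A\setminus A_{1}$, so that $A=A_{1}\cup A_{2}$ with both $A_{1}$ and $A_{2}$ nonempty, and write the hypothesis as $(A_{1}\cup A_{2})\,\lambda\, B$. Property (ii) applies verbatim: with the roles of the ``$B_{i}$'' in Proposition \ref{2setare} played by $A_{1}$ and $A_{2}$, and the role of ``$A$'' played by $B$, it produces nonempty subsets $B_{1},B_{2}\subseteq B$ with $B=B_{1}\cup B_{2}$ and $A_{i}\,\lambda\, B_{i}$ for $i=1,2$. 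Then $B_{1}$ is the required witness.

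Suppose instead that $A_{1}=A$. I take $B_{1}:=B$ and only need to check that $B\neq \emptyset$. This reduces to the auxiliary observation that, in any AS.R. space, no nonempty set can be asymptotically alike to the empty set: indeed, if $A\,\lambda\, \emptyset$ held with $A\neq \emptyset$, applying property (ii) to the trivial decomposition $A=A\cup A$ (both factors nonempty) would force $\emptyset$ itself to be a union of two nonempty subsets, which is impossible.

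No substantive obstacle arises; the content of the proposition is essentially a relabelling of the hypothesis of axiom (ii), and the only step requiring a moment's care is the degenerate case $A_{1}=A$, handled by the preliminary remark about asymptotic isolation of $\emptyset$.
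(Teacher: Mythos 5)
Your proof is correct and follows the same route as the paper, which simply cites property (ii) of Proposition \ref{2setare}; you have just written out the decomposition $A=A_{1}\cup(A\setminus A_{1})$ explicitly. Your treatment of the degenerate case $A_{1}=A$, via the observation that no nonempty set can be $\lambda$-related to $\emptyset$, is a detail the paper leaves implicit, and it is handled correctly.
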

\begin{proof}
It is an immediate consequence of \ref{2setare} (ii).
\end{proof}
\begin{proposition}\label{sss}
Let $\lambda$ be an AS.R. on a set $X$. Suppose that $A,B,C\subseteq X$ and $A\subseteq B\subseteq C$. If $A\lambda C$ then $A\lambda B$.
\end{proposition}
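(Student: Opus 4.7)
The plan is to reduce the statement to a direct consequence of property (i) of Proposition \ref{2setare} combined with the equivalence properties of $\lambda$. The idea is to ``absorb'' $B$ into both sides of the relation $A\lambda C$ by using the reflexive relation $B\lambda B$.

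First, since $\lambda$ is an equivalence relation on the power set of $X$, we have $B\lambda B$. I would then apply property (i) of Proposition \ref{2setare} to the two relations $A\lambda C$ and $B\lambda B$ to obtain
\[
(A\cup B)\,\lambda\,(C\cup B).
\]
Now the hypothesis $A\subseteq B\subseteq C$ gives the set-theoretic identities $A\cup B=B$ and $C\cup B=C$, so this collapses immediately to $B\lambda C$.

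Finally, from $A\lambda C$ and $B\lambda C$, the symmetry and transitivity of $\lambda$ yield $A\lambda B$, as desired.

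There is no real obstacle here; the only subtle point is recognizing the right axiom to apply. The natural temptation is to invoke property (ii) (decomposing $C$ as $B\cup(C\setminus B)$), but that leads to a nonempty piece $A_1\subseteq A$ with $A_1\lambda B$, from which one cannot recover $A\lambda B$ without circularity. Pairing the hypothesis with $B\lambda B$ via property (i) avoids this difficulty entirely. No case analysis on whether $A$ is empty is needed, since the union identities $A\cup B=B$ and $C\cup B=C$ remain valid in that degenerate situation.
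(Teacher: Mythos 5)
Your proof is correct and follows essentially the same route as the paper: the paper pairs $A\lambda C$ with the reflexive instance $(B\setminus A)\lambda(B\setminus A)$ under property (i) to get $B\lambda C$, then concludes by transitivity, exactly as you do. Your choice of $B\lambda B$ instead of $(B\setminus A)\lambda(B\setminus A)$ is a cosmetic simplification of the same argument.
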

\begin{proof}
 The property (i) of \ref{2setare} leads to $((B\setminus A)\bigcup A)\lambda ((B\setminus A)\bigcup C)$. Thus $B\lambda C$ and since $\lambda$ is an equivalence relation $A\lambda B$.
\end{proof}
Let us recall that on a coarse space $(X,\mathcal{E})$, $E(A)=\{y\in X\mid (x,y)\in E\,for\,some\,x\in A\}$ for all $E\in \mathcal{E}$ and all $A\subseteq X$.
\begin{example}\label{coarse}
Suppose that $\mathcal{E}$ is a coarse structure on a set $X$. For any two subsets $A$ and $B$ of $X$, define $A\lambda_{\mathcal{E}} B$ if $A\subseteq E(B)$ and $B\subseteq E(A)$ for some $E\in \mathcal{E}$. The relation $\lambda_{\mathcal{E}}$ is an asymptotic resemblance on $X$. We call $\lambda_{\mathcal{E}}$ \emph{the AS.R. associated to the coarse structure} $\mathcal{E}$ on $X$.
\end{example}
In the next section we will investigate the relation between coarse structures and asymptotic resemblance relations in more details.
\begin{example}
Let $X$ be a set. For any two subsets $A$ and $B$ of $X$, define $A\lambda B$ if $A\Delta B=(A\setminus B)\bigcup (B\setminus A)$ is finite. The relation $\lambda$ is an AS.R. on $X$ that we call it the \emph{discrete asymptotic resemblance} on a set $X$.
\end{example}
\begin{definition}
Let $\lambda$ be an AS.R. on a set $X$. We say a subset $A$ of $X$ is \emph{bounded} if $A\lambda x$, for some $x\in X$. We assume that the empty set is bounded.
\end{definition}
Let $\lambda$ be the AS.R. associated to a coarse structure $\mathcal{E}$ on a set $X$. It is easy to verify that $D\subseteq X$ is bounded if and only if it is bounded with respect to $\mathcal{E}$
\begin{proposition}\label{4setare}
Let $\lambda$ be an AS.R. on a set $X$ and let $A\subseteq X$. If $A\lambda x$ for some $x\in X$ and $\emptyset \neq B\subseteq A$ then $B\lambda x$. Thus all subsets of a bounded set are bounded.
\end{proposition}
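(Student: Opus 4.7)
The plan is to derive this directly from Proposition \ref{6setare}, which says that if $A\lambda B$ and $\emptyset \neq A_{1}\subseteq A$, then there exists $\emptyset\neq B_{1}\subseteq B$ with $A_{1}\lambda B_{1}$.

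Given the hypothesis $A\lambda x$ (i.e.\ $A\lambda \{x\}$) together with $\emptyset\neq B\subseteq A$, I would first apply Proposition \ref{6setare} with the roles $A\mapsto A$, $B\mapsto \{x\}$ and $A_{1}\mapsto B$. This produces a nonempty subset $B_{1}\subseteq \{x\}$ such that $B\lambda B_{1}$. The only nonempty subset of the singleton $\{x\}$ is $\{x\}$ itself, so necessarily $B_{1}=\{x\}$, and hence $B\lambda x$ as desired. The concluding assertion that every subset of a bounded set is bounded is then immediate from the definition of bounded, once one observes that bounded sets are either empty or asymptotically alike to a singleton, and any nonempty subset of an empty set situation does not occur.

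There is no real obstacle here; the argument is a one-line consequence of Proposition \ref{6setare} combined with the trivial observation that a singleton has a unique nonempty subset. The only point requiring minimal care is handling the empty subset case, which is dispatched by the convention that $\emptyset$ is declared bounded in the preceding definition.
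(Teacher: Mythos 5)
Your argument is correct and is exactly the paper's route: the paper proves this proposition by declaring it an immediate consequence of Proposition \ref{6setare}, and you have simply spelled out the (correct) details — taking $B_{1}$ to be the unique nonempty subset of the singleton $\{x\}$ and handling the empty set via the convention in the definition of boundedness. Nothing further is needed.
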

\begin{proof}
It is an immediate consequence of \ref{6setare}.
\end{proof}
\begin{example}
Suppose that $G$ is a group. For two subsets $A$ and $B$ of $G$ define $A\lambda_{l}B$ if there exists a finite subset $K$ of $G$ such that $A\subset BK$ and $B\subseteq AK$. We call $\lambda_{l}$ the \emph{left AS.R.} on $G$. Similarly one can define the \emph{right AS.R.} on $G$. In both cases a subset $D$ of $G$ is bounded if and only if it is finite. If $G$ is an Abelian group then $\lambda_{r}$ and $\lambda_{l}$ obviously coincide. However they are different in general case (\cite{Alt}).
\end{example}
\begin{example}\label{lll}
Suppose that $A$ and $B$ are two subsets of the real line $\mathbb{R}$. Define $A\lambda B$ if there exists $r>0$ such that $A\subseteq \bigcup_{b\in B}(b-r,+\infty)$ and $B\subseteq \bigcup_{a\in A}(a-r,+\infty)$. It is straightforward to show that $\lambda$ is an equivalence relation on the family of all subsets of $\mathbb{R}$ and it satisfies (i) of \ref{2setare}. Now suppose that $A\lambda (B_{1}\bigcup B_{2})$ and $B_{1},B_{2}\neq \emptyset$. So there is $r>0$ such that we have $A\subseteq \bigcup_{b\in B_{1}\bigcup B_{2}}(b-r,+\infty)$ and $B_{1}\bigcup B_{2}\subseteq \bigcup_{a\in A}(a-r,+\infty)$. Let $A_{1}^{\prime}=(\bigcup_{b\in B_{1}}(b-r,+\infty))\bigcap A$. If $B_{1}\subseteq \bigcup_{a\in A_{1}^{\prime}}(a-r,\infty)$ so $A_{1}^{\prime}\lambda B_{1}$ and we can let $A_{1}=A_{1}^{\prime}$. Now assume that there is $b_{1}\in B_{1}$ such that $b_{1}\leq a-r$ for all $a\in A_{1}^{\prime}$. Since $A\lambda (B_{1}\bigcup B_{2})$ there is $a_{1}\in A$ such that $a_{1}-r<b_{1}$. Let $A_{1}=A_{1}^{\prime}\bigcup \{a_{1}\}$ and $r_{1}=\max\{\mid b_{1}-a_{1}\mid +1 ,r\}$. Since $a_{1}$ is not in $A_{1}^{\prime}$, $a_{1}\leq b-r<b$ for all $b\in B_{1}$. Thus $B_{1}\subseteq (a_{1}-r_{1},+\infty)$ which leads to $A_{1}\lambda B_{1}$. Similarly one can find $A_{2}\subseteq A$ such that $A_{2}\lambda B_{2}$ and $A=A_{1}\bigcup A_{2}$. Let $\lambda_{d}$ denotes the AS.R. associated to the standard metric on $\mathbb{R}$. It is easy to show that $A\lambda_{d} B$ yields $A\lambda B$, for all $A,B\subseteq \mathbb{R}$. A set $D\subseteq \mathbb{R}$ is bounded with respect to $\lambda$ if and only if $D\subseteq (a,+\infty)$ for some $a\in \mathbb{R}$. There is not any metric on $\mathbb{R}$ such that we have $A\lambda B$ if and only if $d_{H}(A,B)<\infty$ for all subsets $A$ and $B$ of $\mathbb{R}$. Suppose the contrary. For each $n\in \mathbb{N}$ the interval $(-\infty,-n)$ is unbounded. We choose $b_{n}<-n$ such that $d(-n,b_{n})>n$. But the sequences $(-n)_{n\in \mathbb{N}}$ and $(b_{n})_{n\in \mathbb{N}}$ satisfy the hypothesis of \ref{hhhh}, a contradiction.
\end{example}
\begin{definition}
Let $(X,\lambda_{1})$ and $(Y,\lambda_{2})$ be two AS.R. spaces. We call a map $f:X\rightarrow Y$ an \emph{AS.R. mapping} if\\
i) $f^{-1}(B)$ is bounded in $X$ for each bounded subset $B$ of $Y$. (Properness)\\
ii) $A\lambda_{1}B$ implies $f(A)\lambda_{2}f(B)$, for all subsets $A$ and $B$ of $X$.
\end{definition}
In fact \ref{5setare} says that for metric spaces $X$ and $Y$ a map $f:X\rightarrow Y$ is a coarse map if and only if it is an AS.R. mapping for the AS.R.s associated to their metrics.
\begin{definition}
Let $(Y,\lambda)$ be an AS.R. space and let $X$ be a set. We say that two maps $f:X\rightarrow Y$ and $g:X\rightarrow Y$ are \emph{close} if we have $f(A)\lambda g(A)$ for each subset $A$ of $X$.
\end{definition}
\begin{proposition}
Let $(Y,d)$ be a metric space and let $\lambda$ be the AS.R. associated to $d$. Two maps $f:X\rightarrow Y$ and $g:X\rightarrow Y$ are close if and only if there is some $k>0$ such that $d(f(x),g(x))<k$ for all $x\in X$.
\end{proposition}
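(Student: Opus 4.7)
The plan is to prove the two directions separately, with the forward direction being essentially immediate from the definition of the AS.R.\ associated to a metric, and the converse direction reduced to Lemma \ref{hhhh} via a contradiction argument.

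For the forward direction, I would assume there is $k>0$ with $d(f(x),g(x))<k$ for all $x\in X$. Given any subset $A\subseteq X$, each point $f(a)$ lies within distance $k$ of the point $g(a)\in g(A)$, so $f(A)\subseteq \textbf{B}(g(A),k)$; by the same argument $g(A)\subseteq \textbf{B}(f(A),k)$. Hence $d_H(f(A),g(A))\leq k<\infty$, which by Definition \ref{setare} means $f(A)\lambda g(A)$. Since $A$ was arbitrary, $f$ and $g$ are close.

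For the converse, I would argue by contradiction. Suppose $f$ and $g$ are close but no such uniform bound $k$ exists. Then for every $n\in\mathbb{N}$ one can select $x_n\in X$ with $d(f(x_n),g(x_n))>n$. Consider the sequences $(f(x_n))_{n\in\mathbb{N}}$ and $(g(x_n))_{n\in\mathbb{N}}$ in $Y$. For any $I\subseteq\mathbb{N}$, setting $A_I=\{x_i\mid i\in I\}$, closeness of $f$ and $g$ yields $f(A_I)\lambda g(A_I)$, i.e.\ $\{f(x_i)\mid i\in I\}\lambda\{g(x_i)\mid i\in I\}$. This is precisely the hypothesis of Lemma \ref{hhhh} applied to the sequences $(f(x_n))$ and $(g(x_n))$, so the lemma gives a uniform $k>0$ with $d(f(x_n),g(x_n))<k$ for every $n$. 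This contradicts $d(f(x_n),g(x_n))>n$ as soon as $n\geq k$, finishing the proof.

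The main (and in fact only) nontrivial step is the application of Lemma \ref{hhhh}; everything else amounts to rewriting the definitions of closeness and of the AS.R.\ associated to $d$. There is no real obstacle here, since Lemma \ref{hhhh} is the exact sequential characterization needed, and the family of subsets $A_I$ indexed by $I\subseteq\mathbb{N}$ provides exactly the hypothesis the lemma requires.
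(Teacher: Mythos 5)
Your proof is correct and follows essentially the same route as the paper: the direction from a uniform bound $k$ to closeness is immediate from the definition of $\lambda$, and the converse is obtained by contradiction, feeding the sequences $(f(x_n))$ and $(g(x_n))$ into Lemma \ref{hhhh} via the sets $A_I$. Your write-up is in fact slightly more explicit than the paper's, which leaves the easy direction and the verification of the hypothesis of Lemma \ref{hhhh} to the reader.
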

\begin{proof}
The proof of "only if" part is straightforward. Now suppose that $f$ and $g$ are close maps. Assume that on the contrary, for all $n\in \mathbb{N}$ there exists $x_{n}\in X$ such that $d(f(x_{n}),g(x_{n}))>n$. But the sequences $(f(x_{n}))_{n\in \mathbb{N}}$ and $(g(x_{n}))_{n\in \mathbb{N}}$ satisfy the hypothesis of \ref{hhhh}, a contradiction.
\end{proof}
\begin{definition}
Let $(X,\lambda_{1})$ and $(Y,\lambda_{2})$ be two AS.R. spaces. We call an AS.R. mapping $f:X\rightarrow Y$ an \emph{asymptotic equivalence} if there exists an AS.R. mapping $g:Y\rightarrow X$ such that $gof$ and $fog$ are close to the identity maps $i_{X}:X\rightarrow X$ and $i_{Y}:Y\rightarrow Y$ respectively. We say AS.R. spaces $(X,\lambda_{1})$ and $(Y,\lambda_{2})$ are \emph{asymptotically equivalent} if there exists an asymptotic equivalence $f:X\rightarrow Y$.
\end{definition}
\begin{proposition}
Let $(X,\lambda_{1})$ and $(Y,\lambda_{2})$ be two AS.R. spaces. Suppose that $f:X\rightarrow Y$ and $g:X\rightarrow Y$ are two close maps. If $f$ is an AS.R. mapping then so is $g$ and if $f$ is an asymptotic equivalence then so is $g$.
\end{proposition}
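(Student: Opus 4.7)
The plan is to handle the two assertions in sequence and exploit three simple ingredients throughout: transitivity of $\lambda_2$, the fact that every subset of a bounded set is bounded (Proposition \ref{4setare}), and the definition of closeness tested on an arbitrary subset $A$.

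First I would show that being an AS.R.\ mapping is preserved under closeness. For condition (ii), given $A\lambda_1 B$, applying the hypothesis on $f$ yields $f(A)\lambda_2 f(B)$, while closeness gives $g(A)\lambda_2 f(A)$ and $f(B)\lambda_2 g(B)$; transitivity of the equivalence relation $\lambda_2$ then produces $g(A)\lambda_2 g(B)$. For properness (condition (i)), let $B\subseteq Y$ be bounded and set $A=g^{-1}(B)$. Then $g(A)\subseteq B$ is bounded by Proposition \ref{4setare}. By closeness, $f(A)\lambda_2 g(A)$, so picking $y\in Y$ with $g(A)\lambda_2 y$ and using transitivity gives $f(A)\lambda_2 y$, i.e.\ $f(A)$ is bounded. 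Properness of $f$ applied to the bounded set $f(A)$ then shows $f^{-1}(f(A))$ is bounded, and since $A\subseteq f^{-1}(f(A))$, Proposition \ref{4setare} forces $A$ to be bounded.

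For the second assertion, let $h\colon Y\to X$ be an AS.R.\ mapping with $h\circ f$ close to $\id_X$ and $f\circ h$ close to $\id_Y$; I would show that the same $h$ works as a quasi-inverse for $g$. For any $B\subseteq Y$ we have $f(h(B))\lambda_2 B$ by hypothesis, and $g(h(B))\lambda_2 f(h(B))$ by closeness of $f$ and $g$ tested on $h(B)$; transitivity gives $g(h(B))\lambda_2 B$, so $g\circ h$ is close to $\id_Y$. Symmetrically, for $A\subseteq X$ the closeness of $f,g$ gives $f(A)\lambda_2 g(A)$, and since $h$ is an AS.R.\ mapping this yields $h(f(A))\lambda_1 h(g(A))$; combining with $h(f(A))\lambda_1 A$ and using transitivity of $\lambda_1$ shows $h(g(A))\lambda_1 A$, so $h\circ g$ is close to $\id_X$. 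Combined with the first part (which certifies that $g$ itself is an AS.R.\ mapping), this exhibits $g$ as an asymptotic equivalence.

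There is essentially no main obstacle beyond careful bookkeeping: the only mildly delicate step is the properness verification, where one must remember to extract a bounded set from $g(A)$, transport it through closeness to $f(A)$, and then pull back along the proper map $f$ before invoking Proposition \ref{4setare} to descend to $A$.
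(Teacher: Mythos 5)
Your proof is correct and follows essentially the same route as the paper: the properness verification via $g^{-1}(B)\subseteq f^{-1}(f(g^{-1}(B)))$ together with Proposition \ref{4setare} is exactly the paper's argument, and the remaining parts (preservation of condition (ii) and of the quasi-inverse) are handled by the same transitivity considerations the paper dismisses as straightforward.
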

\begin{proof}
We are going to prove that if $f$ is a proper map then so is $g$. Other parts of the corollary are straightforward results of the property that $\lambda_{1}$ and $\lambda_{2}$ are equivalence relations on the family of all subsets of $X$ and $Y$.\\
Let $D\subseteq Y$ be a bounded set. We have $f(g^{-1}(D))\lambda_{2}g(g^{-1}(D))$ so $f(g^{-1}(D))$ is bounded. Thus $f^{-1}(f(g^{-1}(D)))$ is bounded and \ref{4setare} leads to $g^{-1}(D)$ is bounded.
\end{proof}
\begin{definition}
Let $(X,\lambda)$ be an AS.R. space and let $Y$ be a nonempty subset of $X$. For all two subsets $A$ and $B$ of $Y$, define $A\lambda_{Y}B$ if $A\lambda B$. The pair $(Y,\lambda_{Y})$ is an AS.R. space and we call $\lambda_{Y}$ the \emph{subspace} AS.R. induced by $\lambda$ on $Y$.
\end{definition}
\begin{lemma}
Let $(X,\lambda)$ and $(Y,\lambda^{\prime})$ be two AS.R. spaces. Suppose that $f:X\rightarrow Y$ is an asymptotic equivalence and $\emptyset \neq C\subseteq X$. Then $f\mid_{C}:(C,\lambda_{C})\rightarrow (f(C),\lambda^{\prime}_{f(C)})$ is an asymptotic equivalence too.
\end{lemma}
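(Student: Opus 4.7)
The plan is to construct an inverse up to closeness for $f\mid_C$ by taking a set-theoretic section of $f$ that lands in $C$. Let $g:Y\to X$ be the AS.R. mapping witnessing that $f$ is an asymptotic equivalence, so $g\circ f$ and $f\circ g$ are close to the identities. The obvious candidate for an inverse of $f\mid_C$ would be $g\mid_{f(C)}$, but there is no reason its image lies in $C$; that is the main obstacle. The workaround is to choose, for each $y\in f(C)$, an element $h(y)\in f^{-1}(y)\cap C$ (nonempty by definition of $f(C)$). Then $h:f(C)\to C$ satisfies $f\circ h=\mathrm{id}_{f(C)}$ by construction, so one of the two closeness conditions is trivial.

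Next I would check that $h$ is an AS.R.\ mapping. For properness: if $D\subseteq C$ is bounded, then $D$ is bounded in $X$, so $f(D)$ is bounded in $Y$ (an AS.R.\ mapping sends bounded sets to bounded sets, since $D\lambda x$ implies $f(D)\lambda' f(x)$). Because $h^{-1}(D)\subseteq f(D)\cap f(C)$, and the latter is a bounded subset of $f(C)$ (using \ref{4setare} and the fact that a bounded subset of $Y$ contained in $f(C)$ is bounded in $f(C)$ by choosing a witness point inside the set itself and applying transitivity), Proposition \ref{4setare} makes $h^{-1}(D)$ bounded in $f(C)$.

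The key observation for preserving $\lambda$ is that $h$ and $g$ agree up to closeness on $f(C)$. Indeed, for any $A\subseteq f(C)$,
\[
g(A)=g(f(h(A)))=(g\circ f)(h(A))\;\lambda\;h(A),
\]
since $g\circ f$ is close to $\mathrm{id}_X$. Now if $A,B\subseteq f(C)$ satisfy $A\,\lambda'_{f(C)}\,B$, then $g(A)\lambda g(B)$ because $g$ is an AS.R.\ mapping; combined with $g(A)\lambda h(A)$ and $g(B)\lambda h(B)$, transitivity of $\lambda$ gives $h(A)\lambda h(B)$, that is $h(A)\,\lambda_C\,h(B)$.

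Finally, to check that $h\circ f\mid_C$ is close to $\mathrm{id}_C$, take any $A\subseteq C$ and note that by the previous displayed relation (applied to $f(A)\subseteq f(C)$) and the closeness of $g\circ f$ to $\mathrm{id}_X$,
\[
h(f(A))\;\lambda\;g(f(A))\;\lambda\;A,
\]
so $h(f(A))\,\lambda_C\,A$. Together with $f\circ h=\mathrm{id}_{f(C)}$, this shows $h$ is an AS.R.\ inverse to $f\mid_C$ up to closeness, completing the proof. The only delicate points are the existence of the section (axiom of choice) and the transfer of boundedness between $Y$ and $f(C)$, both of which are quick consequences of the material already in place.
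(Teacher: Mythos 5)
Your proof is correct and follows essentially the same route as the paper's: both construct a set-theoretic section $q$ (your $h$) of $f$ landing in $C$, observe that $g(A)=g\circ f(q(A))\lambda q(A)$ to transfer the AS.R. property from $g$ to $q$, and deduce the closeness conditions from $f\circ q=\id$ and the closeness of $g\circ f$ to $\id_X$. The only (harmless) variation is in the properness step, where you bound $f(D)$ directly using that $f$ is an AS.R. mapping, whereas the paper sandwiches $q^{-1}(D)\subseteq f(D)\subseteq g^{-1}(g\circ f(D))$ and uses properness of $g$.
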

\begin{proof}
Let $g:Y\rightarrow X$ be an AS.R. mapping such that $g\circ f$ and $f\circ g$  are close maps to identity map $i_{X}:X\rightarrow X$ and $i_{Y}:Y\rightarrow Y$ respectively. Let $q:f(C)\rightarrow C$ be a map such that $f\circ q(a)=a$ for each $a\in f(C)$. Suppose that $D\subseteq C$ is bounded. Since $g\circ f(D)\lambda D$, $g\circ f(D)$ is a bounded subset of $X$. We have $q^{-1}(D)\subseteq f(D)\subseteq g^{-1}(g\circ f(D))$, \ref{4setare} shows $q^{-1}(D)$ is bounded. Assume that $A,B\subseteq f(C)$ and $A\lambda^{\prime}_{f(C)}B$. We have $g\circ f (q(A))\lambda q(A)$ and since $f(q(A))=A$, $q(A)\lambda g(A)$. Similarly $q(B)\lambda g(B)$ and it leads to $q(A)\lambda_{C} q(B)$. Therefore $q$ is an AS.R. mapping. Now let $A\subseteq C$. We have $f(q\circ f(A))=f(A)$ so $g(f(q\circ f(A)))=gof(A)\lambda A$. Also we have $g\circ f(q\circ f(A))\lambda q\circ f(A)$ and it leads to $q\circ f(A)\lambda_{C} A$. Therefore $f\mid_{C}:C\rightarrow f(C)$ is an asymptotic equivalence.
\end{proof}
\begin{definition}
We call an AS.R. space $(X,\lambda)$ \emph{asymptotically connected} if we have $x\lambda y$ for all $x,y\in X$.
\end{definition}
It is immediate that the AS.R. associated to a connected coarse structure is asymptotically connected.
\begin{proposition}
An AS.R. space $(X,\lambda)$ is asymptotically connected if and only if for each nonempty subsets $A$ and $B$ of $X$, $A\Delta B$ is finite yields $A\lambda B$.
\end{proposition}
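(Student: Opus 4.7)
The plan is to handle the two directions separately, with the nontrivial content concentrated in the forward implication.

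For the easier "if" direction, I would simply take arbitrary $x,y\in X$ and apply the hypothesis to the singletons $A=\{x\}$ and $B=\{y\}$. Since $A\Delta B\subseteq\{x,y\}$ is finite, the hypothesis gives $\{x\}\lambda\{y\}$, i.e.\ $x\lambda y$, which is exactly asymptotic connectedness.

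For the "only if" direction, the plan is to first prove the auxiliary claim: \emph{for every nonempty $A\subseteq X$ and every $x\in X$, one has $A\lambda(A\cup\{x\})$}. I would fix any $a\in A$; by asymptotic connectedness $\{a\}\lambda\{x\}$, and by reflexivity $A\lambda A$, so property (i) of Proposition \ref{2setare} gives
\[
A=A\cup\{a\}\ \lambda\ A\cup\{x\}.
\]
A routine induction on $|F|$ then yields $A\lambda(A\cup F)$ for every finite $F\subseteq X$, provided $A\neq\emptyset$.

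To finish, given nonempty $A,B\subseteq X$ with $A\Delta B$ finite, I would apply the auxiliary claim to $A$ with $F=B\setminus A$ (finite) and to $B$ with $F=A\setminus B$ (finite) to get
\[
A\ \lambda\ A\cup B\qquad\text{and}\qquad B\ \lambda\ A\cup B.
\]
Transitivity of the equivalence relation $\lambda$ then yields $A\lambda B$, as required.

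The only subtle point (and the place where the hypothesis $A\neq\emptyset$ and $B\neq\emptyset$ enters) is being able to pick a basepoint $a\in A$ in the auxiliary claim; without that, asymptotic connectedness gives no information to compare $\{x\}$ with $A$. Beyond this, the argument is routine manipulation of property (i) and the equivalence-relation axioms, so I do not anticipate a genuine obstacle.
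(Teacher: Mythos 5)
Your proof is correct. It follows the same basic strategy as the paper's (singletons for the ``if'' direction; property (i) of Proposition \ref{2setare} plus connectedness and transitivity for the ``only if'' direction), but the decomposition is different. The paper writes $A=(A\setminus B)\cup(A\cap B)$ and $B=(B\setminus A)\cup(A\cap B)$, first establishes $(A\setminus B)\lambda(B\setminus A)$ for the two finite difference sets, and then unions in the common part; as written this tacitly assumes both $A\setminus B$ and $B\setminus A$ are nonempty, since $\emptyset\lambda C$ generally fails for nonempty $C$. You instead pass through $A\cup B$, using the one-point absorption lemma $A\lambda(A\cup\{x\})$ (for nonempty $A$) and induction to get $A\lambda(A\cup B)$ and $B\lambda(A\cup B)$, then conclude by transitivity. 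Your route costs an induction but handles the degenerate cases $A\subseteq B$ or $B\subseteq A$ uniformly, which is a genuine (if small) advantage in rigor over the argument as printed; the two proofs are otherwise interchangeable.
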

\begin{proof}
The "if" part is trivial. Assume that $A\setminus B=\{x_{1},...,x_{n}\}$ and $B\setminus A=\{y_{1},...,y_{m}\}$. By using (i) of \ref{2setare} and asymptotically connectedness of $\lambda$ we can conclude $(A\setminus B)\lambda (B\setminus A)$. By (i) of \ref{2setare} we have $((A\setminus B)\bigcup (A\bigcap B))\lambda ((B\setminus A)\bigcup (A\bigcap B))$. Thus $A\lambda B$.
\end{proof}
\section{Coarse structures and asymptotic resemblance relations}
In \ref{coarse} we stated that every coarse structure $\mathcal{E}$ on a set $X$ induces an AS.R. on $X$. We denoted this AS.R. by $\lambda_{\mathcal{E}}$. The following example shows that two different coarse structures may induce a same AS.R. relation.
\begin{example}
Let $X=\mathbb{N}$. Assume that $\mathcal{E}_{1}$ and $\mathcal{E}_{2}$ denote two families of subsets of $X\times X$ such that:\\
i) $E\in \mathcal{E}_{1}$ if and only if $E(A)$  and $E^{-1}(A)$ are finite for all finite $A\subseteq \mathbb{N}$.\\
ii) $E\in \mathcal{E}_{2}$ if and only if there exists $n_{E}\in \mathbb{N}$ such that $E(x)$ and $E^{-1}(x)$ have at most $n_{E}$ members, for all $x\in X$.\\
Both families $\mathcal{E}_{1}$ and $\mathcal{E}_{2}$ are coarse structures on $X$ (examples 2.8 and 2.44 of \cite{Roe}). It is immediate that $\mathcal{E}_{2}$ is a proper subset of $\mathcal{E}_{1}$. For two subsets $A,B$ of $X$ we claim that $A\lambda_{\mathcal{E}_{2}}B$ if and only if $A$ and $B$ are both finite or $A$ and $B$ are both infinite. It is straightforward to show that if $A\lambda_{\mathcal{E}_{2}}B$ and $A$ is finite then so is $B$ and if $A$ and $B$ are both finite then $A\lambda_{\mathcal{E}_{2}}B$. Suppose that $A$ and $B$ are both infinite. Let $A=\{a_{n}\mid n\in \mathbb{N}\}$ and $B=\{b_{n}\mid n\in \mathbb{N}\}$ and assume that $a_{n}<a_{n+1}$ and $b_{n}<b_{n+1}$ for all $n\in \mathbb{N}$. Let $E=\{(a_{n},b_{n})\mid n\in \mathbb{N}\}\bigcup \{(b_{n},a_{n})\mid n\in \mathbb{N}\}$. Clearly $E\in \mathcal{E}_{2}$ and $n_{E}=2$. We have $A\subseteq E(B)$ and $B\subseteq E(A)$ so $A\lambda_{\mathcal{E}_{2}}B$. Since $\mathcal{E}_{2}\subseteq \mathcal{E}_{1}$ one can easily shows that $A\lambda_{\mathcal{E}_{1}}B$ if and only if $A$ and $B$ are both finite or $A$ and $B$ are both infinite. Thus $\lambda_{\mathcal{E}_{1}}=\lambda_{\mathcal{E}_{2}}$.
\end{example}
Let $\lambda$ be an AS.R. on a set $X$. We denote the family of all coarse structures that induce $\lambda$ by $\mathcal{E}(\lambda)$. Let us recall that for two coarse structures $\mathcal{E}_{1}$ and $\mathcal{E}_{2}$ on a set $X$, $\mathcal{E}_{2}$ is called to be \emph{coarser} than $\mathcal{E}_{1}$ if $\mathcal{E}_{1}\subseteq \mathcal{E}_{2}$ (section 2.1 of \cite{Roe}).

\begin{proposition}
Let $\lambda$ be an AS.R. on a set $X$. If $\mathcal{E}(\lambda)\neq \emptyset$ then there is a coarse structure $\mathcal{E}_{\lambda}\in \mathcal{E}(\lambda)$ such that $\mathcal{E}_{\lambda}$ is coarser than each member of $\mathcal{E}(\lambda)$.
\end{proposition}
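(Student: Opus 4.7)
The plan is to construct the coarsest coarse structure in $\mathcal{E}(\lambda)$ explicitly by a saturation procedure, then verify that it absorbs every member of $\mathcal{E}(\lambda)$. For $E\subseteq X\times X$ and $A\subseteq X$, write $E[A]=A\cup E(A)\cup E^{-1}(A)$, and define
$$\mathcal{E}_{\lambda}=\{E\subseteq X\times X : A\lambda E[A]\text{ for every }A\subseteq X\}.$$
The intuition is that an entourage is "compatible with $\lambda$" precisely when enlarging any subset by it yields a $\lambda$-equivalent set.

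First I would verify that $\mathcal{E}_{\lambda}$ is a coarse structure. The conditions $\Delta\in\mathcal{E}_{\lambda}$ and $E\in\mathcal{E}_{\lambda}\Leftrightarrow E^{-1}\in\mathcal{E}_{\lambda}$ are automatic, since $\Delta[A]=A$ and $E^{-1}[A]=E[A]$. Closure under subsets follows from \ref{sss}: if $F\subseteq E\in\mathcal{E}_{\lambda}$ then $A\subseteq F[A]\subseteq E[A]$ and $A\lambda E[A]$ force $A\lambda F[A]$. Closure under finite unions uses $(E\cup F)[A]=E[A]\cup F[A]$ together with property (i) of \ref{2setare} applied to $A\lambda E[A]$ and $A\lambda F[A]$.

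Closure under composition is the main technical step, and where I expect the main obstacle, because $(E\circ F)[A]=A\cup E(F(A))\cup F^{-1}(E^{-1}(A))$ mixes contributions controlled by $E$ and by $F$. Given $E,F\in\mathcal{E}_{\lambda}$, I would iterate the saturation: set $B=F[A]$, $C=E[B]$, $D=F[C]$, so that transitivity of $\lambda$ yields $A\lambda D$. Then $F(A)\subseteq B\subseteq C$ forces $E(F(A))\subseteq E(B)\subseteq C$; while $E^{-1}(A)\subseteq E^{-1}(B)\subseteq C$ forces $F^{-1}(E^{-1}(A))\subseteq F^{-1}(C)\subseteq F[C]=D$. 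Combining, $A\subseteq (E\circ F)[A]\subseteq D$, and \ref{sss} gives $A\lambda (E\circ F)[A]$, as required.

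It remains to see $\mathcal{E}_{\lambda}\in\mathcal{E}(\lambda)$ and that every $\mathcal{E}\in\mathcal{E}(\lambda)$ satisfies $\mathcal{E}\subseteq\mathcal{E}_{\lambda}$. For any $E\in\mathcal{E}$, the enlarged entourage $F=E\cup E^{-1}\cup\Delta$ also lies in $\mathcal{E}$ and satisfies $F(A)=E[A]$ while $\Delta\subseteq F$ gives $A\subseteq F(E[A])$; hence $A\lambda_{\mathcal{E}}E[A]$, and since $\lambda_{\mathcal{E}}=\lambda$ we conclude $E\in\mathcal{E}_{\lambda}$. This shows $\mathcal{E}_{\lambda}$ is coarser than every member of $\mathcal{E}(\lambda)$. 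Fixing any $\mathcal{E}_{0}\in\mathcal{E}(\lambda)$ then yields $\lambda=\lambda_{\mathcal{E}_{0}}\subseteq\lambda_{\mathcal{E}_{\lambda}}$. For the reverse inclusion, if $A\lambda_{\mathcal{E}_{\lambda}}B$ is witnessed by $E\in\mathcal{E}_{\lambda}$ with $A\subseteq E(B)$ and $B\subseteq E(A)$, then $B\subseteq A\cup B\subseteq E[B]$ and $A\subseteq A\cup B\subseteq E[A]$, so applying \ref{sss} twice gives $A\lambda A\cup B\lambda B$, and hence $A\lambda B$, completing the proof that $\mathcal{E}_{\lambda}\in\mathcal{E}(\lambda)$.
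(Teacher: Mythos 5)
Your proof is correct, and while it follows the same overall strategy as the paper (explicitly construct the candidate maximum, verify the coarse-structure axioms, show it induces $\lambda$, and show it contains every member of $\mathcal{E}(\lambda)$), your defining condition is genuinely different. The paper declares $E\in\mathcal{E}_{\lambda}$ when $\pi_{1}(F)\lambda\pi_{2}(F)$ for every sub-entourage $F\subseteq E$, a condition that is hereditary by fiat (so closure under subsets is free) and makes the inclusion $\mathcal{E}\subseteq\mathcal{E}_{\lambda}$ for $\mathcal{E}\in\mathcal{E}(\lambda)$ nearly immediate; the price is a fiddlier composition step (the paper builds auxiliary sub-entourages $O_{1}\subseteq F$, $O_{2}\subseteq E$ to chain the projections) and a final verification that $\lambda_{\mathcal{E}_{\lambda}}\subseteq\lambda$ which has to invoke property (ii) of \ref{2setare} via \ref{6setare} to absorb the points of $A$ not hit by the chosen sub-entourages. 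Your condition $A\lambda E[A]$ for all $A$ trades these difficulties the other way: subsets, unions and symmetry follow in one line from \ref{sss} and \ref{2setare}(i), composition becomes a clean three-step saturation $A\subseteq F[A]\subseteq E[F[A]]\subseteq F[E[F[A]]]$ sandwiching $(E\circ F)[A]$, and the verification that $\mathcal{E}_{\lambda}$ still induces $\lambda$ reduces to sandwiching $A\cup B$ between $A$ and $E[A]$ (and between $B$ and $E[B]$), using only \ref{sss}. The two constructions necessarily produce the same coarse structure, since a maximum of $\mathcal{E}(\lambda)$ under inclusion is unique; your version arguably isolates the single monotonicity lemma \ref{sss} as the engine of the whole argument and avoids the projection bookkeeping, at the cost of one extra observation (that $F=E\cup E^{-1}\cup\Delta$ witnesses $A\lambda_{\mathcal{E}}E[A]$) when showing $\mathcal{E}\subseteq\mathcal{E}_{\lambda}$.
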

\begin{proof}
Let $\mathcal{E}_{\lambda}$ be the family of all $E\subseteq X\times X$ such that $\pi_{1}(F)\lambda \pi_{2}(F)$ for all $F\subseteq E$, where $\pi_{1}$ and $\pi_{2}$ denote projection maps onto first and second factors, respectively. Since $\lambda$ is an equivalence relation $\Delta \in \mathcal{E}_{\lambda}$ and $E^{-1}\in \mathcal{E}_{\lambda}$ for all $E\in \mathcal{E}_{\lambda}$. By property i) of \ref{2setare} it one can easily shows that $E\bigcup F\in \mathcal{E}_{\lambda}$ for all $E,F\in \mathcal{E}_{\lambda}$. Let $E,F\in \mathcal{E}_{\lambda}$ and suppose that $H\subseteq E\circ F$. Set $O_{1}=\{(x,y)\in X\times X\mid x\in \pi_{1}(H), y\in F(x)\bigcap \pi_{1}(E)\}$ and $O_{2}=\{(x,y)\in X\times X\mid x\in \pi_{2}(O_{1}), y\in E(x)\}$. We have $O_{1}\subseteq F$ and $O_{2}\subseteq E$ so $\pi_{1}(H)=\pi_{O_{1}}\lambda \pi_{2}(O_{1})$ and $\pi_{2}(O_{1})=\pi_{1}(O_{2})\lambda \pi_{2}(O_{2})=\pi_{2}(H)$. So $\pi_{1}(H)\lambda \pi_{2}(H)$ which leads to $E\circ F\in \mathcal{E}_{\lambda}$. Therefore $\mathcal{E}_{\lambda}$ is a coarse structure on $X$. Suppose that $\mathcal{E}\in \mathcal{E}(\lambda)$. It is straightforward by the definition to show that if $E\in \mathcal{E}$ and $F\subseteq E$ then $\pi_{1}(F)\lambda\pi_{2}(F)$. So $\mathcal{E}\subseteq \mathcal{E}_{\lambda}$. Thus $\mathcal{E}_{\lambda}$ is coarser than each member of $\mathcal{E}(\lambda)$. It remains to show $\mathcal{E}_{\lambda}\in \mathcal{E}(\lambda)$. Suppose that $A,B\subseteq X$ and $A\subseteq E(B)$ and $B\subseteq E(A)$, for some $E\in \mathcal{E}_{\lambda}$. Let $F_{1}=\{(a,b)\in E\mid a\in A, b\in B\}$ and $F_{2}=\{(b,a)\in E\mid a\in A, b\in B\}$. So $A=\pi_{1}(F_{1})\lambda \pi_{2}(F_{1})$ and $B=\pi_{1}(F_{2})\lambda \pi_{2}(F_{2})$. We have $A=(\pi_{2}(F_{2})\bigcup (A\setminus \pi_{2}(F_{2})))\lambda (B\bigcup A\setminus \pi_{2}(F_{2}))$. Sine $A$ is asymptotically alike to $\pi_{2}(F_{1})\subseteq B$, there is a subset $L$ of $B$ such that $(A\setminus \pi_{2}(F_{2}))\lambda L$, by \ref{6setare}. Therefore $A\lambda (B\bigcup L)=B$. Since $\mathcal{E}(\lambda)\neq \emptyset$ and $\mathcal{E}_{\lambda}$ is greater than each member of $\mathcal{E}(\lambda)$ it is straightforward to show that $A\lambda B$ implies there is $E\in \mathcal{E}_{\lambda}$ such that $A\subseteq E(B)$ and $B\subseteq E(A)$, for all $A,B\subseteq X$.
\end{proof}
In fact asymptotic resemblance relations on a set $X$ defines an equivalence relation on the family of all coarse structures on $X$. Two coarse structures on $X$ are equivalent if they induce the same asymptotic resemblance relation. The previous proposition shows that these equivalence classes have a biggest member. One can compare this with similar arguments about the relation between uniform structures and proximity in section 12 of \cite{Nai}.

\section{Asymptotic compactification}
\begin{definition}
Let $(X,\mathcal{T})$ be a topological space and let $\lambda$ be an AS.R. on $X$. We say that an open subset $U$ of $X$ is an \emph{asymptotic neighbourhood} of $A\subseteq X$ if $A\subseteq U$ and $A\lambda U$. We call $\lambda$ a \emph{compatible AS.R. with $\mathcal{T}$} if\\
i) Each subset of $X$ has an asymptotic neighbourhood.\\
ii) $A \lambda \bar{A}$ for all $A\subseteq X$.
\end{definition}

\begin{proposition}
Let $(X,\mathcal{T})$ be a topological space and let $\mathcal{E}$ be a coarse structure compatible with $\mathcal{T}$. Then the AS.R. associated to $\mathcal{E}$ is compatible with $\mathcal{T}$ too.
\end{proposition}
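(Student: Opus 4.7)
The plan is to unpack both defining axioms of a compatible AS.R.\ directly from the existence of a single open entourage containing the diagonal, which is exactly what compatibility of $\mathcal{E}$ with $\mathcal{T}$ provides (the equivalent reformulation quoted from \cite{Wr} in the introduction). So fix once and for all an open entourage $E\in\mathcal{E}$ with $\Delta\subseteq E$; everything will be witnessed by $E$.

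The technical lemma I would record first is that for every $x\in X$ the section $E(x)=\{y\in X\mid(x,y)\in E\}$ is open in $X$, and consequently $E(A)=\bigcup_{a\in A}E(a)$ is open for every $A\subseteq X$. This follows because $E$ is open in the product topology: for each $y\in E(x)$ one chooses a basic open rectangle $U\times V\subseteq E$ around $(x,y)$, and then $\{x\}\times V\subseteq E$, so $V\subseteq E(x)$. The same reasoning applied to $E^{-1}$ (open because the swap is a homeomorphism of $X\times X$) shows that $E^{-1}(x)$ is open and is, moreover, a neighbourhood of $x$ since $\Delta\subseteq E\subseteq E\cup E^{-1}$.

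With that in hand, axiom (i) is immediate: put $U=E(A)$. This $U$ is open by the observation above, contains $A$ because $\Delta\subseteq E$, and the pair $(A,U)$ is witnessed to be $\lambda_{\mathcal{E}}$-related by $E$ itself, since $A\subseteq U\subseteq E(U)$ and $U=E(A)\subseteq E(A)$. For axiom (ii), let $x\in\bar A$. Because $E^{-1}(x)$ is an open neighbourhood of $x$, it must meet $A$; picking $a\in A\cap E^{-1}(x)$ gives $(a,x)\in E$, hence $x\in E(a)\subseteq E(A)$. Therefore $\bar A\subseteq E(A)$, while $A\subseteq\bar A\subseteq E(\bar A)$ is trivial from $\Delta\subseteq E$, so $A\lambda_{\mathcal{E}}\bar A$ is witnessed by the same $E$.

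The only step that is genuinely not bookkeeping is the point-set lemma that $E(x)$ and $E^{-1}(x)$ are open; everything else is a direct substitution into the definitions of $\lambda_{\mathcal{E}}$ and of an asymptotic neighbourhood. I do not anticipate any real obstacle, and in particular there is no need to pass to a symmetrization of $E$ or to invoke the composition/inverse closure of $\mathcal{E}$ beyond what is already guaranteed by $E\in\mathcal{E}$.
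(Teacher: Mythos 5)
Your proof is correct and follows essentially the same route as the paper: take an open entourage $E$ containing the diagonal, use $E(A)$ as the asymptotic neighbourhood, and show $\bar A\subseteq E(A)$ by intersecting a suitable open slice with $A$. The only cosmetic difference is that the paper first replaces $E$ by the symmetric entourage $E\cup E^{-1}$ so that $E(a)$ itself is the neighbourhood of $a\in\bar A$, whereas you work with $E^{-1}(a)$ directly; both are fine.
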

\begin{proof}
Assume that $E$ is a symmetric open entourage containing the diagonal. For $A\subseteq X$, $E(A)$ is an asymptotic neighbourhood of $A$. Let $a\in \bar{A}$. $E(a)$ is an open neighbourhood of $a$, so $E(a)\bigcap A\neq \emptyset$. Let $a^{\prime}\in E(a)\bigcap A$. Since $(a,a^{\prime})\in E^{-1}=E$, $a\in E(a^{\prime})\subseteq E(A)$. Thus $\bar{A}\subseteq E(A)$ and this leads to $A\lambda \bar{A}$.
\end{proof}
\begin{definition}
We call two subsets $A_{1}$ and $A_{2}$ of an AS.R. space $(X,\lambda)$ \emph{asymptotically disjoint} if for all unbounded subsets $L_{1}\subseteq A_{1}$ and $L_{2}\subseteq A_{2}$ we have $L_{1}\bar{\lambda} L_{2}$. We say that an AS.R. space $(X,\lambda)$ is \emph{asymptotically normal} if for asymptotically disjoint subsets $A_{1}$ and $A_{2}$ of $X$, there exist $X_{1}\subseteq X$ and $X_{2}\subseteq X$ such that $X=X_{1}\bigcup X_{2}$ and $A_{i}$ and $X_{i}$ are asymptotically disjoint for $i\in \{1,2\}$.
\end{definition}
Let $B$ a bounded subset of an AS.R. space $(X,\lambda)$. Then $B$ is asymptotically disjoint from all $A\subseteq X$.\\
In \cite{Ind} two subsets $A$ and $B$ of a metric space $(X,d)$ are called asymptotically disjoint if for some $x_{0}\in X$, $\lim_{r\rightarrow \infty} d(A\setminus\textbf{B}(x_{0},r),B\setminus\textbf{B}(x_{0},r))=\infty$. The following proposition shows that this definition is equivalent to our definition of asymptotical disjointness on metric spaces.
\begin{proposition}
Let $(X,d)$ be a metric space and let $\lambda$ be the associated AS.R. to $d$. Two unbounded subsets $A$ and $B$ of $X$ are asymptotically disjoint if and only if for some $x_0$ in $X$, $\lim_{r\rightarrow \infty}d(A\setminus\textbf{B}(x_{0},r),B\setminus\textbf{B}(x_{0},r))=\infty$.
\end{proposition}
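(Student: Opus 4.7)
The plan is to prove both directions by contraposition, using the definitions unpacked: $L_1 \lambda L_2$ in a metric space means $d_H(L_1, L_2) < \infty$, and a subset is bounded in this AS.R.\ iff it has finite diameter. A useful preliminary observation is that for fixed $x_0$, the function $r \mapsto d(A \setminus \textbf{B}(x_0,r), B \setminus \textbf{B}(x_0,r))$ is nondecreasing in $r$, so its limit as $r \to \infty$ exists in $[0,\infty]$; failure of this limit to equal $\infty$ is therefore equivalent to the existence of a single finite bound $M$ valid for every $r$.

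For the ``only if'' direction I would contrapose: if the limit is some finite $M$, then for each $n \in \mathbb{N}$ I pick $a_n \in A \setminus \textbf{B}(x_0, n)$ and $b_n \in B \setminus \textbf{B}(x_0, n)$ with $d(a_n, b_n) < M+1$. Setting $L_1 = \{a_n : n \in \mathbb{N}\} \subseteq A$ and $L_2 = \{b_n : n \in \mathbb{N}\} \subseteq B$, both are unbounded since $d(a_n, x_0), d(b_n, x_0) \geq n$, while $d_H(L_1, L_2) \leq M+1$ gives $L_1 \lambda L_2$, contradicting asymptotic disjointness of $A$ and $B$.

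For the ``if'' direction, again by contraposition, assume $L_1 \subseteq A$ and $L_2 \subseteq B$ are unbounded with $d_H(L_1, L_2) = K < \infty$. For an arbitrary $r > 0$, unboundedness of $L_1$ yields $a \in L_1$ with $d(a, x_0) > r + K$, and the Hausdorff bound supplies some $b \in L_2$ with $d(a, b) \leq K$; the triangle inequality then forces $d(b, x_0) > r$, so the pair $a,b$ witnesses $d(A \setminus \textbf{B}(x_0, r), B \setminus \textbf{B}(x_0, r)) \leq K$. Since this holds for every $r$, the limit cannot be $\infty$, contradicting the hypothesis. I do not foresee a serious obstacle: everything reduces to extracting the correct witness points from unboundedness and applying the triangle inequality. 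The only mild subtlety is to guarantee that the sequences constructed in the forward step give genuinely unbounded metric sets, which is built into the construction via the explicit lower bounds $d(a_n, x_0) \geq n$ and $d(b_n, x_0) \geq n$.
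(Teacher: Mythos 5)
Your proposal is correct and follows essentially the same route as the paper: both directions by contradiction/contraposition, extracting witness points outside large balls and using the triangle inequality exactly as the authors do (your monotonicity remark and the replacement of the unattained Hausdorff infimum by $K+\epsilon$ are cosmetic refinements only).
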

\begin{proof}
Let $x_{0}\in X$ be a fixed point. Suppose that $A$ and $B$ are two asymptotically disjoint subsets of $X$. Assume that on the contrary, $\lim_{r\rightarrow \infty}d(A\setminus\textbf{B}(x_{0},r),B\setminus\textbf{B}(x_{0},r))\neq \infty$. Thus there exists $N\in \mathbb{N}$ such that for each $m\in \mathbb{N}$ we have $d(A\setminus\textbf{B}(x_{0},r_{m}),B\setminus\textbf{B}(x_{0},r_{m}))<N$ for some $r_{m}\geq m$. We choose $x_{m}\in A\setminus \textbf{B}(x_{0},r_{m})$ and $y_{m}\in B\setminus \textbf{B}(x_{0},r_{m})$ such that $d(x_{m},y_{m})<N$. Let $L_{1}=\{x_{m}\mid m\in \mathbb{N}\}$ and $L_{2}=\{y_{m}\mid m\in \mathbb{N}\}$. Thus $L_{1}\subseteq A$ and $L_{2}\subseteq B$ are two unbounded subsets and $L_{1}\lambda L_{2}$, a contradiction.\\
To prove the converse, let $A,B\subseteq X$ and suppose that $\lim_{r\rightarrow \infty}d(A\setminus\textbf{B}(x_{0},r),B\setminus\textbf{B}(x_{0},r))=\infty$. Assume that on the contrary, there are unbounded subsets $L_{1}\subseteq A$ and $L_{2}\subseteq B$ such that $d_{H}(L_{1},L_{2})<N$, for some $N\in \mathbb{N}$. Since $L_{1}$ is unbounded for each $n\in \mathbb{N}$, there exists $x_{n}\in L_{1}\setminus \textbf{B}(x_{0},n)$ such that $d(x_{n},b)>N$ for all $b\in \textbf{B}(x_{0},n)\bigcap L_{2}$. Thus there is $y_{n}\in L_{2}\setminus \textbf{B}(x_{0},n)$ such that $d(x_{n},y_{n})<N$. Then $x_{n}\in A\setminus \textbf{B}(x_{0},n)$ and $y_{n}\in B\setminus \textbf{B}(x_{0},n)$ for all $n\in \mathbb{N}$. Thus $\lim_{r\rightarrow \infty}d(A\setminus\textbf{B}(x_{0},r),B\setminus\textbf{B}(x_{0},r))\neq \infty$, a contradiction.
\end{proof}
\begin{proposition}
Let $(X,d)$ be a metric space and let $\lambda$ be the AS.R. associated to $d$. Then $(X,\lambda)$ is an asymptotically normal AS.R. space.
\end{proposition}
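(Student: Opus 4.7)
The plan is to reduce to the nontrivial case and then construct the cover of $X$ by comparing the distance functions to $A_1$ and $A_2$. First, if either $A_i$ is bounded, it contains no unbounded subsets and is therefore vacuously asymptotically disjoint from every subset of $X$; up to relabelling, the choice $X_1=X$, $X_2=\emptyset$ settles this case. So assume henceforth that both $A_1$ and $A_2$ are unbounded. The preceding proposition then supplies, for any fixed $x_0\in X$, the limit
$$\lim_{r\to\infty} d\bigl(A_1\setminus \textbf{B}(x_0,r),\; A_2\setminus \textbf{B}(x_0,r)\bigr)=\infty.$$

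Next I set
$$X_1=\{x\in X : d(x,A_1)\geq d(x,A_2)\}, \qquad X_2=\{x\in X : d(x,A_2)\geq d(x,A_1)\},$$
so that $X=X_1\cup X_2$. Note that $A_2\subseteq X_1$ and $A_1\subseteq X_2$. By symmetry it will suffice to show that $A_1$ and $X_1$ are asymptotically disjoint.

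For the main step I argue by contradiction. Suppose there exist unbounded $L_1\subseteq A_1$ and $L_2\subseteq X_1$ with $L_1\lambda L_2$, and choose $k>0$ with $d_H(L_1,L_2)\leq k$. Then every $y\in L_2$ lies within $k$ of some point of $L_1\subseteq A_1$, so $d(y,A_1)\leq k$, and the defining inequality of $X_1$ yields $d(y,A_2)\leq d(y,A_1)\leq k$. Pick $y_n\in L_2$ with $d(y_n,x_0)\to\infty$, and select $a_n\in A_1$ and $b_n\in A_2$ satisfying $d(y_n,a_n)\leq k$ and $d(y_n,b_n)\leq k$. Then $d(a_n,x_0), d(b_n,x_0)\to\infty$ while $d(a_n,b_n)\leq 2k$ for all $n$, contradicting the displayed limit.

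The main obstacle is choosing the right partition of $X$. A naive attempt such as $X_1=\{x:d(x,A_1)\geq n\}$ fails because the Hausdorff distance witnessing $L_1\lambda L_2$ can be any finite number, possibly much larger than $n$, so a fixed metric gap between $A_1$ and $X_1$ does not preclude finite Hausdorff distance between unbounded subsets. The comparison-based partition above is tuned precisely so that a point of $L_2$ close to $A_1$ is forced to be equally close to $A_2$, which is what activates the escape-to-infinity condition from the preceding proposition and produces the contradiction.
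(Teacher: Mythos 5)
Your proof is correct and follows essentially the same strategy as the paper's: both partition $X$ by comparing the distance functions to the two sets (the paper's annular pieces $B_{i}=\{x\mid d(x,B)\leq i+1\}\bigcap\{x\mid d(x,A)\geq i\}$ union up to essentially your set $\{x\mid d(x,A_{2})\leq d(x,A_{1})\}$), and both derive the contradiction from the observation that a point of $L_{2}$ lying within $k$ of $A_{1}$ is forced by membership in $X_{1}$ to lie within (roughly) $k$ of $A_{2}$ as well. The only cosmetic difference is in the finish: the paper exhibits the unbounded set $L_{3}=\textbf{B}(L_{2},n)\bigcap B$ asymptotically alike to $L_{1}$ directly from the definition, whereas you route through the $\lim_{r\rightarrow\infty}$ characterization of asymptotic disjointness from the preceding proposition (and your selection of $a_{n},b_{n}$ should allow distance $k+\varepsilon$ rather than exactly $k$, a harmless adjustment); both are valid.
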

\begin{proof}
Assume that $A$ and $B$ are asymptotically disjoint subsets of $X$. For $i\in \mathbb{N}\bigcup \{0\}$, let $A_{i}=\{x\mid d(x,A)\leq i+1\}\bigcap \{x\mid d(x,B)\geq i\}$ and $B_{i}=\{x\mid d(x,B)\leq i+1\}\bigcap \{x\mid d(x,A)\geq i\}$. Suppose that $X_{1}=\bigcup_{i=0}^{\infty}B_{i}$ and $X_{2}=\bigcup_{i=0}^{\infty}A_{i}$. For $x\in X$ assume that $i\leq d(x,A)\leq i+1$ and $j\leq d(x,B)\leq j+1$. If $i=j$ then $x\in A_{i}=B_{j}$. If $i<j$ then $i+1\leq j$ so $x\in A_{i}$. Thus $X=X_{1}\bigcup X_{2}$. We claim that $A$ and $X_{1}$ are asymptotically disjoint. Suppose that, on the contrary to our claim, there are unbounded subsets $L_{1}\subseteq A$ and $L_{2}\subseteq X_{1}$ such that $L_{1}\lambda L_{2}$ i.e. $L_{1}\subseteq \textbf{B}(L_{2},n)$ and $L_{2}\subseteq \textbf{B}(L_{1},n)$ for some $n\in \mathbb{N}$. Thus $L_{2}\subseteq \textbf{B}(A,n)$ and it leads to $L_{2}\subseteq \bigcup_{i=0}^{n-1}B_{i}$. So $L_{2}\subseteq \textbf{B}(B,n)$. Let $L_{3}=\textbf{B}(L_{2},n)\bigcap B$. We have $L_{3}\lambda L_{2}$ and it leads to $L_{3}\lambda L_{1}$, a contradiction. Therefore $A$ and $X_{1}$ are asymptotically disjoint. Similarly one can show that $B$ and $X_{2}$ are asymptotically disjoint.
\end{proof}
Let $X$ be a Hausdorff and locally compact topological space and let $\alpha X$ be a compactification of $X$. Let us recall that the topological coarse structure on $X$ associated to $\alpha X$ is the family of all $E\subseteq X\times X$ such that the closure of $E$ meets $(\alpha X\times \alpha X)\setminus (X\times X)$only in the diagonal (definition 2.28 of \cite{Roe}). It is known that topological coarse structures associated to a second countable compactifications are not metrizable (example 2.53 of \cite{Roe}). The following proposition shows that the class of all asymptotic normal AS.R. spaces is much bigger than the family of all metric spaces.
\begin{proposition}
Let $X$ be a Hausdorff and locally compact metric space and $\alpha X$ be a first countable compactification of $X$. Let $\mathcal{E}$ be the topological coarse structure associated to $\alpha X$ and $\lambda$ be the AS.R. associated to $\mathcal{E}$. Then $\lambda$ is asymptotically normal.
\end{proposition}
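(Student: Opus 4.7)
The plan is to characterize asymptotic disjointness in $(X,\lambda)$ via closures in the compactification $\alpha X$, and then to invoke normality of the compact Hausdorff space $\alpha X$ to construct the required decomposition. Write $\nu=\alpha X\setminus X$ for the boundary, and $\overline D$ for the closure of $D\subseteq X$ in $\alpha X$. As a preliminary observation, obtained by unpacking the definition of $\mathcal E$ on entourages of the form $\{x\}\times D\cup D\times\{x\}\cup\Delta$, for a non-empty $D\subseteq X$ one has that $D$ is bounded with respect to $\lambda$ if and only if $\overline D\cap\nu=\emptyset$, i.e.\ if and only if $D$ is relatively compact in $X$.

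The central claim is that $A,B\subseteq X$ are asymptotically disjoint in $(X,\lambda)$ if and only if $\overline A\cap\overline B\cap\nu=\emptyset$. For the contrapositive of the ``if'' direction, I take $\xi\in\overline A\cap\overline B\cap\nu$; first countability of $\alpha X$ at $\xi$ yields sequences $(x_n)\subseteq A$ and $(y_n)\subseteq B$ with $x_n\to\xi$ and $y_n\to\xi$. Setting $L_1=\{x_n\}$, $L_2=\{y_n\}$, both are unbounded by the preliminary, and the set $E=\{(x_n,y_n),(y_n,x_n):n\in\mathbb N\}\cup\Delta$ lies in $\mathcal E$: any limit point in $(\alpha X\times\alpha X)\setminus(X\times X)$ of the two countable pieces comes from a subsequence whose coordinates both converge to $\xi$, and so lies on the diagonal. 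This $E$ witnesses $L_1\lambda L_2$. Conversely, if $L_1\subseteq A$, $L_2\subseteq B$ are unbounded with $L_1\lambda L_2$ via some $E\in\mathcal E$, I pick $\xi\in\overline{L_1}\cap\nu$, extract by first countability a sequence $x_n\in L_1$ with $x_n\to\xi$, choose $y_n\in L_2$ with $(x_n,y_n)\in E$, and pass to a subsequence so that $y_n\to\eta\in\alpha X$. Then $(\xi,\eta)\in\overline E$ and $\xi\notin X$, so the definition of $\mathcal E$ forces $\eta=\xi$, giving $\xi\in\overline A\cap\overline B\cap\nu$.

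With the characterization in hand, the decomposition step is short. Assume $A_1,A_2$ are asymptotically disjoint, so $\overline{A_1}\cap\nu$ and $\overline{A_2}\cap\nu$ are disjoint closed subsets of the compact Hausdorff, hence normal, space $\alpha X$. Choose disjoint open sets $U_1,U_2$ in $\alpha X$ with $\overline{A_i}\cap\nu\subseteq U_i$, and set $X_i=X\setminus U_i$ for $i\in\{1,2\}$. Then $X_1\cup X_2=X\setminus(U_1\cap U_2)=X$, and $\overline{X_i}\subseteq\alpha X\setminus U_i$ yields $\overline{A_i}\cap\overline{X_i}\cap\nu\subseteq U_i\cap(\alpha X\setminus U_i)=\emptyset$, so by the characterization $A_i$ and $X_i$ are asymptotically disjoint for each $i$.

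The main obstacle is the closure characterization itself: both halves of its proof turn on extracting genuine sequences in $\alpha X$ (rather than merely nets), first to produce unbounded witness sets $L_1,L_2$ from a putative common boundary point, and then to verify that the countable entourage built from a pair of sequences satisfies the off-diagonal condition defining the topological coarse structure. This is precisely where the hypothesis that $\alpha X$ be first countable enters. Once the characterization is in place, the decomposition argument is purely topological.
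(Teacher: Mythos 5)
Your proposal is correct and follows essentially the same route as the paper: both proofs hinge on the characterization that $A$ and $B$ are asymptotically disjoint if and only if $\overline{A}\cap\overline{B}\cap(\alpha X\setminus X)=\emptyset$, established by exactly the same sequence-extraction arguments (using first countability both to produce the witnessing entourage $E=\{(x_n,y_n)\}\cup\{(y_n,x_n)\}$ and, in the converse, to force $y_n\to\xi$ from the off-diagonal condition on $\overline{E}$). The only cosmetic difference is the final step, where the paper separates $\overline{A}\cap\nu$ from $\overline{B}\cap\nu$ by a Urysohn function and takes $X_1=f^{-1}([\tfrac12,1])\cap X$, $X_2=f^{-1}([0,\tfrac12])\cap X$, while you use disjoint open neighbourhoods and their complements; both rest on normality of the compact Hausdorff space $\alpha X$.
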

\begin{proof}
First we prove that $A$ and $B$ are asymptotically disjoint subsets of $X$ if and only if $\bar{A}\bigcap \bar{B}\bigcap (\alpha X\setminus X)=\emptyset$. Let $\omega \in \bar{A}\bigcap \bar{B}\bigcap (\alpha X\setminus X)$ for $A,B\subseteq X$. There are sequences $(x_{n})_{n\in \mathbb{N}}$ and $(y_{n})_{n\in \mathbb{N}}$ in $A$ and $B$ respectively such that they converge to $\omega$. Let $E=\{(x_{n},y_{n})\mid n\in \mathbb{N}\}$. It is straightforward to show that each sequence in $E$ can be assumed to be a subsequence of $((x_{n},y_{n}))_{n\in \mathbb{N}}$ and this shows that $\bar{E}\bigcap ((\alpha X\times \alpha X)\setminus (X\times X))=\{(\omega,\omega)\}$. So $E\in \mathcal{E}$. Let $L_{1}=\{x_{n}\mid n\in \mathbb{N}\}\subseteq A$ and $L_{2}=\{y_{n}\mid n\in \mathbb{N}\}\subseteq B$. We have $L_{1}\lambda L_{2}$ and it shows that $A$ and $B$ are not asymptotically disjoint. Now assume that $A$ and $B$ are two subsets of $X$ such that they are not asymptotically disjoint. Let $L_{1}$ and $L_{2}$ be two unbounded and asymptotically alike subsets of $A$ and $B$ respectively. There is a $E\in \mathcal{E}$ such that $L_{1}\subseteq E(L_{2})$ and $L_{2}\subseteq E(L_{1})$. Let $\omega \in \bar{L_{1}}\bigcap (\alpha X\setminus X)$ and $(x_{n})_{n\in \mathbb{N}}$ be a sequence in $L_{1}$ and $x_{n}\rightarrow \omega$. For each $n\in \mathbb{N}$ choose $y_{n}\in L_{2}$ such that $(x_{n},y_{n})\in E$. Since $E\in \mathcal{E}$, $y_{n}\rightarrow \omega$. It shows that $\omega \in \bar{L_{1}}\bigcap \bar{L_{2}}\bigcap (\alpha X\setminus X)$. Thus $\bar{A}\bigcap \bar{B}\bigcap (\alpha X\setminus X)\neq \emptyset$.\\
Now we show this proposition claim. Let $A$ and $B$ be two asymptotically disjoint subsets of $X$. So $\bar{A}\bigcap \bar{B}\bigcap (\alpha X\setminus X)= \emptyset$. Since $\alpha X$ is a normal topological space there is a map $f:\alpha X\rightarrow [0,1]$ such that $f(\bar{A}\bigcap (\alpha X\setminus X)=0$ and $f(\bar{B}\bigcap (\alpha X\setminus X)=1$. Let $X_{1}=f^{-1}([\frac{1}{2},1])\bigcap X$ and $X_{2}=f^{-1}([0,\frac{1}{2}])\bigcap X$. By what we proved first here it is straightforward to show that $A$ and $B$ are asymptotically disjoint from $X_{1}$ and $X_{2}$ respectively.
\end{proof}
\begin{definition}
Let $(X,\mathcal{T})$ be a topological space and $\lambda$ be an AS.R. compatible with $\mathcal{T}$. We say that $\lambda$ is \emph{proper} if each bounded subset of $X$ has a compact closure.
\end{definition}
It is straightforward to show that a proper coarse structure admits a proper AS.R. It is an immediate result of the definition that if there exists a proper AS.R. on a topological space $X$, then $X$ is a locally compact topological space.
\begin{proposition}
Suppose that $\lambda$ is a proper and asymptotically connected AS.R. on a topological space $X$. Then a subset $A$ of $X$ is bounded if and only if $\bar{A}$ is compact.
\end{proposition}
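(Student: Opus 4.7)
The forward direction is immediate: if $A$ is bounded, then by the definition of properness $\bar A$ is compact, so no work is needed there.

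For the converse, suppose $\bar A$ is compact. Since $\lambda$ is compatible with $\mathcal{T}$, property (ii) of compatibility gives $A\lambda\bar A$, so once I know $\bar A$ is bounded I can conclude that $A\lambda\bar A\lambda x$ for some $x\in X$, and hence $A\lambda x$. Thus the whole problem reduces to showing that every compact subset $K\subseteq X$ is bounded.

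To handle a compact $K$, I will use the other half of compatibility: each singleton $\{x\}\subseteq K$ admits an open asymptotic neighbourhood $U_x$, i.e.\ $x\in U_x$ and $\{x\}\lambda U_x$. By compactness I can extract a finite subcover $K\subseteq U_{x_1}\cup\cdots\cup U_{x_n}$. Property (i) of Proposition~\ref{2setare}, applied inductively, yields
\[
\{x_1,\dots,x_n\}\;\lambda\;U_{x_1}\cup\cdots\cup U_{x_n}.
\]
Now I invoke asymptotic connectedness: $\{x_i\}\lambda\{x_1\}$ for every $i$, so again by inductive application of (i) the finite set $\{x_1,\dots,x_n\}$ is asymptotically alike to $\{x_1\}$, hence bounded. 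Transitivity of $\lambda$ then gives $U_{x_1}\cup\cdots\cup U_{x_n}\lambda x_1$, so this finite union is bounded. Finally, since $K$ is contained in this bounded set, Proposition~\ref{4setare} (every subset of a bounded set is bounded) forces $K$ to be bounded, as required.

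The only non-routine step is realising that one must combine asymptotic connectedness with the union axiom (i) to promote boundedness from the individual centres $x_i$ to the finite union of their asymptotic neighbourhoods; without asymptotic connectedness, nothing guarantees that a finite set of points is bounded, and without the union axiom, one cannot pass from pointwise asymptotic neighbourhoods to an asymptotic neighbourhood of $K$. Everything else is a direct unwrapping of the definitions of compatibility and properness.
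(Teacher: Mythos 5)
Your proposal is correct and follows essentially the same route as the paper: cover the compact closure by finitely many asymptotic neighbourhoods of points, use property (i) of Proposition \ref{2setare} together with asymptotic connectedness to see that the finite union is asymptotically alike to a single point, and then pass to the subset $A$. The only cosmetic difference is that you invoke compatibility (ii) ($A\lambda\bar{A}$) to descend from $\bar{A}$ to $A$, whereas the paper simply uses $A\subseteq\bigcup U_i$ and the fact that subsets of bounded sets are bounded.
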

\begin{proof}
The "only if" part is a part of the definition. Suppose that $A$ is a subset of $X$ with compact closure. We cover $\bar{A}$ with the $U_{i}$, $i\in \{1,...,n\}$, such that each $U_{i}$ is an asymptotic neighbourhood of some $a_{i}\in \bar{A}$. We have $(\bigcup_{i=1}^{n}U_{i})\lambda \{a_{1},...,a_{n}\}$ by (i) of \ref{2setare}. Also we have $\{a_{1},...,a_{n}\}\lambda a_{1}$ by asymptotic connectedness of $\lambda$ so \ref{4setare} leads to $A\lambda a_{1}$.
\end{proof}
From now on we assume that all AS.R. spaces are asymptotically connected.
\begin{definition}\label{7setare}
Let $(X,\mathcal{T})$ be a topological space and $\lambda$ be an AS.R. compatible with $\mathcal{T}$. For two nonempty subsets $A$ and $B$ of $X$ define $A\sim B$ if $A=B$ or $A$ and $B$ are unbounded asymptotically alike subsets of $X$. The relation $\sim$ is an equivalence relation on the family of all nonempty subsets of $X$. Let $\gamma X$ denotes the family of all closed ultrafilters on $X$ and $\mathcal{F}_{1},\mathcal{F}_{2}\in \gamma X$. Define $\mathcal{F}_{1}\approx \mathcal{F}_{2}$ if for any $A\in \mathcal{F}_{1}$ and $B\in \mathcal{F}_{2}$ there are $L_{1}\subseteq A$ and $L_{2}\subseteq B$ such that $L_{1}\sim L_{2}$. We denote the equivalence class of $\mathcal{F}\in \gamma X$ by $[\mathcal{F}]$.
\end{definition}
\begin{lemma}\label{zzz}
Let $(X,\lambda)$ be an AS.R. space. If $A$ and $B$ are asymptotically disjoint subsets of $X$ and $A\lambda C$ and $B\lambda D$ for some $C,D\subseteq X$, then $C$ and $D$ are asymptotically disjoint too.
\end{lemma}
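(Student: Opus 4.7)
The plan is to argue by contradiction, pulling back a witness of non-disjointness in $(C,D)$ to a witness in $(A,B)$ using the ``shrinking'' property of asymptotic resemblance recorded in Proposition \ref{6setare}.

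Suppose, contrary to the claim, that $C$ and $D$ are not asymptotically disjoint. Then there exist unbounded subsets $M_{1}\subseteq C$ and $M_{2}\subseteq D$ with $M_{1}\lambda M_{2}$. The first step is to transfer $M_{1}$ back into $A$ and $M_{2}$ back into $B$. Since $C\lambda A$ and $\emptyset\neq M_{1}\subseteq C$, Proposition \ref{6setare} produces a nonempty $L_{1}\subseteq A$ with $M_{1}\lambda L_{1}$. Similarly, from $D\lambda B$ and $\emptyset\neq M_{2}\subseteq D$ we obtain a nonempty $L_{2}\subseteq B$ with $M_{2}\lambda L_{2}$.

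Next I would verify that $L_{1}$ and $L_{2}$ are unbounded. If $L_{1}$ were bounded, there would be some $x\in X$ with $L_{1}\lambda x$; transitivity of $\lambda$ then forces $M_{1}\lambda x$, contradicting the fact that $M_{1}$ is unbounded. The same reasoning applied to $L_{2}$ and $M_{2}$ shows $L_{2}$ is unbounded. Finally, chaining the relations $L_{1}\lambda M_{1}$, $M_{1}\lambda M_{2}$, $M_{2}\lambda L_{2}$ through the equivalence relation $\lambda$ yields $L_{1}\lambda L_{2}$. But then $L_{1}\subseteq A$ and $L_{2}\subseteq B$ are unbounded asymptotically alike subsets, contradicting the assumption that $A$ and $B$ are asymptotically disjoint.

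The only non-routine ingredient is the application of Proposition \ref{6setare}, which is precisely the tool needed to push a subset of $C$ (respectively $D$) down to a ``corresponding'' subset of $A$ (respectively $B$); everything else is transitivity of $\lambda$ plus the simple observation that ``asymptotically alike to an unbounded set forces unboundedness,'' which itself follows at once from Proposition \ref{4setare}. I expect no serious obstacle in carrying this out.
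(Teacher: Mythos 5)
Your proof is correct and is exactly the argument the paper intends: the paper's own proof consists solely of the sentence ``It is an immediate consequence of \ref{6setare},'' and your write-up supplies precisely the details (transfer via \ref{6setare}, unboundedness via transitivity, then contradiction with the disjointness of $A$ and $B$) that this remark leaves implicit.
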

\begin{proof}
It is an immediate consequence of \ref{6setare}.
\end{proof}
\begin{proposition}
Let $(X,\mathcal{T})$ be a topological space and let $\lambda$ be an AS.R. compatible with $\mathcal{T}$. If $(X,\lambda)$ is an asymptotically normal AS.R. space then the relation $\approx$ defined in \ref{7setare} is an equivalence relation.
\end{proposition}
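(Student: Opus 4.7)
Reflexivity and symmetry are immediate: for any closed ultrafilter $\mathcal{F}$ and $A,B\in\mathcal{F}$, the intersection $A\cap B\in\mathcal{F}$ is nonempty, so picking $p\in A\cap B$ and setting $L_{1}=L_{2}=\{p\}$ gives $L_{1}\sim L_{2}$; and the defining clause of $\approx$ is visibly symmetric in its two arguments.

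For transitivity, assume $\mathcal{F}_{1}\approx\mathcal{F}_{2}\approx\mathcal{F}_{3}$ and argue by contradiction: if $\mathcal{F}_{1}\not\approx\mathcal{F}_{3}$, fix witnesses $A\in\mathcal{F}_{1}$, $C\in\mathcal{F}_{3}$ for which no $L_{1}\subseteq A$, $L_{3}\subseteq C$ satisfies $L_{1}\sim L_{3}$. A common point of $A$ and $C$ would yield a singleton witness, so $A\cap C=\emptyset$; and for unbounded $L_{1}\subseteq A$, $L_{3}\subseteq C$ we are forced into $L_{1}\bar{\lambda}L_{3}$, i.e.\ $A$ and $C$ are asymptotically disjoint. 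Asymptotic normality then supplies $X=X_{1}\cup X_{2}$ with $A$ asymptotically disjoint from $X_{1}$ and $C$ from $X_{2}$; since $X_{i}\lambda\overline{X_{i}}$ by compatibility, Lemma~\ref{zzz} upgrades this to $A$ asymptotically disjoint from $\overline{X_{1}}$ and $C$ from $\overline{X_{2}}$, with $X=\overline{X_{1}}\cup\overline{X_{2}}$ and both sides closed. Primeness of the closed ultrafilter $\mathcal{F}_{2}$ places one of $\overline{X_{1}},\overline{X_{2}}$ into $\mathcal{F}_{2}$; I would take $\overline{X_{1}}\in\mathcal{F}_{2}$ without loss of generality, the alternative being handled symmetrically using $\mathcal{F}_{2}\approx\mathcal{F}_{3}$.

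The pivotal observation is that $D:=A\cap\overline{X_{1}}$ is bounded: any unbounded subset of $D$ sits in both $A$ and $\overline{X_{1}}$ and is $\lambda$-alike to itself, contradicting the asymptotic disjointness of $A$ and $\overline{X_{1}}$. Applying $\mathcal{F}_{1}\approx\mathcal{F}_{2}$ to $A$ and $\overline{X_{1}}$ yields some $L_{1}\sim L_{2}$; the unbounded-alike option is killed by asymptotic disjointness, leaving $L_{1}=L_{2}\subseteq D$, so $D\neq\emptyset$. Now split on whether $D\in\mathcal{F}_{1}$. If $D\notin\mathcal{F}_{1}$, maximality of the closed ultrafilter furnishes a closed $A^{*}\in\mathcal{F}_{1}$ with $A^{*}\cap D=\emptyset$; setting $A':=A\cap A^{*}\in\mathcal{F}_{1}$ gives $A'\cap\overline{X_{1}}=\emptyset$, and $\mathcal{F}_{1}\approx\mathcal{F}_{2}$ applied to $A',\overline{X_{1}}$ admits no realizer ($L_{1}=L_{2}$ would be empty and the unbounded-alike option still contradicts $A$ a.d.\ $\overline{X_{1}}$), contradiction. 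If $D\in\mathcal{F}_{1}$, I bootstrap $D$ through the chain: applying $\mathcal{F}_{1}\approx\mathcal{F}_{2}$ to the \emph{bounded} $D$ and an arbitrary $B\in\mathcal{F}_{2}$ forces $L_{1}=L_{2}\subseteq D\cap B$ (boundedness of $L_{1}\subseteq D$ kills the unbounded-alike option), so $D$ meets every element of $\mathcal{F}_{2}$ and hence $D\in\mathcal{F}_{2}$ by maximality; the identical argument with $\mathcal{F}_{2}\approx\mathcal{F}_{3}$ then gives $D\in\mathcal{F}_{3}$; but $D\subseteq A$ and $A\cap C=\emptyset$ force $D\cap C=\emptyset$, clashing with $D,C\in\mathcal{F}_{3}$.

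The main obstacle I expect is precisely this second case: asymptotic normality alone does not prevent $A$ and $\overline{X_{1}}$ from meeting in a bounded piece, and that piece can a priori lie in $\mathcal{F}_{1}$. The bootstrapping step resolves it via the key degeneracy that for a \emph{bounded} member of $\mathcal{F}_{1}$ the $\approx$-condition collapses to a plain nonempty-intersection condition, which then transports the same bounded set across the chain $\mathcal{F}_{1},\mathcal{F}_{2},\mathcal{F}_{3}$ and collides with the disjointness $A\cap C=\emptyset$ that underlies the contradiction hypothesis.
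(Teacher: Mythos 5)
Your proof is correct, but it takes a genuinely different route from the paper's. The paper never confronts the possibility that the normality decomposition meets $A$ or $C$: it first shows $A,C\notin\mathcal{F}_{2}$ (via $\mathcal{F}_{2}\approx\mathcal{F}_{3}$ and $\mathcal{F}_{1}\approx\mathcal{F}_{2}$ respectively), uses maximality to pick $B\in\mathcal{F}_{2}$ with $B\cap(A\cup C)=\emptyset$, and only then intersects the decomposition with $B$, setting $B_{i}=B\cap X_{i}$; primeness is applied to $B=\overline{B_{1}}\cup\overline{B_{2}}$, and whichever $\overline{B_{i}}$ lands in $\mathcal{F}_{2}$ is automatically \emph{disjoint} from the relevant set as well as asymptotically disjoint, so the contradiction is immediate. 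You instead apply primeness to $X=\overline{X_{1}}\cup\overline{X_{2}}$ directly, which forces you to deal with the bounded overlap $D=A\cap\overline{X_{1}}$; your resolution --- that for a bounded member of an ultrafilter the $\approx$-condition degenerates to nonempty intersection, so $D$ transports along the chain into $\mathcal{F}_{3}$ and collides with $A\cap C=\emptyset$, while the case $D\notin\mathcal{F}_{1}$ is removed by shrinking $A$ inside the closed ultrafilter --- is sound, and every tool you use (primeness, the maximality characterization of closed ultrafilters, Lemma~\ref{zzz} plus compatibility, and that subsets of bounded sets are bounded) is available in the paper. The paper's ordering of steps buys a shorter argument; your version is longer but isolates a reusable observation, namely that an ultrafilter containing a bounded closed set forces any $\approx$-equivalent ultrafilter to contain that same set.
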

\begin{proof}
The relation $\approx$ is obviously symmetric and reflexive. Suppose that $\mathcal{F}_{1}\approx \mathcal{F}_{2}$ and $\mathcal{F}_{2}\approx \mathcal{F}_{3}$ we claim $\mathcal{F}_{1}\approx \mathcal{F}_{3}$. Suppose that, on the contrary to our claim, there are disjoint sets $A\in \mathcal{F}_{1}$ and $C\in \mathcal{F}_{3}$ such that they are asymptotically disjoint. So $A$ and $C$ are not in $\mathcal{F}_{2}$. Choose $B\in \mathcal{F}_{2}$ such that $B\bigcap (A\bigcup C)=\emptyset$. Since $(X,\lambda)$ is asymptotically normal there are $X_{1}\subseteq X$ and $X_{2}\subseteq X$ such that $X_{1}\bigcup X_{2}=X$ and they are asymptotically disjoint from $A$ and $C$ respectively. Let $B_{1}=B\bigcap X_{1}$ and $B_{2}=B\bigcap X_{2}$. By compatibility and \ref{zzz}, $\bar{B_{1}}$ and $\bar{B_{2}}$ are asymptotically disjoint from $A$ and $C$ respectively. Since $\mathcal{F}_{2}$ is a closed ultrafilter and $B=\bar{B_{1}}\bigcup \bar{B_{2}}$ so $\bar{B_{1}}\in \mathcal{F}_{2}$ or $\bar{B_{2}}\in \mathcal{F}_{2}$ which contradicts $\mathcal{F}_{1}\approx \mathcal{F}_{2}$ or $\mathcal{F}_{2}\approx \mathcal{F}_{3}$ respectively.
\end{proof}
Let us recall that for an open subset $U$ of a topological space $X$, $U^{*}$ is the family of all closed ultrafilters on $X$ such that $U$ contains some elements of them.
\begin{proposition}\label{bbbb}
Let $X$ be a normal topological space and let $\lambda$ be a compatible and asymptotically normal AS.R. on $X$. Then the set $R=\{(\mathcal{F}_{1},\mathcal{F}_{2})\in \gamma X\times \gamma X\mid \mathcal{F}_{1}\approx \mathcal{F}_{2}\}$ is closed in $\gamma X\times \gamma X$.
\end{proposition}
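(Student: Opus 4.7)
The plan is to prove $R$ is closed by exhibiting, for each point $(\mathcal{F}_{1},\mathcal{F}_{2})$ outside $R$, a basic open neighborhood of it in $\gamma X\times \gamma X$ that misses $R$. Suppose $\mathcal{F}_{1}\not\approx \mathcal{F}_{2}$. Unpacking the definition of $\approx$, there exist closed sets $A\in \mathcal{F}_{1}$ and $B\in \mathcal{F}_{2}$ for which no nonempty $L_{1}\subseteq A$, $L_{2}\subseteq B$ satisfy $L_{1}\sim L_{2}$. Specializing to singletons $L_{1}=L_{2}=\{x\}$ forces $A\cap B=\emptyset$, and specializing to unbounded subsets forces $A$ and $B$ to be asymptotically disjoint.

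Using topological normality of $X$, choose disjoint open sets $V_{1}\supseteq A$ and $V_{2}\supseteq B$. Using compatibility of $\lambda$, choose open asymptotic neighborhoods $W_{1}$ of $A$ and $W_{2}$ of $B$, so that $A\lambda W_{1}$ and $B\lambda W_{2}$. Set $U_{1}:=V_{1}\cap W_{1}$ and $U_{2}:=V_{2}\cap W_{2}$; these are open, disjoint ($U_{1}\cap U_{2}\subseteq V_{1}\cap V_{2}=\emptyset$), and contain $A$ and $B$ respectively, so $\mathcal{F}_{1}\in U_{1}^{*}$ and $\mathcal{F}_{2}\in U_{2}^{*}$.

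The main obstacle is showing that $W_{1}$ and $W_{2}$ (hence their subsets $U_{1}$ and $U_{2}$) are asymptotically disjoint. Suppose not: there are unbounded $L_{1}\subseteq W_{1}$, $L_{2}\subseteq W_{2}$ with $L_{1}\lambda L_{2}$. Applying Proposition \ref{6setare} to $W_{1}\lambda A$ with $L_{1}\subseteq W_{1}$ produces a nonempty $L_{1}'\subseteq A$ with $L_{1}\lambda L_{1}'$; similarly pick nonempty $L_{2}'\subseteq B$ with $L_{2}\lambda L_{2}'$. Transitivity of $\lambda$ gives $L_{1}'\lambda L_{2}'$. Since $L_{1}$ is unbounded, so is $L_{1}'$ (otherwise $L_{1}'\lambda \{x\}$ for some $x$ would, by transitivity, force $L_{1}\lambda \{x\}$, contradicting unboundedness of $L_{1}$); similarly $L_{2}'$ is unbounded. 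This contradicts the asymptotic disjointness of $A$ and $B$.

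Now finish: take any $(\mathcal{F}_{1}',\mathcal{F}_{2}')\in U_{1}^{*}\times U_{2}^{*}$ and pick closed $F_{i}\in \mathcal{F}_{i}'$ with $F_{i}\subseteq U_{i}$. Then $F_{1}\cap F_{2}=\emptyset$ and $F_{1},F_{2}$ are asymptotically disjoint (asymptotic disjointness passes to subsets), so no nonempty $L_{1}\subseteq F_{1}$, $L_{2}\subseteq F_{2}$ satisfy $L_{1}\sim L_{2}$; that is, $F_{1}$ and $F_{2}$ themselves witness $\mathcal{F}_{1}'\not\approx \mathcal{F}_{2}'$. Hence $U_{1}^{*}\times U_{2}^{*}$ is an open neighborhood of $(\mathcal{F}_{1},\mathcal{F}_{2})$ disjoint from $R$, completing the argument. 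Asymptotic normality is not invoked directly in this step; it enters only through the preceding proposition that makes $\approx$ an equivalence relation, which gives $R$ its intended interpretation.
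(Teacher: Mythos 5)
Your proof is correct and takes essentially the same route as the paper: both produce, for a pair $(\mathcal{F}_{1},\mathcal{F}_{2})\notin R$, disjoint and asymptotically disjoint witnesses $A\in\mathcal{F}_{1}$, $B\in\mathcal{F}_{2}$, and then separate them by the basic open set $U^{*}\times V^{*}$ coming from disjoint asymptotic neighbourhoods obtained via normality and compatibility. The only cosmetic difference is at the end: the paper applies \ref{sss} and \ref{zzz} to the sets $D_{1}\cup A$ and $D_{2}\cup B$ inside the ultrafilters, whereas you prove (inline, essentially reproving \ref{zzz}) that the neighbourhoods themselves are asymptotically disjoint and then pass to their closed subsets; both versions are valid.
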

\begin{proof}
Suppose that $(\mathcal{F}_{1},\mathcal{F}_{2})$ is not in $R$. So there are disjoint sets $A\in \mathcal{F}_{1}$ and $B\in \mathcal{F}_{2}$ such that they are also asymptotically disjoint. We choose asymptotic neighbourhoods $A\subseteq U$ and $B\subseteq V$ such that $U\bigcap V=\emptyset$. So $\mathcal{F}_{1}\in U^{*}$ and $\mathcal{F}_{2}\in V^{*}$. Now assume that $\mathcal{H}_{1}\in U^{*}$ and $\mathcal{H}_{2}\in V^{*}$. Thus there are $D_{1}\in \mathcal{H}_{1}$ and $D_{2}\in \mathcal{H}_{2}$ such that $D_{1}\subseteq  U$ and $D_{2}\subseteq V$. So $D_{1}\bigcup A\in \mathcal{H}_{1}$ and $D_{2}\bigcup B\in \mathcal{H}_{2}$. Also we have $(D_{1}\bigcup A)\lambda A$ and $(D_{2}\bigcup B)\lambda B$ by \ref{sss}. By \ref{zzz}, $D_{1}\bigcup A$ and $D_{2}\bigcup B$ are asymptotically disjoint. Therefore the open neighbourhood $U^{*}\times V^{*}$ of $(\mathcal{F}_{1},\mathcal{F}_{2})$ is disjoint from $R$.
\end{proof}
 Let $\lambda$ be a compatible AS.R. on a Hausdorff topological space $X$. Let us recall that for a point $x\in X$, $\sigma_{x}$ denotes the family of all closed subset of $X$ that contains $x$ and the map $\sigma: X\rightarrow \gamma X$ defined by $\sigma(x)=\sigma_{x}$ is a topological embedding. For two points $x,y\in X$, it is straightforward to show that $\sigma_{x} \approx \sigma_{y}$ if and only if $x=y$. Thus the map $\phi :X\rightarrow \frac{\gamma X}{\approx}$ defined by $\phi (x)=[\sigma_{x}]$ is one to one.
\begin{corollary}
Let $X$ be a normal topological space and let $\lambda$ be a proper and asymptotically normal AS.R. on $X$. Then $\frac{\gamma X}{\approx}$ is a Hausdorff compactification of $X$.
\end{corollary}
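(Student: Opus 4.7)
The compactification claim needs four things: $\gamma X/\approx$ must be compact, Hausdorff, $\phi(X)$ dense in it, and $\phi$ a topological embedding. Compactness is automatic since $\gamma X/\approx$ is the continuous image of the compact Wallman space $\gamma X$ under the quotient map $\pi$, and density of $\phi(X)=\pi(\sigma(X))$ follows from density of $\sigma(X)$ in $\gamma X$ together with continuity and surjectivity of $\pi$. For Hausdorffness, I would first note that normality of $X$ guarantees that $\gamma X$ itself is Hausdorff (a classical property of the Wallman compactification of a normal space); Proposition \ref{bbbb} identifies the graph $R$ of $\approx$ as a closed subset of $\gamma X\times\gamma X$, after which the standard topological fact that the quotient of a compact Hausdorff space by a closed equivalence relation is again Hausdorff completes this step.

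The more delicate part is that $\phi:X\to\gamma X/\approx$ is a topological embedding. Continuity and injectivity are recorded in the paragraph preceding the corollary. The key step will be the identity $[\sigma_x]=\{\sigma_x\}$ for every $x\in X$. To prove it, suppose $\mathcal{F}\approx\sigma_x$ and apply the defining condition of $\approx$ to the closed set $\{x\}\in\sigma_x$ and an arbitrary $B\in\mathcal{F}$: one obtains nonempty $L_1\subseteq\{x\}$ and $L_2\subseteq B$ with $L_1\sim L_2$. Necessarily $L_1=\{x\}$, which is bounded, so the ``both unbounded'' alternative in the definition of $\sim$ is ruled out and one must have $L_1=L_2$. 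Hence $x\in B$ for every $B\in\mathcal{F}$, and since $\mathcal{F}$ is a closed ultrafilter while $\{x\}$ is closed and meets every member of $\mathcal{F}$, one concludes $\{x\}\in\mathcal{F}$, so $\mathcal{F}=\sigma_x$.

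From this identity, $\pi^{-1}(\phi(X))=\sigma(X)$; that is, $\sigma(X)$ is saturated under $\pi$. Properness of $\lambda$ renders $X$ locally compact, and a locally compact Hausdorff subspace that is dense in a Hausdorff ambient space is automatically open there, so $\sigma(X)$ is open in $\gamma X$ and consequently $\phi(X)$ is open in $\gamma X/\approx$. The restriction of $\pi$ to the saturated open set $\sigma(X)$ is therefore a quotient map onto $\phi(X)$, and because $\approx$ is trivial on $\sigma(X)$ by the identity just established, this restricted quotient is a continuous bijection on an open subspace, hence a homeomorphism. Composing with the Wallman embedding $\sigma$ yields that $\phi$ is a homeomorphism onto its image. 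The main obstacle is precisely the identity $[\sigma_x]=\{\sigma_x\}$: without the nonemptiness clause built into the definition of $\sim$ combined with the boundedness of singletons, one could easily have nontrivial equivalence classes inside $\sigma(X)$ and lose the embedding property.
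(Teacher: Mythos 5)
Your proof is correct, and while its skeleton (compactness of the quotient, Hausdorffness via Proposition \ref{bbbb}, then the embedding) matches the paper's, the embedding step is carried out by a genuinely different argument. The paper works locally: given open $U\ni x$ it picks an asymptotic neighbourhood $W\subseteq U$ of $x$ via \ref{sss} and asserts $\pi^{-1}(\phi(W))=W^{*}$, so that $\phi(W)$ is a basic open set of the quotient sitting inside $\phi(U)$. You instead isolate the global statement $[\sigma_x]=\{\sigma_x\}$ (proved correctly from the boundedness of singletons, the nonemptiness built into $\sim$, and the maximality property of closed ultrafilters), conclude that $\sigma(X)$ is $\pi$-saturated, and then invoke two pieces of general topology: a dense locally compact subspace of a Hausdorff space is open, and the restriction of a quotient map to a saturated open set on which the relation is trivial is a homeomorphism onto an open image. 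The two routes are closely related -- the paper's unproved identity $\pi^{-1}(\phi(W))=W^{*}$ secretly requires both that $W^{*}=\sigma(W)$ (which uses properness, since $W$ is bounded and hence has compact closure) and your saturation claim $[\sigma_y]=\{\sigma_y\}$ for $y\in W$ -- so in a sense your write-up makes explicit exactly what the paper leaves as ``easy to verify.'' What your version buys is a cleaner separation of the AS.R.-specific input (the single identity $[\sigma_x]=\{\sigma_x\}$) from standard topology, at the cost of invoking the Hausdorffness of $\gamma X$ for normal $X$ and the locally-compact-dense-implies-open lemma; what the paper's version buys is a concrete basis of open sets $\phi(W)$ in $\phi(X)$, which it reuses implicitly elsewhere (e.g.\ in \ref{bbbb} and \ref{8setare}).
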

\begin{proof}
Since $\gamma X$ is compact, its quotient $\frac{\gamma X}{\approx}$ is compact too. By \ref{bbbb} $\frac{\gamma X}{\approx}$ is Hausdorff. It suffices to show that $\phi :X\rightarrow \frac{\gamma X}{\approx}$ is a topological embedding. Let $\pi:\gamma X\rightarrow \frac{\gamma X}{\approx}$ be the quotient map. Since $\phi=\pi \circ \sigma$, $\phi$ is a continuous map. Suppose that $U\subseteq X$ is an open set and $[\sigma_{x}]\in \phi (U)$. By \ref{sss} we can choose an asymptotic neighbourhood $W$ of $x$ such that $W\subseteq U$. It is easy to verify $\pi^{-1}(\phi (W))=W^{*}$. Thus $\phi (W)$ is open in $\frac{\gamma X}{\approx}$ and we have $[\sigma_{x}]\in \phi (W)\subseteq \phi (U)$. Therefore $\phi$ is a topological embedding and $\phi (X)$ is open in $\frac{\gamma X}{\approx}$.
\end{proof}

\begin{proposition}\label{8setare}
Let $(X,\mathcal{T})$ be a topological space and let $\lambda$ be an AS.R. compatible with $\mathcal{T}$. Suppose that $(x_{\alpha})_{\alpha \in I}$ and $(y_{\alpha})_{\alpha \in I}$ are two nets in $X$. Let $T_{\beta}=\{x_{\alpha}\mid \alpha \geq \beta \}$ and $S_{\beta}=\{y_{\alpha}\mid \alpha \geq \beta \}$. If $T_{\beta}\lambda S_{\beta}$ for all $\beta \in I$ and $\sigma_{x_{\alpha}}\rightarrow \mathcal{F}_{1}$ and $\sigma_{y_{\alpha}}\rightarrow \mathcal{F}_{2}$ for some $\mathcal{F}_{1},\mathcal{F}_{2}\in \gamma X\setminus\sigma(X)$ then $\mathcal{F}_{1}\approx \mathcal{F}_{2}$.
\end{proposition}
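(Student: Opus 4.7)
The plan is to fix arbitrary $A\in\mathcal{F}_{1}$ and $B\in\mathcal{F}_{2}$ and produce nonempty $L_{1}\subseteq A$, $L_{2}\subseteq B$ that are unbounded and asymptotically alike, so that $L_{1}\sim L_{2}$; since $A,B$ are arbitrary this will give $\mathcal{F}_{1}\approx\mathcal{F}_{2}$. Using the compatibility of $\lambda$ with $\mathcal{T}$, I first choose open asymptotic neighbourhoods $U\supseteq A$ and $V\supseteq B$ with $A\lambda U$ and $B\lambda V$. Because $A$ is a closed subset of $U$ lying in $\mathcal{F}_{1}$, the ultrafilter $\mathcal{F}_{1}$ contains a closed subset of $U$, so $\mathcal{F}_{1}\in U^{*}$; likewise $\mathcal{F}_{2}\in V^{*}$.

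Next I translate the convergence in $\gamma X$ back to $X$. Since $U^{*}$ is an open neighbourhood of $\mathcal{F}_{1}$ and $\sigma_{x_{\alpha}}\to\mathcal{F}_{1}$, eventually $\sigma_{x_{\alpha}}\in U^{*}$; because $\{x_{\alpha}\}$ is a closed subset of $X$ witnessing membership, this reads $x_{\alpha}\in U$ for all $\alpha\geq\beta_{1}$, for some $\beta_{1}\in I$. Analogously $y_{\alpha}\in V$ for $\alpha\geq\beta_{2}$. Choosing $\beta\in I$ with $\beta\geq\beta_{1},\beta_{2}$ gives $T_{\beta}\subseteq U$ and $S_{\beta}\subseteq V$, while the hypothesis of the proposition supplies $T_{\beta}\lambda S_{\beta}$. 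Applying Proposition \ref{6setare} to $A\lambda U$ with the nonempty subset $T_{\beta}\subseteq U$ produces a nonempty $L_{1}\subseteq A$ with $L_{1}\lambda T_{\beta}$; symmetrically one obtains a nonempty $L_{2}\subseteq B$ with $L_{2}\lambda S_{\beta}$. Transitivity of $\lambda$ then gives $L_{1}\lambda L_{2}$.

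The main obstacle is to verify that $L_{1}$ and $L_{2}$ are unbounded, so that $L_{1}\sim L_{2}$ (and not merely the trivial case $L_{1}=L_{2}$, which we cannot force since $A,B$ need not meet). By Proposition \ref{4setare} boundedness is transferred across the relation $\lambda$, so it suffices to show $T_{\beta}$ (and $S_{\beta}$) is unbounded. This is precisely where the assumption $\mathcal{F}_{1},\mathcal{F}_{2}\in\gamma X\setminus\sigma(X)$ enters: if $T_{\beta}$ were bounded then by compatibility $\overline{T_{\beta}}\lambda T_{\beta}$ is bounded as well, and $\overline{T_{\beta}}\in\mathcal{F}_{1}$; combined with $\sigma_{x_{\alpha}}\to\mathcal{F}_{1}$, I expect this to force $\mathcal{F}_{1}=\sigma_{x}$ for some $x$ in the closure of $T_{\beta}$, contradicting $\mathcal{F}_{1}\notin\sigma(X)$. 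Once this unboundedness is established, $L_{1}$ and $L_{2}$ are unbounded and asymptotically alike, hence $L_{1}\sim L_{2}$, and the arbitrariness of $A\in\mathcal{F}_{1}$ and $B\in\mathcal{F}_{2}$ yields $\mathcal{F}_{1}\approx\mathcal{F}_{2}$.
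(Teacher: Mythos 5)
Your construction is exactly the one in the paper: fix $A\in\mathcal{F}_{1}$, $B\in\mathcal{F}_{2}$, take asymptotic neighbourhoods $U\supseteq A$, $V\supseteq B$, use $\mathcal{F}_{i}\in U^{*},V^{*}$ and convergence to trap a tail $T_{\gamma}\subseteq U$, $S_{\gamma}\subseteq V$, then pull back along $A\lambda U$ and $B\lambda V$ via Proposition \ref{6setare} and finish by transitivity. The paper's proof stops at ``$T_{\gamma}\lambda L_{1}$ and $S_{\gamma}\lambda L_{2}$'' and never addresses whether $L_{1},L_{2}$ are unbounded, which is needed for $L_{1}\sim L_{2}$ (the case $L_{1}=L_{2}$ cannot be arranged since $A$ and $B$ may be disjoint). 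So you have correctly isolated the one nontrivial point that the paper glosses over.

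However, the step you flag is left as ``I expect this to force $\mathcal{F}_{1}=\sigma_{x}$,'' and as stated it does not follow from the hypotheses. What is true and easy is that $\overline{T_{\beta}}\in\mathcal{F}_{1}$ for every $\beta$: otherwise, by the closed-ultrafilter dichotomy some $F\in\mathcal{F}_{1}$ misses $\overline{T_{\beta}}$, so $\mathcal{F}_{1}\in(X\setminus\overline{T_{\beta}})^{*}$ and eventually $x_{\alpha}\notin\overline{T_{\beta}}$, a contradiction. But ``$\mathcal{F}_{1}$ contains a bounded closed set'' forces $\mathcal{F}_{1}\in\sigma(X)$ only when $\lambda$ is \emph{proper}: then $\overline{T_{\beta}}$ is compact, the family $\{F\cap\overline{T_{\beta}}\mid F\in\mathcal{F}_{1}\}$ has the finite intersection property, hence $\bigcap\mathcal{F}_{1}\neq\emptyset$ and $\mathcal{F}_{1}$ is fixed. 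Mere compatibility is not enough: for the AS.R. of Example \ref{lll} on $\mathbb{R}$ with its usual topology (compatible but not proper), $\mathbb{N}$ is $\lambda$-bounded yet closed, discrete and non-compact, so it lies in free closed ultrafilters; taking $x$- and $y$-nets running through $\mathbb{N}$ and $\mathbb{N}+1$ one gets tails that are $\lambda$-alike but bounded, and the conclusion $\mathcal{F}_{1}\approx\mathcal{F}_{2}$ can fail because disjoint bounded sets admit no $L_{1}\sim L_{2}$. So to close your argument you must either add properness (which holds in every application the paper makes of this proposition, e.g.\ \ref{oooo} and \ref{10setare}) or otherwise justify why the relevant tails are unbounded; under compatibility alone the proposition as literally stated is problematic.
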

\begin{proof}
Suppose that $A\in \mathcal{F}_{1}$ and $B\in \mathcal{F}_{2}$. We choose asymptotic neighbourhoods $U$ and $V$ of $A$ and $B$ respectively. So $\mathcal{F}_{1}\in U^{*}$ and $\mathcal{F}_{2}\in V^{*}$. Since $\sigma_{x_{\alpha}}\rightarrow \mathcal{F}_{1}$  and $\sigma_{y_{\alpha}}\rightarrow \mathcal{F}_{2}$ there are $\alpha , \beta \in I$ such that $T_{\alpha}\subseteq U$ and $S_{\beta}\subseteq V$. Let $\alpha , \beta \leq \gamma$ so $T_{\gamma}\subseteq U$ and $S_{\gamma} \subseteq V$ and this leads to $T_{\gamma} \lambda L_{1}$ and $S_{\gamma} \lambda L_{2}$ for some $L_{1}\subseteq A$ and $L_{2}\subseteq B$ by \ref{6setare}.
\end{proof}
 A compactification $\bar{X}$ of a proper coarse space $(X,\mathcal{E})$ is said to be a \emph{coarse compactification} of $X$ when, if $E\in \mathcal{E}$ and $(x_{\alpha},y_{\alpha})_{\alpha \in I}$ is a convergent net in $E$, then $x_{\alpha}\rightarrow \omega$ for $\omega \in \bar{X}\setminus X$ yields $y_{\alpha}\rightarrow \omega$ (\cite{Roe}).
\begin{corollary}\label{oooo}
Let $X$ be a normal topological space and let $\lambda$ be an AS.R. associated to a proper coarse structure $\mathcal{E}$ on $X$. Suppose  that $\lambda$ is asymptotically normal. Then $\frac{\gamma X}{\approx}$ is a coarse compactification.
\end{corollary}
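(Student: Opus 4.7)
My plan is to reduce the statement to Proposition \ref{8setare} via a subnet argument, with properness of $\mathcal{E}$ used precisely to keep both limits in the corona. Let $E\in\mathcal{E}$ and let $((x_\alpha,y_\alpha))_{\alpha\in I}$ be a net in $E$ with $\phi(x_\alpha)\to[\mathcal{F}]$ in $\frac{\gamma X}{\approx}$ where $[\mathcal{F}]\notin\phi(X)$. I need to show $\phi(y_\alpha)\to[\mathcal{F}]$. Since $\frac{\gamma X}{\approx}$ is compact Hausdorff (by the previous corollary), it suffices to verify that every subnet of $(\phi(y_\alpha))$ has a further subnet converging to $[\mathcal{F}]$. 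So, passing twice to subnets and using compactness of $\gamma X$, I may assume $\sigma_{x_\alpha}\to\mathcal{F}_1$ and $\sigma_{y_\alpha}\to\mathcal{F}_2$ in $\gamma X$ with $[\mathcal{F}_1]=[\mathcal{F}]$. Because $[\mathcal{F}]\notin\phi(X)=\pi(\sigma(X))$, necessarily $\mathcal{F}_1\in\gamma X\setminus\sigma(X)$.

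To apply \ref{8setare}, set $T_\beta=\{x_\alpha\mid\alpha\geq\beta\}$ and $S_\beta=\{y_\alpha\mid\alpha\geq\beta\}$. For every $\alpha\geq\beta$, $(x_\alpha,y_\alpha)\in E$ gives $y_\alpha\in E(x_\alpha)$ and $x_\alpha\in E^{-1}(y_\alpha)$, so $S_\beta\subseteq E(T_\beta)$ and $T_\beta\subseteq E^{-1}(S_\beta)$; with $E\cup E^{-1}\in\mathcal{E}$ this yields $T_\beta\lambda S_\beta$ for every $\beta\in I$.

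The principal obstacle is to show $\mathcal{F}_2\notin\sigma(X)$, since without this \ref{8setare} does not apply. This is exactly where properness is used. Suppose, for contradiction, that $\mathcal{F}_2=\sigma_y$ for some $y\in X$. Since $\sigma$ is a topological embedding into the Hausdorff space $\gamma X$, the convergence $\sigma_{y_\alpha}\to\sigma_y$ forces $y_\alpha\to y$ in $X$. Properness of $\mathcal{E}$ makes $X$ locally compact, so $y$ has a relatively compact (hence bounded) neighbourhood $K$, and $y_\alpha\in K$ eventually. Since controlled images of bounded sets are bounded (for any $F\in\mathcal{E}$ and bounded $B$, $F(B)$ is bounded), $E^{-1}(K)$ is bounded, and $x_\alpha\in E^{-1}(y_\alpha)\subseteq E^{-1}(K)$ eventually. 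By properness $\overline{E^{-1}(K)}$ is compact, so $(x_\alpha)$ has a subnet converging in $X$ to some $x$, whence the corresponding subnet of $(\sigma_{x_\alpha})$ converges to $\sigma_x\in\sigma(X)$. By Hausdorffness of $\gamma X$ this forces $\mathcal{F}_1=\sigma_x\in\sigma(X)$, contradicting $\mathcal{F}_1\notin\sigma(X)$.

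Having verified all hypotheses, Proposition \ref{8setare} yields $\mathcal{F}_1\approx\mathcal{F}_2$, so $[\mathcal{F}_2]=[\mathcal{F}_1]=[\mathcal{F}]$ and consequently $\phi(y_\alpha)\to[\mathcal{F}]$ along the chosen subnet. As the original subnet of $(\phi(y_\alpha))$ was arbitrary, $\phi(y_\alpha)\to[\mathcal{F}]$, and hence $\frac{\gamma X}{\approx}$ is a coarse compactification of $X$.
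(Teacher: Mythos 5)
Your proof is correct and follows essentially the same route as the paper: pass to subnets so that $\sigma_{x_\alpha}$ and $\sigma_{y_\alpha}$ converge in $\gamma X$, check $T_\beta\lambda S_\beta$, and invoke Proposition \ref{8setare}. The one substantive difference is that the paper simply \emph{assumes} both limit classes $[\mathcal{F}_1],[\mathcal{F}_2]$ lie in the corona, whereas you prove that $\mathcal{F}_2\notin\sigma(X)$ via properness (bounded neighbourhood of $y$, $E^{-1}(K)$ bounded with compact closure, forcing $\mathcal{F}_1\in\sigma(X)$); this fills a small gap the paper leaves implicit, and your argument for it is sound.
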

\begin{proof}
Let $(x_{\alpha},y_{\alpha})_{\alpha \in I}$ be a convergent net in $E\in \mathcal{E}$. Assume that $[\sigma_{x_{\alpha}}]\rightarrow [\mathcal{F}_{1}]$ and $[\sigma_{y_{\alpha}}]\rightarrow [\mathcal{F}_{2}]$ for $[\mathcal{F}_{1}],[\mathcal{F}_{2}]\in \frac{\gamma X}{\approx} \setminus\phi(X)$. Suppose that $\sigma_{x_{\alpha_{i}}}$ is a convergent subnet of $\sigma_{x_{\alpha}}$ and $\sigma_{y_{\alpha_{i_{k}}}}$ is a convergent subnet of $\sigma_{y_{\alpha_{i}}}$. If $\sigma_{x_{\alpha_{i_{k}}}}\rightarrow \mathcal{H}_{1}$ and  $\sigma_{y_{\alpha_{i_{k}}}}\rightarrow \mathcal{H}_{2}$ we have $\mathcal{H}_{1}\approx \mathcal{F}_{1}$ and $\mathcal{H}_{2}\approx \mathcal{F}_{2}$. Thus by \ref{8setare} we have $\mathcal{H}_{1}\approx \mathcal{H}_{2}$ and therefore $\mathcal{F}_{1}\approx \mathcal{F}_{2}$.
\end{proof}
\begin{corollary}\label{ss1}
Let $X$ be a normal topological space and let $\mathcal{E}$ be a proper coarse structure on $X$. Assume that the AS.R. associated to $\mathcal{E}$ is asymptotically normal. Then the identity map $i:X\rightarrow X$ extends uniquely to a continuous map of $hX$ into $\frac{\gamma X}{\approx}$.
\end{corollary}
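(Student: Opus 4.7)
The strategy is to invoke Gelfand duality, which identifies $C(hX)$ with $C_{h}(X)$: any unital $C^{*}$-subalgebra of $C_{h}(X)$ corresponds to a continuous quotient of $hX$. So the plan is to embed $C\!\left(\frac{\gamma X}{\approx}\right)$ into $C_{h}(X)$ via pullback along the map $\phi:X\to\frac{\gamma X}{\approx}$, and read off the desired extension as the dual map.

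The first ingredient is already in place: by Corollary \ref{oooo}, $\frac{\gamma X}{\approx}$ is a coarse compactification of $(X,\mathcal{E})$. I would then verify that for every continuous $f:\frac{\gamma X}{\approx}\to\mathbb{C}$, the composition $f\circ\phi$ is a Higson function on $X$. Boundedness and continuity are immediate. For the Higson condition, fix $E\in\mathcal{E}$ and $\varepsilon>0$; assuming no compact $K$ satisfies $|f\phi(x)-f\phi(y)|<\varepsilon$ on $E\setminus(K\times K)$, one constructs a net $(x_{\alpha},y_{\alpha})$ in $E$ with $|f\phi(x_{\alpha})-f\phi(y_{\alpha})|\geq\varepsilon$ and with at least one coordinate escaping every compact. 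Using properness of $\mathcal{E}$ (so $E^{-1}$ sends compact sets to bounded, hence relatively compact, sets) one deduces that the other coordinate must also escape every compact. Compactness of $\frac{\gamma X}{\approx}$ then lets us pass to subnets converging to points $[\mathcal{F}_{1}],[\mathcal{F}_{2}]$; since $\phi$ is a topological embedding onto an open subset, both limits lie in $\frac{\gamma X}{\approx}\setminus\phi(X)$. The coarse-compactification property of Corollary \ref{oooo} forces $[\mathcal{F}_{1}]=[\mathcal{F}_{2}]$, and continuity of $f$ yields the contradiction $|f\phi(x_{\alpha})-f\phi(y_{\alpha})|\to 0$.

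Once this is established, the pullback $f\mapsto f\circ\phi$ is a unital $*$-homomorphism $C\!\left(\frac{\gamma X}{\approx}\right)\hookrightarrow C_{h}(X)\cong C(hX)$, and it is injective because $\phi(X)$ is dense in $\frac{\gamma X}{\approx}$. Gelfand duality converts this injection of commutative unital $C^{*}$-algebras into a continuous surjection $\tilde{\imath}:hX\to\frac{\gamma X}{\approx}$, and tracing evaluations at points $x\in X$ shows $\tilde{\imath}$ restricts to $\phi$ on $\sigma(X)\subseteq hX$, i.e.\ it extends the identity on $X$. Uniqueness is automatic: $X$ is dense in $hX$ and $\frac{\gamma X}{\approx}$ is Hausdorff, so two continuous extensions agreeing on $X$ must coincide.

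The main obstacle I anticipate is the middle step, specifically confirming that both coordinates of the extracted net escape $\phi(X)$; this is where properness of the coarse structure is essential, and careful bookkeeping with the entourage $E^{-1}$ and local compactness is needed before Corollary \ref{oooo} can be brought to bear. Everything else is formal manipulation via Gelfand--Naimark together with the density of $X$ in $hX$.
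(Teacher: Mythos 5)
Your proof is correct and follows essentially the same route as the paper: the paper's own proof simply observes that the claim is an immediate consequence of Corollary \ref{oooo} together with Proposition 2.39 of \cite{Roe}, the universal property of the Higson compactification among coarse compactifications. What you have written out --- verifying that every $f\in C\bigl(\frac{\gamma X}{\approx}\bigr)$ pulls back to a Higson function via the coarse-compactification property established in Corollary \ref{oooo}, then dualizing the resulting inclusion $C\bigl(\frac{\gamma X}{\approx}\bigr)\hookrightarrow C_{h}(X)\cong C(hX)$ --- is precisely the standard proof of that cited proposition, so the two arguments coincide in substance.
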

\begin{proof}
It is an immediate consequence of previous corollary and 2.39 of \cite{Roe}.
\end{proof}
\begin{proposition}\label{ss2}
Assume the hypotheses of corollary \ref{ss1}. Each Higson function $f:X\rightarrow \mathbb{C}$ has a unique extension $\bar{f}:\frac{\gamma X}{\approx}\rightarrow \mathbb{C}$.
\end{proposition}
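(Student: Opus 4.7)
My plan is to factor the extension through the Wallman compactification $\gamma X$, exploiting the fact that $\frac{\gamma X}{\approx}$ is a continuous quotient of $\gamma X$. First I will extend the bounded continuous function $f$ to a continuous $\hat{f}:\gamma X\to\mathbb{C}$ given by $\hat{f}(\mathcal{F})=\lim_{\mathcal{F}}f$; this is well defined because for a normal Hausdorff space the Wallman compactification coincides with the Stone--Cech compactification, and every bounded continuous function on $X$ extends uniquely to such a compactification. Then I will show that $\hat{f}$ is constant on $\approx$-equivalence classes, so it descends to a continuous function $\bar{f}$ on $\frac{\gamma X}{\approx}$ via the universal property of the quotient topology; uniqueness of $\bar{f}$ is automatic because $\phi(X)$ is dense in $\frac{\gamma X}{\approx}$.

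The heart of the argument is to verify that $\mathcal{F}_{1}\approx\mathcal{F}_{2}$ forces $\hat{f}(\mathcal{F}_{1})=\hat{f}(\mathcal{F}_{2})$. I plan to argue by contradiction: assume $|\hat{f}(\mathcal{F}_{1})-\hat{f}(\mathcal{F}_{2})|=3\epsilon>0$. From the description of $\hat{f}$ as an ultrafilter limit, I will pick $A_{i}\in\mathcal{F}_{i}$ with $|f(x)-\hat{f}(\mathcal{F}_{i})|<\epsilon$ for every $x\in A_{i}$. The definition of $\approx$ then produces $L_{i}\subseteq A_{i}$ with $L_{1}\sim L_{2}$. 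The case $L_{1}=L_{2}$ is disposed of immediately by the triangle inequality at any common point. In the genuinely asymptotic case, where $L_{1}$ and $L_{2}$ are both unbounded and $L_{1}\lambda L_{2}$, I will select $E\in\mathcal{E}$ with $L_{1}\subseteq E(L_{2})$ and $L_{2}\subseteq E(L_{1})$, and then invoke the Higson property of $f$ to produce a compact $K\subseteq X$ with $|f(x)-f(y)|<\epsilon$ for every $(x,y)\in E\setminus(K\times K)$.

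To close the contradiction I will use properness of $\lambda$: since $L_{2}$ is unbounded its closure is not compact, so $L_{2}\not\subseteq K$. Picking $y\in L_{2}\setminus K$ and $x\in L_{1}$ with $(x,y)\in E$ (such an $x$ exists because $L_{2}\subseteq E(L_{1})$), the pair satisfies $(x,y)\in E\setminus(K\times K)$, and the triangle inequality yields
\[
|\hat{f}(\mathcal{F}_{1})-\hat{f}(\mathcal{F}_{2})|\leq|\hat{f}(\mathcal{F}_{1})-f(x)|+|f(x)-f(y)|+|f(y)-\hat{f}(\mathcal{F}_{2})|<3\epsilon,
\]
contradicting our choice of $\epsilon$.

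The main obstacle I anticipate is aligning the three quantifiers correctly---the sets $A_{i}\in\mathcal{F}_{i}$ capturing the values of $\hat{f}$, the subsets $L_{i}$ produced by $\approx$, and the compact $K$ from the Higson property---and then using properness of $\lambda$ to force $L_{2}$ to escape $K$. Once these are aligned, the Higson condition does precisely what it was designed for and the three error terms collapse to $3\epsilon$, delivering the contradiction.
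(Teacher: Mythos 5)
Your proposal is correct and follows essentially the same route as the paper: extend $f$ to $\hat{f}$ on the Wallman compactification $\gamma X$, use the Higson condition on an entourage $E$ witnessing $L_{1}\lambda L_{2}$ together with properness to find a pair in $E\setminus (K\times K)$ whose values are close to $\hat{f}(\mathcal{F}_{1})$ and $\hat{f}(\mathcal{F}_{2})$ respectively, and collapse the contradiction by the triangle inequality before descending to the quotient. You are in fact slightly more careful than the paper at two points --- you explicitly arrange $(x,y)\in E$ before invoking the Higson estimate, and you treat the degenerate case $L_{1}=L_{2}$ of the relation $\sim$ separately --- but these are refinements of the same argument, not a different one.
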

\begin{proof}
Let $f:X\rightarrow \mathbb{R}$ be a Higson function and $\hat{f}:\gamma X\rightarrow \mathbb{C}$ be its extension to $\gamma X$. Suppose that $\mathcal{F}_{1},\mathcal{F}_{2}\in \gamma X\setminus X$ and $\mathcal{F}_{1}\approx \mathcal{F}_{2}$. Let $\hat{f}(\mathcal{F}_{1})=x_{1}$ and $\hat{f}(\mathcal{F}_{2})=x_{2}$. Assume that $x_{1}\neq x_{2}$. Let $\delta=\frac{\mid x_{1}-x_{2}\mid}{4}$. Then $\hat{f}^{-1}(B(x_{1},\delta))$ and $\hat{f}^{-1}(B(x_{2},\delta))$ are open sets containing $\mathcal{F}_{1}$ and $\mathcal{F}_{2}$ respectively, so there are open sets $U\subseteq X$ and $V\subseteq X$ such that $\mathcal{F}_{1}\in U^{*}\subseteq \hat{f}^{-1}(B(x_{1},\delta))$ and $\mathcal{F}_{2}\in V^{*}\subseteq \hat{f}^{-1}(B(x_{2},\delta)$. Thus there are $A\in \mathcal{F}_{1}$ and $B\in\mathcal{F}_{2}$ such that $A\subseteq U$ and $B\subseteq V$. Since $\mathcal{F}_{1}\approx \mathcal{F}_{2}$ there are unbounded and asymptotically alike subsets $L_{1}\subseteq A$ and $L_{2}\subseteq B$. So there is $E\in \mathcal{E}$ such that $L_{1}\subset E(L_{2})$ and $L_{2}\subseteq E(L_{1})$. Since $f$ is a Higson function there is a compact $K\subseteq X$ such that $\mid f(x)-f(y) \mid <\delta$ for all $(x,y)\in E\setminus K\times K$. Let $x\in L_{1}\setminus K$ and $y\in L_{2}\setminus K$ so $\sigma_{x}\in U^{*}$ and $\sigma_{y}\in V^{*}$. It leads to $\hat{f}(\sigma_{x})=f(x)\in B(x_{1},\delta)$ and $\hat{f}(\sigma_{y})=f(y)\in B(x_{2},\delta)$ so $$\mid x_{2}-x_{1}\mid \leq \mid x_{2}-f(y) \mid + \mid f(x)-f(y) \mid+\mid x_{1}-f(x) \mid < 3\delta$$ Thus $\mid x_{2}-x_{1}\mid <\frac{3\mid x_{1}-x_{2}\mid}{4}$, a contradiction. Therefore $x_{1}=x_{2}$. Define $\bar{f}:\frac{\gamma X}{\approx}\rightarrow \mathbb{C}$ by $\bar{f}([\mathcal{F}])=\hat{f}(\mathcal{F})$. The map $\bar{f}$ is well defined and since $\bar{f}\circ \pi=\hat{f}$, it is continuous.
\end{proof}
\begin{corollary}\label{9setare}
Assume the hypotheses of corollary \ref{ss1}. Then $hX$ and $\frac{\gamma X}{\approx}$ are homeomorphic.
\end{corollary}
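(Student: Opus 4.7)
The plan is to combine Corollary \ref{ss1} with Proposition \ref{ss2} and invoke the universal property of the Higson compactification that comes from Gelfand--Naimark duality ($C(hX)\cong C_{h}(X)$). Corollary \ref{ss1} provides a continuous map $\pi:hX\rightarrow \frac{\gamma X}{\approx}$ extending $\id_{X}$; I will show that $\pi$ is a bijection, and since $hX$ is compact and $\frac{\gamma X}{\approx}$ is Hausdorff, this automatically upgrades to a homeomorphism.

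First I would handle surjectivity, which is essentially formal. Because $\pi$ restricts to the identity on $X$, its image contains the dense subset $X\subseteq \frac{\gamma X}{\approx}$. As $\pi(hX)$ is compact in a Hausdorff space it is closed, and a closed set containing a dense set must be the whole space. So $\pi(hX)=\frac{\gamma X}{\approx}$.

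The real work is injectivity, and this is where Proposition \ref{ss2} is crucial. Suppose $p,q\in hX$ with $p\neq q$. Since $hX$ is compact Hausdorff, Urysohn's lemma yields an $F\in C(hX)$ with $F(p)\neq F(q)$. By the Gelfand--Naimark description of the Higson compactification, $F$ is the unique continuous extension of its restriction $f:=F|_{X}\in C_{h}(X)$. Now apply Proposition \ref{ss2} to $f$: there is a continuous extension $\bar{f}:\frac{\gamma X}{\approx}\rightarrow \mathbb{C}$. Then $\bar{f}\circ\pi$ is a continuous function on $hX$ whose restriction to the dense subset $X$ coincides with $f$, hence by uniqueness $\bar{f}\circ\pi=F$. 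Evaluating, $\bar{f}(\pi(p))=F(p)\neq F(q)=\bar{f}(\pi(q))$, so $\pi(p)\neq \pi(q)$.

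Having established that $\pi$ is a continuous bijection from a compact space to a Hausdorff space, it is a closed map and therefore a homeomorphism. The only step with any real content is injectivity, and the only mildly delicate point there is the double use of the density of $X$: once to identify $F$ as the canonical extension of $f$ to $hX$, and once to verify $\bar{f}\circ \pi=F$. Neither invocation is technical given what has already been proved, so I would expect the write-up to be quite short.
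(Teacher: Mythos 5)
Your proof is correct, and it finishes the argument by a slightly different mechanism than the paper does. The paper's (very terse) proof uses Proposition \ref{ss2} to produce a second map: since every Higson function extends continuously to $\frac{\gamma X}{\approx}$, the identity on $X$ extends to a continuous map $\frac{\gamma X}{\approx}\rightarrow hX$; combined with the map $hX\rightarrow\frac{\gamma X}{\approx}$ from Corollary \ref{ss1}, the two composites agree with the identity on the dense subset $X$ and hence everywhere, so the maps are mutually inverse homeomorphisms. You instead work with the single map $\pi:hX\rightarrow\frac{\gamma X}{\approx}$ from \ref{ss1} and prove it is a bijection: surjectivity by the standard closed-image-contains-dense-set argument, and injectivity by using \ref{ss2} to extend a separating function $F\in C(hX)$ (restricted to $C_h(X)$ via Gelfand--Naimark) to $\bar f$ on $\frac{\gamma X}{\approx}$ and observing $\bar f\circ\pi=F$ by density. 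Both routes rest on the same two ingredients and the same density/uniqueness-of-extension principle; yours avoids explicitly invoking the partial order on compactifications to manufacture the reverse map, at the cost of the Urysohn separation step, while the paper's version makes the two-sided universal-property symmetry more visible. The minor points you flag (that $\phi(X)$ is dense in $\frac{\gamma X}{\approx}$, and that $\bar f$ restricts to $f$ on $X$) do hold: density follows since $\frac{\gamma X}{\approx}$ is shown to be a compactification of $X$, and $\bar f([\sigma_x])=\hat f(\sigma_x)=f(x)$ by construction in \ref{ss2}. No gap.
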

\begin{proof}
The proposition \ref{ss2} shows that the identity map $i:X\rightarrow X$ extends uniquely to a map from $\frac{\gamma X}{\approx}$ to $hX$. Thus \ref{ss1} shows that $hX$ and $\frac{\gamma X}{\approx}$ are homeomorphic.
\end{proof}
Suppose that $(X,\mathcal{T})$ is a topological space and $\lambda$ is a proper and asymptotically normal AS.R. on it. We call $\frac{\gamma X}{\approx}$ the \emph{asymptotic compactification} of $X$. We also call $\nu X=\frac{\gamma X}{\approx}\setminus\phi(X)$ the \emph{asymptotic corona} of $X$. For an AS.R. associated to a proper coarse structure $\mathcal{E}$ on $X$, \ref{9setare} shows that $\nu X$ is homeomorphic with Higson corona.
\begin{example}
Let $(X,d)$ be a metric space. For two subsets $A$ and $B$ of $X$, define $A\lambda B$ if $A$ and $B$ are both unbounded or $A$ and $B$ are both bounded. The relation $\lambda$ is a proper AS.R. on $(X,d)$. Two subsets $A$ and $B$ of $X$ are asymptotically disjoint if and only if $A$ is bounded or $B$ is bounded. For a bounded subset $A\subseteq X$, let $X_{1}=X\setminus A$ and $X_{2}=A$ then $X_{1}$ is asymptotically disjoint from $A$ and $X_{2}$ is asymptotically disjoint from $B$, for all $B\subseteq X$. Thus $(X,\lambda)$ is asymptotically normal AS.R. space. It is straightforward to show that $\mathcal{F}_{1}\approx \mathcal{F}_{2}$, for all $\mathcal{F}_{1},\mathcal{F}_{2}\in \gamma X\setminus \sigma(X)$. Therefore the asymptotic compactification of $X$ is the one point compactification of $(X,d)$.
\end{example}
\begin{example}
Suppose that $\lambda$ is the AS.R. introduced in \ref{lll} on $\mathbb{R}$. Since all two unbounded subsets of $\mathbb{R}$ with respect to $\lambda$, are asymptotically alike, two subsets $A$ and $B$ of $\mathbb{R}$ are asymptotically disjoint if and only if $A$ is bounded or $B$ is bounded with respect to $\lambda$. Let $A$ be a bounded subset of $\mathbb{R}$ with respect to $\lambda$. So $A\subseteq (r,+\infty)$ for some $r\in \mathbb{R}$. Let $X_{1}=(-\infty,r)$ and $X_{2}=(r,+\infty)$. The sets $X_{1}$ and $A$ are asymptotically disjoint and $X_{2}$ is asymptotically disjoint from $B$, for all $B\subseteq X$. Thus $(X,\lambda)$ is an asymptotically normal AS.R. space. At each point $x\in \mathbb{R}$ other than the origin assume the usual neighbourhood basis at $x$. At the origin let $\mathcal{B}=\{(-\epsilon,+\epsilon)\bigcup (n,+\infty)\mid n\in \mathbb{N},\epsilon>0 \}$ be the neighbourhood basis. Let $\mathcal{T}$ be the corresponding topology on $\mathbb{R}$. It is easy to show that $\lambda$ is a proper AS.R. on $(\mathbb{R},\mathcal{T})$ and $(\mathbb{R},\mathcal{T})$ is a normal topological space. For all $\mathcal{F}_{1},\mathcal{F}_{2}\in \gamma X\setminus \sigma(X)$ we have $\mathcal{F}_{1}\approx \mathcal{F}_{2}$. Therefore the asymptotic compactification of $(\mathbb{R},\lambda)$ is the one point compactification of $(\mathbb{R},\mathcal{T})$.
\end{example}
\begin{proposition}\label{map}
Let $X$ and $Y$ be two topological spaces equipped with two proper and asymptotically normal AS.R.s. For every continuous AS.R. mapping $f:X\rightarrow Y$ there exists a unique continuous extension $\tilde{f}:\frac{\gamma X}{\approx}\rightarrow \frac{\gamma Y}{\approx}$ which sends $\nu X$ to $\nu Y$.
\end{proposition}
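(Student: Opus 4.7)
The plan is to first extend $f$ to a continuous map $\hat{f}:\gamma X\to \gamma Y$ between the Wallman compactifications, then show that $\hat{f}$ respects the equivalence relation $\approx$ and carries $\gamma X\setminus \sigma_{X}(X)$ into $\gamma Y\setminus \sigma_{Y}(Y)$ (up to $\approx$), so that it descends to the desired map $\tilde{f}$ on the quotients. Since $Y$ is normal, $\gamma Y$ is compact Hausdorff and $\sigma_{Y}\circ f:X\to \gamma Y$ is continuous, so by the universal property of the Wallman compactification of a normal $T_{1}$ space this composition extends uniquely to a continuous map $\hat{f}:\gamma X\to \gamma Y$ with $\hat{f}\circ \sigma_{X}=\sigma_{Y}\circ f$.

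The first technical ingredient I need is the correspondence that for every closed $C\subseteq Y$ and every $\mathcal{F}\in\gamma X$,
\[
C\in\hat{f}(\mathcal{F})\quad\Longleftrightarrow\quad f^{-1}(C)\in\mathcal{F}.
\]
I would first establish the dual open-set identity $\hat{f}^{-1}(V^{*})=f^{-1}(V)^{*}$ from continuity of $\hat{f}$, density of $\sigma_{X}(X)$ in $\gamma X$, and the formula $\hat{f}\circ \sigma_{X}=\sigma_{Y}\circ f$, and then deduce the closed version by ultrafilter maximality and complementation. This is the step I expect to require the most care, being the only one that reaches into the Wallman construction rather than using the AS.R. machinery from Section 4.

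With this correspondence in hand, I would show $\mathcal{F}_{1}\approx \mathcal{F}_{2}\Rightarrow \hat{f}(\mathcal{F}_{1})\approx \hat{f}(\mathcal{F}_{2})$ by contradiction. If not, there are closed $A'\in \hat{f}(\mathcal{F}_{1})$ and $B'\in \hat{f}(\mathcal{F}_{2})$ which are asymptotically disjoint in $Y$, and by the correspondence $f^{-1}(A')\in \mathcal{F}_{1}$ and $f^{-1}(B')\in \mathcal{F}_{2}$. For any unbounded $L_{1}\subseteq f^{-1}(A')$ and $L_{2}\subseteq f^{-1}(B')$ with $L_{1}\lambda_{1}L_{2}$, the AS.R.-mapping property gives $f(L_{1})\lambda_{2} f(L_{2})$; properness of $f$ forces both $f(L_{1})$ and $f(L_{2})$ to be unbounded (otherwise $L_{i}\subseteq f^{-1}(f(L_{i}))$ would be bounded); and $f(L_{1})\subseteq A'$, $f(L_{2})\subseteq B'$ contradicts asymptotic disjointness of $A',B'$. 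Hence $f^{-1}(A')$ and $f^{-1}(B')$ are asymptotically disjoint, contradicting $\mathcal{F}_{1}\approx \mathcal{F}_{2}$.

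For $\tilde{f}(\nu X)\subseteq \nu Y$, suppose $\hat{f}(\mathcal{F})\approx \sigma_{y}$ for some $y\in Y$. Testing the definition of $\approx$ with $B=\{y\}\in \sigma_{y}$: any $L_{2}\subseteq \{y\}$ is bounded, so $L_{1}\sim L_{2}$ forces $L_{1}=L_{2}$, whence $y\in A$ for every $A\in \hat{f}(\mathcal{F})$, and by ultrafilter maximality $\hat{f}(\mathcal{F})=\sigma_{y}$. Since $\lambda_{Y}$ is proper and $Y$ is locally compact, $y$ admits a bounded closed neighbourhood $C$ with compact closure; then $C\in \hat{f}(\mathcal{F})$ gives $f^{-1}(C)\in \mathcal{F}$ by the correspondence, and properness of $f$ makes $f^{-1}(C)$ bounded. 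A finite-intersection-plus-compactness argument then forces $\mathcal{F}=\sigma_{x}$ for some $x\in X$, so $[\mathcal{F}]\in \phi_{X}(X)$. Finally $\hat{f}$ descends to a continuous $\tilde{f}:\frac{\gamma X}{\approx}\to \frac{\gamma Y}{\approx}$ by the universal property of the quotient, and uniqueness is immediate from density of $\phi_{X}(X)$ and Hausdorffness of the target.
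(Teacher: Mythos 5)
There is a genuine gap at the central lemma of your argument: the claimed correspondence $C\in\hat{f}(\mathcal{F})\Leftrightarrow f^{-1}(C)\in\mathcal{F}$ for closed $C$. Only the implication $f^{-1}(C)\in\mathcal{F}\Rightarrow C\in\hat{f}(\mathcal{F})$ holds for a general continuous map: the family $f_{*}(\mathcal{F})=\{C \text{ closed}: f^{-1}(C)\in\mathcal{F}\}$ is a prime closed filter, typically \emph{not} an ultrafilter, and $\hat{f}(\mathcal{F})$ is the (strictly larger) closed ultrafilter containing it. For the inclusion $(0,1)\hookrightarrow[0,1]$ and a closed ultrafilter $\mathcal{F}$ on $(0,1)$ containing every $(0,1/n]$, one has $\hat{f}(\mathcal{F})=\sigma_{0}\ni\{0\}$ while $f^{-1}(\{0\})=\emptyset\notin\mathcal{F}$. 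Your proposed derivation also does not go through: two open subsets of $\gamma X$ that agree on the dense set $\sigma(X)$ need not be equal, so continuity plus density only yields inclusions, not the identity $\hat{f}^{-1}(V^{*})=f^{-1}(V)^{*}$. Both of your main steps (preservation of $\approx$, and $\nu X\to\nu Y$) invoke precisely the false direction $C\in\hat{f}(\mathcal{F})\Rightarrow f^{-1}(C)\in\mathcal{F}$. The lemma can in fact be rescued under the proposition's hypotheses, but only by using properness: since bounded sets have compact closure and $f$ is a proper AS.R. mapping, $f$ is a topologically proper, hence closed, map into a locally compact Hausdorff space; then for $F\in\mathcal{F}$ the set $f(F)$ is closed and lies in $f_{*}(\mathcal{F})\subseteq\hat{f}(\mathcal{F})$, so any $C\in\hat{f}(\mathcal{F})$ must meet every $f(F)$, which forces $f^{-1}(C)\in\mathcal{F}$. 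None of this is in your sketch, which presents the correspondence as a purely formal Wallman fact.

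For comparison, the paper avoids the issue entirely: it never claims $f^{-1}(C)\in\mathcal{F}$ for $C\in\hat{f}(\mathcal{F})$. Instead, given $C\in\hat{f}(\mathcal{F})$ with asymptotic neighbourhood $U\supseteq C$ and an arbitrary $A\in f_{*}(\mathcal{F})$, it writes $A=(A\setminus U)\cup\overline{A\cap U}$, notes that $A\setminus U$ is disjoint from $C$ and hence cannot lie in the ultrafilter $\hat{f}(\mathcal{F})$, and uses primeness of $f_{*}(\mathcal{F})$ to conclude $\overline{A\cap U}\in f_{*}(\mathcal{F})$, i.e.\ $f^{-1}(\overline{A\cap U})\in\mathcal{F}$. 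Since $\overline{A\cap U}\subseteq\overline{U}$ and $\overline{U}\lambda U\lambda C$ by compatibility, all the boundedness and asymptotic-disjointness information you want about $C$ transfers to $\overline{A\cap U}$, and the rest of your AS.R. manipulations (which are essentially the same as the paper's and are correct) then go through. Either adopt that manoeuvre or prove the closed-map property of $f$ and derive your correspondence from it; as written, the proof does not stand.
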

\begin{proof}
 For $\mathcal{F}\in \gamma X$, define $f_{*}(\mathcal{F})=\{A\subseteq Y\mid A\, is\, closed\, and\, f^{-1}(A)\in \mathcal{F}\}$. Let $\hat{f}(\mathcal{F})$ be a unique closed ultrafilter that contains $f_{*}(\mathcal{F})$ (\cite{Wil} 16K). The map $\hat{f}:\gamma X\rightarrow \gamma Y$ is a continuous extension of $f$ (\cite{Wil} 19K). Assume that $\mathcal{F}\in \gamma X\setminus\sigma(X)$ and $\hat{f}(\mathcal{F})=\sigma_{y}$ for some $y\in Y$. So for all $A\in f_{*}(\mathcal{F})$ we have $y\in A$. Let $U\subseteq Y$ be an asymptotic neighbourhood of $y$. We have $A=(A\setminus U)\bigcup (\overline{A\bigcap U})$. Since $y$ is not in $A\setminus U$ and $f_{*}(\mathcal{F})$ is a prime closed filter so $\overline{A\bigcap U}\in f_{*}(\mathcal{F})$. Since $f$ is an AS.R. mapping thus $f^{-1}(\overline{A\bigcap U})$ is bounded and it contradicts $\mathcal{F}\in \gamma X\setminus\sigma(X)$. Thus $\hat{f}$ sends $\gamma X\setminus \sigma(X)$ to $\gamma Y\setminus \sigma(Y)$. Suppose that $\mathcal{F}_{1}\approx \mathcal{F}_{2}$. Let $C\in \hat{f}(\mathcal{F}_{1})$ and $A\in f_{*}(\mathcal{F}_{1})$. Assume that $U$ is an asymptotic neighbourhood of $C$. Since $A=(\overline{A\bigcap U})\bigcup (A\setminus U)$ and $(A\setminus U)\bigcap U= \emptyset$ so $\overline{A\bigcap U}\in f_{*}(\mathcal{F}_{1})$. Similarly one can show that for $D\in \hat{f}(\mathcal{F}_{2})$ and $B\in f_{*}(\mathcal{F}_{2})$ we have $\overline{B\bigcap V}\in f_{*}(\mathcal{F}_{2})$ for some asymptotic neighbourhood $V$ of $D$. So $f^{-1}(\overline{A\bigcap U})\in \mathcal{F}_{1}$ and $f^{-1}(\overline{B\bigcap V})\in \mathcal{F}_{2}$. Thus there are unbounded and asymptotically alike subsets $L_{1}\subseteq f^{-1}(\overline{A\bigcap U})$ and $L_{2}\subseteq f^{-1}(\overline{B\bigcap V})$. Since $f$ is an AS.R. mapping $f(L_{1})$ and $f(L_{2})$ are unbounded and asymptotically alike subsets of $\overline{A\bigcap U}$ and $\overline{B\bigcap V}$ respectively. Since $\lambda$ is compatible with the topology, \ref{6setare} shows that $C$ and $D$ are not asymptotically disjoint. Thus $\hat{f}(\mathcal{F}_{1})\approx \hat{f}(\mathcal{F}_{2})$. Therefore $\tilde{f}:\frac{\gamma X}{\approx} \rightarrow \frac{\gamma Y}{\approx}$ defined by $\tilde{f}([\mathcal{F}])=[\hat{f}(\mathcal{F})]$ is well defined. We have $\tilde{f}\circ \pi=\pi^{\prime} \circ \hat{f}$, where $\pi: \gamma X\rightarrow \frac{\gamma X}{\approx}$ and $\pi^{\prime}:\gamma Y \rightarrow \frac{\gamma Y}{\approx}$ are quotient maps. So $\tilde{f}$ is continuous and since $\hat{f}$ sends $\gamma X\setminus \sigma(X)$ to $\gamma Y\setminus \sigma(Y)$ it sends $\nu X$ to $\nu Y$.
\end{proof}

In the following propositions $\bar{A}$ denotes the closure of $A\subseteq X$ in $\frac{\gamma X}{\approx}$.
\begin{proposition}\label{10setare}
Let $X$ be a normal topological space and let $\mathcal{E}$ be a proper coarse structure on $X$. Assume that the AS.R. associated to $\mathcal{E}$ is asymptotically normal. If $A$ and $B$ are two asymptotically alike subsets of $X$ then $\bar{A}\bigcap \nu X=\bar{B}\bigcap \nu X$.
\end{proposition}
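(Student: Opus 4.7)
The plan is to pull back a generic point $[\mathcal{F}]\in \bar{A}\cap \nu X$ to a net $(a_i)_{i\in I}$ in $A$ with $\phi(a_i)\to [\mathcal{F}]$, use the entourage witnessing $A\lambda B$ to manufacture a partner net $(b_i)$ in $B$, and then argue that $\phi(b_i)\to [\mathcal{F}]$ as well; this yields $[\mathcal{F}]\in \overline{\phi(B)}=\bar B$, and the reverse inclusion follows by swapping the roles of $A$ and $B$. Because $\lambda$ is the AS.R. associated to $\mathcal{E}$ (Example \ref{coarse}) and $A\lambda B$, I would fix a symmetric entourage $E\in \mathcal{E}$ (replacing $E$ by $E\cup E^{-1}\cup \Delta$ if necessary) with $A\subseteq E(B)$ and $B\subseteq E(A)$, and for each $i$ choose $b_i\in B$ with $(a_i,b_i)\in E$.

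To prove $\phi(b_i)\to [\mathcal{F}]$, suppose not; the compact Hausdorff space $\frac{\gamma X}{\approx}$ then supplies a subnet $\phi(b_{i'})\to [\mathcal{F}']$ with $[\mathcal{F}']\neq [\mathcal{F}]$, and I would rule out both possibilities for $[\mathcal{F}']$. \emph{Case 1:} $[\mathcal{F}']=\phi(y)\in \phi(X)$. Since $\phi$ is a topological embedding and $\phi(X)$ is open in $\frac{\gamma X}{\approx}$, it follows that $b_{i'}\to y$ in $X$. A compact neighbourhood $K$ of $y$ eventually contains $b_{i'}$, forcing $a_{i'}\in E(K)$; properness and asymptotic connectedness of $\lambda$ render $K$ bounded, whence $E(K)$ is bounded (the $E$-image of a bounded set is bounded) and $\overline{E(K)}$ is compact. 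Extracting a convergent sub-subnet $a_{i''}\to z\in X$ gives $[\mathcal{F}]=\lim \phi(a_{i''})=\phi(z)\in \phi(X)$, contradicting $[\mathcal{F}]\in \nu X$.

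\emph{Case 2:} $[\mathcal{F}']\in \nu X$. Here I would lift to $\gamma X$: by its compactness, refine once more so that $\sigma_{a_{i''}}\to \mathcal{G}$ and $\sigma_{b_{i''}}\to \mathcal{H}$ in $\gamma X$. Continuity of the quotient map $\pi:\gamma X\to \frac{\gamma X}{\approx}$ forces $[\mathcal{G}]=[\mathcal{F}]$ and $[\mathcal{H}]=[\mathcal{F}']$, and because $[\mathcal{F}],[\mathcal{F}']\in \nu X$, neither $\mathcal{G}$ nor $\mathcal{H}$ is of the form $\sigma_x$, so both sit in $\gamma X\setminus \sigma(X)$. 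The tails $T_\beta=\{a_{i''}:i''\geq \beta\}\subseteq A$ and $S_\beta=\{b_{i''}:i''\geq \beta\}\subseteq B$ satisfy $T_\beta\subseteq E(S_\beta)$ and $S_\beta\subseteq E(T_\beta)$ by the pairing and symmetry of $E$, so $T_\beta\lambda S_\beta$. Proposition \ref{8setare} then delivers $\mathcal{G}\approx \mathcal{H}$, hence $[\mathcal{F}]=[\mathcal{F}']$, contradicting the choice of $[\mathcal{F}']$.

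The main obstacle is Case 1: a priori a subnet of the partners $\phi(b_{i'})$ could slip out of $\nu X$ into $\phi(X)$ while $\phi(a_i)$ still targets $[\mathcal{F}]\in \nu X$. This is precisely where properness of $\mathcal{E}$ earns its keep — it traps the $b_{i'}$ in a compact set, drags the corresponding $a_{i'}$ into a compact set via the entourage $E$, and forces $[\mathcal{F}]$ down into $\phi(X)$, which is impossible. Once Case 1 is eliminated, Case 2 is a clean invocation of Proposition \ref{8setare} applied to the lifted nets.
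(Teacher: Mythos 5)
Your proof is correct and follows essentially the same route as the paper's: use the entourage witnessing $A\lambda B$ to pair a net in $A$ converging to the given corona point with a partner net in $B$, extract convergent subnets in the compact space, and invoke Proposition \ref{8setare} to identify the limits modulo $\approx$. Your Case 1 --- ruling out, via properness and local compactness, that the partner net's limit falls into $\phi(X)$ --- addresses a point the paper's proof passes over silently (it never checks that the limit $\mathcal{H}$ of $\sigma_{y_{\alpha_i}}$ lies in $\gamma X\setminus\sigma(X)$ before applying \ref{8setare}), so your version is in fact slightly more complete, but the underlying argument is the same.
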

\begin{proof}
Let $[\mathcal{F}]\in \bar{A}\bigcap \nu X$. Let us denote by $D^{\prime}$ the closure of $D\subseteq X$ in $\gamma X$. Since $\pi: \gamma X\rightarrow \frac{\gamma X}{\approx}$ is a closed map so $(\bar{A}\bigcap \nu X)\subseteq \pi(A^{\prime})\bigcap \nu X$. Thus there is a ultrafilter $\mathcal{G}\in A^{\prime}$ such that $\mathcal{F}\approx \mathcal{G}$. There is a net $(x_{\alpha})_{\alpha \in I}$ in $A$ such that $\sigma_{x_{\alpha}}\rightarrow \mathcal{G}$. Since $A$ and $B$ are asymptotically alike, $A\subseteq E(B)$ and $B\subseteq E(A)$ for some $E\in \mathcal{E}$. For each $\alpha \in I$ we choose $y_{\alpha}\in B$ such that $(x_{\alpha},y_{\alpha})\in E$. The net $(\sigma_{y_{\alpha}})_{\alpha \in I}$ has a convergent subnet $(\sigma_{y_{\alpha_{i}}})_{i\in J}$. So $\sigma_{y_{\alpha_{i}}}\rightarrow \mathcal{H}$ for some $\mathcal{H}\in B^{\prime}$. Two nets $(\sigma_{y_{\alpha_{i}}})_{i\in J}$ and $(\sigma_{x_{\alpha_{i}}})_{i\in J}$ satisfy the assumption of \ref{8setare}. Thus $\mathcal{G}\approx \mathcal{H}$ and it leads to $[\mathcal{F}]\in \bar{B}\bigcap \nu X$.
\end{proof}
\begin{corollary}\label{11setare}
Assume the hypotheses of \ref{10setare}. Two subsets $A$ and $B$ of $X$ are asymptotically disjoint if and only if $$(\bar{A}\bigcap \nu X)\bigcap (\bar{B}\bigcap \nu X)=\emptyset$$.
\end{corollary}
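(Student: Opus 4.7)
The plan is to handle the two directions separately, with the ``only if'' direction being the delicate one. Suppose $A$ and $B$ are asymptotically disjoint and, for contradiction, that $[\mathcal{F}]\in(\bar{A}\cap\nu X)\cap(\bar{B}\cap\nu X)$. Because $\pi:\gamma X\to\frac{\gamma X}{\approx}$ is a closed map, I can argue as in \ref{10setare} and pick closed ultrafilters $\mathcal{G}\in\overline{\sigma(A)}^{\gamma X}$ and $\mathcal{H}\in\overline{\sigma(B)}^{\gamma X}$ with $\mathcal{G}\approx\mathcal{F}\approx\mathcal{H}$; transitivity of $\approx$ gives $\mathcal{G}\approx\mathcal{H}$. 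Both ultrafilters are free, for if $\mathcal{G}=\sigma_{x}$ then $\mathcal{F}\approx\sigma_{x}$ would place $[\mathcal{F}]$ in $\phi(X)$. From the Wallman characterisation one has $\bar{A}\in\mathcal{G}$ and $\bar{B}\in\mathcal{H}$, and compatibility together with \ref{zzz} shows $\bar{A},\bar{B}$ remain asymptotically disjoint.

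The key difficulty is that the relation $\sim$ from \ref{7setare} permits the trivial case $L_{1}=L_{2}$, so asymptotic disjointness of $\bar{A},\bar{B}$ will contradict $\mathcal{G}\approx\mathcal{H}$ only after I produce witnesses $A_{0}\in\mathcal{G}$ and $B_{0}\in\mathcal{H}$ that are genuinely disjoint in $X$. First I would rule out $\mathcal{G}=\mathcal{H}$: equality would force $\bar{A}\cap\bar{B}\in\mathcal{G}$, unbounded because $\mathcal{G}$ is free, and this set is asymptotically alike to itself while contained in both $\bar{A}$ and $\bar{B}$, violating their asymptotic disjointness. Normality of $X$ makes $\gamma X$ Hausdorff, so there exist basic opens $U^{*},V^{*}$ separating $\mathcal{G}$ from $\mathcal{H}$ with $U^{*}\cap V^{*}=\emptyset$, and this forces $U\cap V=\emptyset$ in $X$. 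Select closed $F_{1}\in\mathcal{G}$ with $F_{1}\subseteq U$ and closed $F_{2}\in\mathcal{H}$ with $F_{2}\subseteq V$, and set $A_{0}=\bar{A}\cap F_{1}\in\mathcal{G}$ and $B_{0}=\bar{B}\cap F_{2}\in\mathcal{H}$. Then $A_{0}\cap B_{0}=\emptyset$, so applying $\mathcal{G}\approx\mathcal{H}$ to $(A_{0},B_{0})$ excludes the $L_{1}=L_{2}$ case and yields unbounded $L_{1}\subseteq\bar{A}$, $L_{2}\subseteq\bar{B}$ with $L_{1}\lambda L_{2}$, contradicting the asymptotic disjointness of $\bar{A}$ and $\bar{B}$.

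For the converse, given unbounded $L_{1}\subseteq A$ and $L_{2}\subseteq B$ with $L_{1}\lambda L_{2}$, fix $E\in\mathcal{E}$ satisfying $L_{1}\subseteq E(L_{2})$ and $L_{2}\subseteq E(L_{1})$. Since $\lambda$ is proper, $L_{1}\setminus K\neq\emptyset$ for every compact $K\subseteq X$; I direct the family of compact subsets by inclusion and choose $x_{K}\in L_{1}\setminus K$ together with $y_{K}\in L_{2}$ for which $(y_{K},x_{K})\in E$. Using that $E(K)$ is bounded whenever $K$ is bounded in a proper coarse structure, $(y_{K})$ also eventually leaves every compact set. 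By compactness of $\gamma X$, passing to iterated subnets gives $\sigma_{x_{K_{\alpha_{\beta}}}}\to\mathcal{G}$ and $\sigma_{y_{K_{\alpha_{\beta}}}}\to\mathcal{H}$, and local compactness of $X$ combined with the ``escaping compacts'' property forces $\mathcal{G}$ and $\mathcal{H}$ to be free. The limits lie in the closures of $L_{1}$ and $L_{2}$ respectively, so $\bar{A}\in\mathcal{G}$ and $\bar{B}\in\mathcal{H}$. Finally, because each pair is joined by $E\cup E^{-1}\in\mathcal{E}$, the tails $T_{\gamma}$ and $S_{\gamma}$ satisfy $T_{\gamma}\lambda S_{\gamma}$; \ref{8setare} then gives $\mathcal{G}\approx\mathcal{H}$, producing $[\mathcal{G}]=[\mathcal{H}]\in\bar{A}\cap\bar{B}\cap\nu X$, which is the desired contradiction in the contrapositive.
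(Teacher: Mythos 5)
Your argument is correct, and its skeleton is the paper's: for the ``only if'' direction you lift $[\mathcal{F}]$ to closed ultrafilters $\mathcal{G}\in\overline{\sigma(A)}$ and $\mathcal{H}\in\overline{\sigma(B)}$ with $\mathcal{G}\approx\mathcal{H}$ and play $\bar{A}\in\mathcal{G}$ against $\bar{B}\in\mathcal{H}$, exactly as the paper does. The two places you deviate are both worth noting. First, you explicitly handle the degenerate clause in the definition of $\sim$ (the case $L_{1}=L_{2}$) by ruling out $\mathcal{G}=\mathcal{H}$ and then manufacturing genuinely disjoint members $A_{0}=\bar{A}\cap F_{1}\in\mathcal{G}$ and $B_{0}=\bar{B}\cap F_{2}\in\mathcal{H}$ via Hausdorff separation in $\gamma X$; the paper's proof jumps straight from ``$\mathcal{G}$ and $\mathcal{H}$ contain asymptotically disjoint sets'' to the conclusion, which is not literally enough to contradict $\mathcal{G}\approx\mathcal{H}$ when $\bar{A}\cap\bar{B}\neq\emptyset$, so your extra step closes a real (if minor) gap. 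Second, for the converse the paper simply applies Proposition \ref{10setare} to the unbounded asymptotically alike sets $L_{1}\subseteq A$, $L_{2}\subseteq B$ to get $\bar{L_{1}}\cap\nu X=\bar{L_{2}}\cap\nu X\neq\emptyset$, whereas you re-run the underlying net construction (points escaping compacta, convergent subnets, Proposition \ref{8setare}) from scratch; your version is more self-contained but proves nothing the citation of \ref{10setare} would not give more quickly --- indeed it also makes explicit the nonemptiness $\bar{L_{1}}\cap\nu X\neq\emptyset$, which the paper asserts without comment. In short: same route, with two of the paper's tacit steps spelled out, one of which genuinely needed spelling out.
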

\begin{proof}
Suppose that $A,B\subseteq X$ and $(\bar{A}\bigcap \nu X)\bigcap (\bar{B}\bigcap \nu X)=\emptyset$. Assume that, on the contrary, there are unbounded and asymptotically alike subsets $L_{1}\subseteq A$ and $L_{2}\subseteq B$. By \ref{10setare}, $(\bar{L_{1}}\bigcap \nu X)=(\bar{L_{2}}\bigcap \nu X)\neq \emptyset$. Since $\bar{L_{1}}\subseteq \bar{A}$ and $\bar{L_{2}}\subseteq \bar{B}$ so $(\bar{A}\bigcap \nu X)\bigcap (\bar{B}\bigcap \nu X)\neq \emptyset$, a contradiction.\\
To prove the converse assume that $A$ and $B$ are asymptotically disjoint. Let $[\mathcal{F}]\in \bar{A}\bigcap \nu X$. As in the previous proposition, let us denote by $D^{\prime}$ the closure of $D\subseteq X$ in $\gamma X$. So there is $\mathcal{G}\in A^{\prime}$ such that $\mathcal{G}\approx \mathcal{F}$ . Let $\mathcal{H}\in B^{\prime}$. The closures of $A$ and $B$ in topological space $X$ are in $\mathcal{G}$ and $\mathcal{H}$ respectively. Since $\lambda$ is an AS.R. compatible with the topology, $\mathcal{G}$ and $\mathcal{H}$ contain asymptotically disjoint sets. Therefore $[\mathcal{F}]$ is not in $\bar{B}\bigcap \nu X$.
\end{proof}
Now we will prove the converse of \ref{10setare} for metric spaces.
\begin{corollary}
Assume that $(X,d)$ is proper metric space. For two subsets $A$ and $B$ of $X$ if $\bar{A}\bigcap \nu X=\bar{B}\bigcap \nu X$ then $A$ and $B$ are asymptotically alike.
\end{corollary}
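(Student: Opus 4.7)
The plan is to argue by contrapositive: assume $A\bar{\lambda}B$ and produce an element of $\bar{A}\cap \nu X$ that does not lie in $\bar{B}\cap \nu X$. The key inputs will be Corollary \ref{11setare} (characterization of asymptotic disjointness via traces on $\nu X$) and properness of $(X,d)$ (which forces any unbounded subset to reach $\nu X$ in the asymptotic compactification).

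The first step is extraction of a witness. From $d_H(A,B)=\infty$, either there is a sequence $(a_n)\subseteq A$ with $d(a_n,B)\ge n$ or a sequence $(b_n)\subseteq B$ with $d(b_n,A)\ge n$; by symmetry I take the former and set $L=\{a_n\mid n\in\mathbb{N}\}$. Since $B\neq\emptyset$, every bounded subset of $X$ has finite distance to $B$, so $L$ must be unbounded. Note that here one needs $B$ nonempty; the degenerate cases with $A$ or $B$ empty need to be disposed of separately, but those collapse via the convention $d_H(\emptyset,C)=\infty$ for nonempty $C$.

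Next I would check that $L$ and $B$ are asymptotically disjoint. If there were unbounded subsets $M_1\subseteq L$ and $M_2\subseteq B$ with $M_1\lambda M_2$, a fixed constant $r$ would satisfy $M_1\subseteq \textbf{B}(M_2,r)$, so $d(a_{n_k},B)<r$ for every $a_{n_k}\in M_1$, forcing $n_k<r$ and contradicting unboundedness of $M_1$. With this in hand, Corollary \ref{11setare} delivers $(\bar{L}\cap \nu X)\cap(\bar{B}\cap \nu X)=\emptyset$.

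It remains to verify that $\bar{L}\cap \nu X$ is nonempty, which is where properness of the metric enters. The closure $\bar{L}$ in the compact space $\tfrac{\gamma X}{\approx}$ is compact; if it were contained in $\phi(X)$, then $\phi^{-1}(\bar{L})$ would be a compact, hence bounded, subset of $X$ containing $L$, contradicting Step 1. Picking any $[\mathcal{F}]\in \bar{L}\cap \nu X$ gives a point of $\bar{A}\cap \nu X$ (since $L\subseteq A$) which cannot lie in $\bar{B}\cap \nu X$ by the disjointness above. Thus $\bar{A}\cap \nu X\neq \bar{B}\cap \nu X$, completing the contrapositive. The main obstacle I anticipate is the nonemptiness of $\bar{L}\cap \nu X$: the previous corollary and proposition handle the disjointness side abstractly, but proving that an unbounded set truly accumulates on $\nu X$ is precisely where proper metrizability is essential and where a non-proper AS.R. would fail.
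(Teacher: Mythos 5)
Your proof is correct and takes essentially the same route as the paper's: extract $L=\{a_{n}\mid n\in \mathbb{N}\}\subseteq A$ with $d(a_{n},B)\geq n$, observe that $L$ and $B$ are asymptotically disjoint, and conclude via Corollary \ref{11setare} together with $\bar{L}\bigcap \nu X\subseteq \bar{A}\bigcap \nu X$. The only difference is that you spell out the step the paper leaves implicit --- that properness forces $\bar{L}\bigcap \nu X\neq \emptyset$, without which there is no contradiction --- and that is a worthwhile addition rather than a deviation.
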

\begin{proof}
Suppose that $A$ and $B$ are not asymptotically alike. We can assume that (without loss of generality) for each $n\in \mathbb{N}$, $A$ is not a subset of $\textbf{B}(B,n)$. For each $n\in \mathbb{N}$ choose $a_{n}\in A$ such that $d(a_{n},B)\geq n$. Let $L=\{a_{n}\mid n\in \mathbb{N}\}$. Clearly $L$ and $B$ are asymptotically disjoint. Thus by \ref{11setare}, $(\bar{L}\bigcap \nu X)\bigcap (\bar{B}\bigcap \nu X)=\emptyset$. It is a contradiction, since $\bar{L}\bigcap \nu X\subseteq \bar{A}\bigcap \nu X$.
\end{proof}
In the previous section we showed that two different coarse structures may induce the same asymptotic resemblance relation. The following proposition shows that such coarse structures have the same Higson compactifications too.
\begin{proposition}
Let $\mathcal{E}_{1}$ and $\mathcal{E}_{2}$ be two proper and connected coarse structures on a Hausdorff topological space $X$. If $\mathcal{E}_{1}$ and $\mathcal{E}_{2}$ induce the same AS.R. $\lambda$ on $X$ then $(X,\mathcal{E}_{1})$ and $(X,\mathcal{E}_{2})$ have homeomorphic Higson compactification.
\end{proposition}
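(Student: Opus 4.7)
The plan is to exploit Corollary \ref{9setare}, which identifies the Higson compactification of a proper coarse space (under the standing normality/asymptotic-normality hypotheses) with the asymptotic compactification $\frac{\gamma X}{\approx}$. The key observation is that $\frac{\gamma X}{\approx}$ is built out of the topology of $X$ and the equivalence relation $\approx$ on $\gamma X$, and inspection of Definition \ref{7setare} shows that $\approx$ refers only to the relation $\sim$ on nonempty subsets, which in turn is defined entirely in terms of the AS.R.\ $\lambda$ (asymptotic alikeness plus boundedness, and boundedness is itself determined by $\lambda$). Consequently the quotient space $\frac{\gamma X}{\approx}$ is an invariant of the pair $(X,\lambda)$, not of any particular coarse structure inducing $\lambda$.

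First I would spell this out explicitly: if $\mathcal{E}_1$ and $\mathcal{E}_2$ both induce $\lambda$, then the relation $\approx$ on $\gamma X$ computed from $\mathcal{E}_1$ coincides verbatim with the one computed from $\mathcal{E}_2$, because both are computed from $\lambda$. Hence the topological quotients are literally identical, giving a canonical homeomorphism
\[
\frac{\gamma X}{\approx_{\mathcal{E}_1}} \;=\; \frac{\gamma X}{\approx_{\mathcal{E}_2}}.
\]
Next I would apply Corollary \ref{9setare} to each coarse structure separately: it produces homeomorphisms $h_i X \cong \frac{\gamma X}{\approx_{\mathcal{E}_i}}$ for $i=1,2$, both fixing $X$ pointwise (since they come from extending the identity map on $X$). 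Composing one with the inverse of the other yields the desired homeomorphism $h_1 X \cong h_2 X$.

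The only nontrivial point is verifying that the standing hypotheses of \ref{9setare} are met for both coarse structures, namely that $\lambda$ is asymptotically normal (so that $\approx$ is an equivalence relation and the quotient is Hausdorff) and that the properness assumptions needed for the Higson construction hold. Properness transfers to $\lambda$ by the remark preceding the asymptotic compactification proposition, and asymptotic normality is a property of $\lambda$ alone, so it holds for both $\mathcal{E}_1$ and $\mathcal{E}_2$ as soon as it holds for one. I would therefore add (or invoke as an implicit standing hypothesis) that $\lambda$ is asymptotically normal and $X$ is normal; with these in place the argument reduces to the clean two-line application above. The main potential obstacle is precisely this bookkeeping of hypotheses, since the statement as written does not repeat them, but once they are assumed (or verified from context) no further work is needed beyond the observation that $\frac{\gamma X}{\approx}$ is intrinsic to $\lambda$.
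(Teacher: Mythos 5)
Your route is genuinely different from the paper's, and as a proof of the proposition \emph{as stated} it has a real gap: Corollary \ref{9setare} (and the whole asymptotic-compactification machinery behind it, from Proposition \ref{bbbb} onward) requires $X$ to be a \emph{normal} topological space and $\lambda$ to be \emph{asymptotically normal}, whereas the proposition assumes only that $X$ is Hausdorff and that $\mathcal{E}_1,\mathcal{E}_2$ are proper and connected. Neither missing hypothesis comes for free. Properness gives local compactness, but locally compact Hausdorff spaces need not be normal; and asymptotic normality is established in the paper only for metric spaces and for topological coarse structures of first countable compactifications, not for arbitrary proper coarse structures. Without these hypotheses the relation $\approx$ need not even be an equivalence relation and $\frac{\gamma X}{\approx}$ need not be a Hausdorff compactification, so your ``canonical homeomorphism'' has nothing to be applied to. You do flag this, but adding the hypotheses changes the statement: you prove a special case, not the proposition. (Your core observation --- that $\approx$ depends only on $\lambda$, so the asymptotic compactification is intrinsic to $(X,\lambda)$ --- is correct and is exactly why the argument works whenever the extra hypotheses do hold.)

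The paper avoids all of this by working at the level of function algebras: it shows directly that $C_{h_1}(X)=C_{h_2}(X)$, i.e.\ that a Higson function for $\mathcal{E}_1$ is a Higson function for $\mathcal{E}_2$ and conversely. Given $E\in\mathcal{E}_2$, the hypothesis $\lambda_{\mathcal{E}_1}=\lambda_{\mathcal{E}_2}$ applied to $\pi_1(E)\,\lambda\,\pi_2(E)$ produces an $F\in\mathcal{E}_1$ controlling the pairs of $E$, and one enlarges the compact set to $\overline{E(K)\cup E^{-1}(K)\cup K}$ (using properness of $\mathcal{E}_2$) to verify the Higson condition for $E$. The homeomorphism of $h_1X$ and $h_2X$ then follows from Gelfand--Naimark. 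This buys exactly the generality your argument loses: no normality of $X$ and no asymptotic normality of $\lambda$ are needed. If you want to salvage your approach, you must either add those hypotheses to the statement or supply an argument that they follow from the ones given; otherwise the function-algebra route is the one that actually proves the proposition.
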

\begin{proof}
Let $C_{h_{1}}(X)$ and $C_{h_{2}}(X)$ denote the family of all Higson functions on $(X,\mathcal{E}_{1})$ and $(X,\mathcal{E}_{2})$ respectively. Suppose that $f\in C_{h_{1}}(X)$. Let $E\in \mathcal{E}_{2}$ and $\epsilon >0$. We have $\pi_{1}(E)\lambda \pi_{2}(E)$, where $\pi_{1}$ and $\pi_{2}$ are projection functions on first and second coordinate respectively. So there is a $F\in \mathcal{E}_{1}$ such that $\pi_{E}\subseteq F(\pi_{2}(E))$ and $\pi_{2}(E)\subseteq F(\pi_{1}(E))$. Since $f\in C_{h_{1}}(X)$ there exists a compact subset $K$ of $X$ such that $\mid f(x)-f(y)\mid <\frac{\epsilon}{2}$ for all $(x,y)\in F\setminus K\times K$. Let $L=\overline{E(K)\bigcup E^{-1}(K)\bigcup K}$. Since $\mathcal{E}_{2}$ is proper so $L$ is a compact subset of $X$. Assume that $(x,y)\in E\setminus L\times L$. Suppose that $x$ does not belong to $L$ so it is not in $K$. Also since $x$ does not belong to $E^{-1}(K)$ so $y$ is not in $K$. There are $x^{\prime}\in \pi_{1}(E)$ and $y^{\prime}\in \pi_{2}(E)$ such that $(x,y^{\prime})\in F$ and $(x^{\prime},y)\in F$. Since $x$ and $y$ are not in $K$ so $(x,y^{\prime})$ and $(x^{\prime},y)$ are in $F\setminus K\times K$. Thus $\mid f(x)-f(y)\mid \leq \mid f(x)-f(y^{\prime})\mid +\mid f(x^{\prime})-f(y)\mid < \epsilon$. Similar arguments hold if $f$ does not belong to $F$. Therefore $f\in C_{h_{2}}(X)$ and it shows that $C_{h_{1}}(X)\subseteq C_{h_{2}}(X)$. Similarly one can shows that $C_{h_{2}}(X)\subseteq C_{h_{1}}(X)$.
\end{proof}
\section{Asymptotic compactification and proximity}
Let $(X,\mathcal{T})$ be a topological space and $\lambda$ be an AS.R. compatible with $\mathcal{T}$. Suppose that the relation $\sim$ is as \ref{7setare}. For two subsets $A$ and $B$ of $X$, define $A\delta_{\lambda} B$ if there are $L_{1}\subseteq \bar{A}$ and $L_{2}\subseteq \bar{B}$ such that $L_{1}\sim L_{2}$.
\begin{proposition}
Let $(X,\mathcal{T})$ be a normal topological space and let $\lambda$ be a proper and asymptotically normal AS.R. on $X$. Then $\delta_{\lambda}$ is a separated proximity on $X$ and it is compatible with $\mathcal{T}$.
\end{proposition}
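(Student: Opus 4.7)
The plan is to verify, one by one, the five axioms of Definition \ref{prox} together with separation and compatibility with $\mathcal{T}$.

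Axioms (i)--(iii) and separation/compatibility are routine. Symmetry of $\delta_{\lambda}$ is inherited from that of $\sim$; axiom (ii) is vacuous because $\overline{\emptyset}=\emptyset$ admits no nonempty $L_{1}$; axiom (iii) is witnessed by $\{x\}\sim\{x\}$ for any $x\in A\cap B$. Separation uses Hausdorffness of the normal space $X$: $\{x\}=\overline{\{x\}}$ is bounded, so a witness $L_{1}\subseteq\{x\}$, $L_{2}\subseteq\{y\}$ with $L_{1}\sim L_{2}$ must satisfy $L_{1}=L_{2}$, forcing $x=y$. Compatibility with $\mathcal{T}$ is similar: $a\in\bar A$ gives $\{a\}\sim\{a\}$, and conversely any witness $L_{1}\subseteq\overline{\{a\}}=\{a\}$ is bounded, so $L_{1}=L_{2}\subseteq\bar A$, giving $a\in\bar A$.

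For axiom (iv), the nontrivial direction assumes $L_{1}\subseteq\bar A$ and $L_{2}\subseteq\overline{B\cup C}=\bar B\cup\bar C$ with $L_{1}\sim L_{2}$, and splits $L_{2}=(L_{2}\cap\bar B)\cup(L_{2}\cap\bar C)$. If one piece is empty the conclusion is immediate; if $L_{1}=L_{2}$ we pick a common point with $\bar B$ or $\bar C$; otherwise Proposition \ref{2setare}(ii) splits $L_{1}=L_{1}^{B}\cup L_{1}^{C}$ with $L_{1}^{B}\lambda(L_{2}\cap\bar B)$ and $L_{1}^{C}\lambda(L_{2}\cap\bar C)$. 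Since a finite union of bounded sets is bounded (by asymptotic connectedness and Proposition \ref{2setare}(i)), one of $L_{1}^{B},L_{1}^{C}$ is unbounded, and its asymptotically alike counterpart is unbounded as well, yielding $A\delta_{\lambda}B$ or $A\delta_{\lambda}C$. The reverse direction is immediate.

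The main obstacle is axiom (v). Suppose $A\bar\delta_{\lambda}B$, so $\bar A\cap\bar B=\emptyset$ and $\bar A,\bar B$ are asymptotically disjoint. Asymptotic normality supplies $X_{1},X_{2}$ with $X=X_{1}\cup X_{2}$ such that $\bar A,X_{1}$ and $\bar B,X_{2}$ are asymptotically disjoint. I would adjust in two stages: first replace $X_{1}$ by $X_{1}\cup\bar B$ and $X_{2}$ by $X_{2}\cup\bar A$, using Proposition \ref{2setare}(ii) together with $\bar A,\bar B$ being asymptotically disjoint to show the required asymptotic disjointnesses are preserved; then replace each enlarged $X_{i}$ by an open asymptotic neighbourhood (available from compatibility), with Lemma \ref{zzz} preserving asymptotic disjointness. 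The result is open sets $X_{1},X_{2}$ still covering $X$ with $\bar A\subseteq X_{2}$ and $\bar B\subseteq X_{1}$. Then normality of $X$ applied to the disjoint closed pair $\bar A$ and $\bar B\cup(X\setminus X_{2})$ produces, via Urysohn, a closed $C$ with $\bar A\subseteq\operatorname{int}(C)\subseteq C\subseteq X_{2}\setminus\bar B$. I set $E=X_{1}\setminus\operatorname{int}(C)$.

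To check $A\bar\delta_{\lambda}E$: topologically, $\bar E\subseteq X\setminus\operatorname{int}(C)$ is disjoint from $\bar A\subseteq\operatorname{int}(C)$; asymptotically, $\bar E\subseteq\overline{X_{1}}$ is asymptotically disjoint from $\bar A$ via compatibility ($X_{1}\lambda\overline{X_{1}}$) and Lemma \ref{zzz}. To check $(X\setminus E)\bar\delta_{\lambda}B$: openness of $X_{1}$ gives $\overline{X\setminus E}\subseteq(X\setminus X_{1})\cup C\subseteq X_{2}$, which is topologically disjoint from $\bar B$ (as $\bar B\subseteq X_{1}$ and $C\cap\bar B=\emptyset$) and asymptotically disjoint from $\bar B$ because $X_{2}$ is. The hard part is orchestrating the simultaneous topological and asymptotic refinements of $X_{1},X_{2}$ so that a single $E$ achieves both separations at once; once the open enlargements are in place, the rest is a short chase through Lemma \ref{zzz} and the fact that $X\setminus X_{1}\subseteq X_{2}$.
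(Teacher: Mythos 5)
Your proposal is correct, and for axioms (i)--(iv), separation, and compatibility it runs essentially parallel to the paper's argument (the paper handles (iv) by picking the unbounded piece of $L_{2}\cap\bar B$ or $L_{2}\cap\bar C$ and pulling back an unbounded partner inside $L_{1}$ via Proposition \ref{6setare}, which is the mirror image of your splitting of $L_{1}$ via Proposition \ref{2setare}(ii)). The real divergence is in axiom (v). The paper's key observation is that asymptotic disjointness of $X_{1}$ from $\bar A$ forces $\bar{X_{1}}\cap\bar A$ to be \emph{bounded} (an unbounded intersection would be asymptotically alike to itself), and likewise for $\bar{X_{2}}\cap\bar B$; it then chooses bounded asymptotic neighbourhoods $U\supseteq\bar{X_{1}}\cap\bar A$ and $V\supseteq\bar{X_{2}}\cap\bar B$ with $\bar U\cap\bar B=\emptyset$, $\bar V\cap\bar A=\emptyset$, and sets $E=(X_{1}\setminus U)\cup V$; since $U$ and $V$ are bounded, all the asymptotic-disjointness checks become trivial. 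You avoid this observation entirely and instead enlarge $X_{1},X_{2}$ in two stages (absorbing $\bar B$ and $\bar A$, then passing to open asymptotic neighbourhoods) and finish with a Urysohn-type separation of $\bar A$ from $\bar B\cup(X\setminus X_{2})$, taking $E=X_{1}\setminus\operatorname{int}(C)$. Both constructions are sound: yours is more uniform in that it never needs to identify a bounded set, but it costs an extra normality application and the bookkeeping of keeping the cover property and both asymptotic disjointnesses through the enlargements (which you do handle correctly via Proposition \ref{2setare}(ii) and Lemma \ref{zzz}); the paper's is shorter once the boundedness of $\bar{X_{1}}\cap\bar A$ is noticed, since bounded sets are asymptotically disjoint from everything. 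You might still record that boundedness fact -- it is the one idea in the paper's proof that your route does not reproduce.
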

\begin{proof}
The relation $\delta_{\lambda}$ clearly satisfies properties (i), (ii) and (iii) of \ref{prox}. Assume that $A\delta_{\lambda} (B\bigcup C)$. So there are $L_{1}\subseteq \bar{A}$ and $L_{2}\subseteq \overline{B\bigcup C}$ such that $L_{1}\sim L_{2}$. If $L_{1}=L_{2}$ then $\bar{A}\bigcap(\overline{B\bigcup C})\neq \emptyset$ and it leads to $A\delta_{\lambda} B$ or $A\delta_{\lambda} C$ clearly. If $L_{1}$ and $L_{2}$ are two unbounded asymptotically alike subsets of $X$, then $L_{2}\bigcap \bar{B}$ or $L_{2}\bigcap \bar{C}$ should be unbounded. Assume that $L_{2}\bigcap \bar{B}$ is unbounded. So there is unbounded subset $L_{3}\subseteq L_{1}$ such that $(L_{2}\bigcap \bar{B})\lambda L_{3}$ by \ref{6setare}. Thus $A\delta_{\lambda} B$. If $A\delta_{\lambda} B$ it is straightforward to show that $A\delta_{\lambda} (B\bigcup C)$ for all $C\subseteq X$. Now assume that $A,B\subseteq X$ and $A\bar{\delta_{\lambda}} B$. So $\bar{A}$ and $\bar{B}$ are two disjoint and asymptotically disjoint subsets of $X$. We choose $X_{1}\subseteq X$and $X_{2}\subseteq X$ such that $X=X_{1}\bigcup X_{2}$ and they are asymptotically disjoint from $\bar{A}$ and $\bar{B}$ respectively. Since $(X,\mathcal{T})$ is a normal topological space and $\lambda$ is compatible with $\mathcal{T}$ we can find asymptotic neighbourhoods $\bar{X_{1}}\bigcap \bar{A}\subseteq U$ and $\bar{X_{2}}\bigcap \bar{B}\subseteq V$ such that $\bar{U}\bigcap \bar{B}=\emptyset$ and $\bar{V}\bigcap \bar{A}=\emptyset$. Let $E=(X_{1}\setminus U)\bigcup V$. Since $X_{1}$ and $\bar{A}$ are asymptotically disjoint, $\bar{X_{1}}\bigcap \bar{A}$ is bounded and it shows that $U$ is bounded. Similarly $V$ is bounded. Thus $\bar{A}$ and $\bar{E}$ are disjoint and they are asymptotically disjoint too since $V$ is bounded. Therefore $A\bar{\delta_{\lambda}} E$. Similarly one can show that $\bar{B}$ and $\overline{X\setminus E}\subseteq \overline{(X_{2}\setminus V)\bigcup U}$ are disjoint and asymptotically disjoint and it leads to $(X\setminus E)\bar{\delta_{\lambda}} B$. Since $\lambda$ is proper one can easily verify that $\delta_{\lambda}$ is compatible with the topology.
\end{proof}
Let us recall that on a separated proximity space $(X,\delta)$, $\mathfrak{X}$ denotes the family of all clusters in $X$. For  two subsets $\mathfrak{M}$ and $\mathfrak{N}$ of $X$, $\mathfrak{M}\delta^{*} \mathfrak{N}$ means that if $A\subseteq X$ absorbs $\mathfrak{M}$ and $B\subseteq X$ absorbs $\mathfrak{N}$ then $A\delta B$. A subset $A$ of $X$ absorbs $\mathfrak{M}\subseteq \mathcal{X}$ means that $A\in \mathcal{C}$ for all $\mathcal{C} \in \mathfrak{M}$. The proximity space $(\mathfrak{X},\delta^{*})$ is called the Smirnov compactification of $X$.
\begin{proposition}
Let $(X,\mathcal{T})$ be a normal topological space and let $\lambda$ be a proper and asymptotically normal AS.R. on $X$. Then $\frac{\gamma X}{\approx}$ and the Smirnov compactification $(\mathfrak{X},\delta_{\lambda}^{*})$ are homeomorphic.
\end{proposition}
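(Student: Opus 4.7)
The plan is to construct the homeomorphism $\Phi:\gamma X/\approx\,\to\,\mathfrak{X}$ explicitly by sending $[\mathcal{F}]$ to the cluster $C(\mathcal{F}):=\{A\subseteq X : A\,\delta_\lambda F \text{ for every } F\in\mathcal{F}\}$, which is a cluster in $(X,\delta_\lambda)$ by standard proximity-space theory (\cite{Nai}). The crucial observation driving everything is that for two closed ultrafilters $\mathcal{F}_1,\mathcal{F}_2$, the relation $\mathcal{F}_1\approx\mathcal{F}_2$ unfolds, since the members of each $\mathcal{F}_i$ are already closed in $X$, into the much cleaner statement ``$A\,\delta_\lambda B$ for every $A\in\mathcal{F}_1$ and every $B\in\mathcal{F}_2$'', i.e.\ $\mathcal{F}_1\subseteq C(\mathcal{F}_2)$ and $\mathcal{F}_2\subseteq C(\mathcal{F}_1)$.

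For well-definedness of $\Phi$, suppose $\mathcal{F}_1\approx\mathcal{F}_2$. Then $\mathcal{F}_2\subseteq C(\mathcal{F}_1)$; for any $A\in C(\mathcal{F}_1)$ and any $G\in\mathcal{F}_2$, both $A$ and $G$ lie in the cluster $C(\mathcal{F}_1)$, so $A\,\delta_\lambda G$ by the cluster axiom that any two of its elements are proximal. Hence $A\in C(\mathcal{F}_2)$, and symmetry yields $C(\mathcal{F}_1)=C(\mathcal{F}_2)$. Injectivity runs in reverse: since a closed ultrafilter has the finite intersection property, every two of its elements meet and are therefore $\delta_\lambda$-close, so $\mathcal{F}\subseteq C(\mathcal{F})$; then $C(\mathcal{F}_1)=C(\mathcal{F}_2)$ forces $\mathcal{F}_1\subseteq C(\mathcal{F}_2)$, which is exactly $\mathcal{F}_1\approx\mathcal{F}_2$. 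Surjectivity is the standard Smirnov fact that every cluster in a separated proximity space is generated by some closed ultrafilter.

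For continuity it suffices to check that for any closed $A\subseteq X$ the preimage of the basic closed set $\hat A=\{\mathcal{C}\in\mathfrak{X}:A\in\mathcal{C}\}$ is closed in $\gamma X/\approx$. Because $\pi:\gamma X\to\gamma X/\approx$ is closed (being continuous from a compact space to a Hausdorff space), the image $\pi(\{\mathcal{F}\in\gamma X:A\in\mathcal{F}\})$ coincides with $\overline{\phi(A)}$ in $\gamma X/\approx$. The inclusion $\pi(\{\mathcal{F}:A\in\mathcal{F}\})\subseteq\Phi^{-1}(\hat A)$ is immediate since $A\in\mathcal{F}$ gives $A\in C(\mathcal{F})$ trivially; the reverse uses the standard refinement that whenever a closed set $A$ belongs to a cluster $\mathcal{C}$, there is a closed ultrafilter $\mathcal{F}\subseteq\mathcal{C}$ with $A\in\mathcal{F}$ and $C(\mathcal{F})=\mathcal{C}$, so $[\mathcal{F}]\in\pi(\{A\in\mathcal{F}\})$.

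At this point $\Phi$ is a continuous bijection from the compact space $\gamma X/\approx$ onto the Hausdorff Smirnov compactification $\mathfrak{X}$, hence a homeomorphism, and compatibility with the two canonical embeddings of $X$ (i.e.\ that $\Phi([\sigma_x])$ is the point-cluster at $x$) is immediate from the definitions of $C$, $\delta_\lambda$ and the compatibility of $\lambda$ with $\mathcal{T}$. The main obstacle is marshalling the three standard proximity-theoretic facts about clusters invoked above---that $C(\mathcal{F})$ is always a cluster, that any two elements of a cluster are proximal, and that every cluster is realized by a closed ultrafilter that can be chosen to contain a prescribed member---in the present setting; once these are in hand, the rest is bookkeeping between the quotient topology of $\gamma X/\approx$ and the cluster topology on $\mathfrak{X}$.
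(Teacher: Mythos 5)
Your proposal is correct and follows essentially the same route as the paper: the same map $[\mathcal{F}]\mapsto\{A\subseteq X : A\,\delta_{\lambda}\,F \text{ for all } F\in\mathcal{F}\}$, the same observation that $\approx$ reduces to mutual $\delta_{\lambda}$-nearness of the (already closed) members of the two ultrafilters, the same appeal to Naimpally's cluster theory for well-definedness, injectivity and surjectivity, and the same compact-to-Hausdorff conclusion. The only divergence is the continuity step, where you pull back the basic closed sets $\hat{A}$ using the closedness of $\pi$ and the cluster-realization lemma, while the paper instead verifies directly that $\psi\circ\pi$ preserves adherence to absorption sets via asymptotic neighbourhoods; both are routine and rest on the same standard facts.
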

\begin{proof}
Let $\mathcal{F}\in \gamma X$ and let $\tilde{\mathcal{F}}=\{A\subseteq X\mid A\delta_{\lambda} B\,for\,all\,B\in \mathcal{F}\}$. The family $\tilde{\mathcal{F}}$ is a cluster in $X$ (\cite{Nai} theorem 5.8). Define $\psi: \frac{\gamma X}{\approx}\rightarrow \mathfrak{X}$ by $\psi([\mathcal{F}])=\tilde{\mathcal{F}}$ for all $\mathcal{F}\in \gamma X$. For $\mathcal{F},\mathcal{G}\in \gamma X$ if $\mathcal{F}\approx \mathcal{G}$ then $A\delta_{\lambda} B$ for all $A\in \mathcal{F}$ and all $B\in \mathcal{G}$, so $\tilde{\mathcal{F}}=\tilde{\mathcal{G}}$. Thus the map $\psi$ is well defined. It is straightforward to show that $\psi$ is one to one and by using 5.8 of \cite{Nai} one can easily shows that it is surjective too. Suppose that $\mathfrak{M} \subseteq \gamma X$ and $\mathcal{F}\in \bar{\mathfrak{M}}$. Let $A$ be a subset of $X$ such that $A\in \psi(\mathcal{G})$ for all $\mathcal{G}\in \mathfrak{M}$. We claim that $A\in \psi(\mathcal{F})$. Suppose that, contrary to our claim, $A$ is not in $\psi(\mathcal{F})$. So there exists $B\in \mathcal{F}$ such that $\bar{A}$ and $\bar{B}$ are disjoint and asymptotically disjoint. We choose asymptotic neighbourhood $\bar{B}\subseteq U$ such that $\bar{A}\bigcap \bar{U}=\emptyset$. The set $U^{*}$ is an open subset of $\gamma X$ containing $\mathcal{F}$. Thus there is $\mathcal{G}\in \mathfrak{M}$ and $C\in \mathcal{G}$ such that $C\subseteq U$. It shows that $\bar{C}$ and $\bar{A}$ are disjoint and asymptotically disjoint. So $A\bar{\delta_{\lambda}} C$ and it contradicts $A\in \psi(\mathcal{G})$. Therefore $\psi(\mathcal{F})\in \overline{\psi(\mathfrak{M})}$. It shows that $\psi\circ \pi$ is continuous, where $\pi:\gamma X\rightarrow \frac{\gamma X}{\approx}$ is the quotient map. So $\psi$ is continuous and Since $\frac{\gamma X}{\approx}$ is compact and Hausdorff, it is a homeomorphism too.
\end{proof}

\section{Asymptotic dimension}
Let $\mathcal{U}$ be a family of subsets of a set $X$ and let $S_{\mathcal{U}}=\bigcup_{U\in \mathcal{U}}U\times U$. For two subsets $A$ and $B$ of $X$, define $A\sim_{\mathcal{U}}B$ if $A\subseteq S_{\mathcal{U}}(B)$ and $B\subseteq S_{\mathcal{U}}(A)$.
\begin{definition}
We call a family $\mathcal{U}$ of subsets of an AS.R. space $(X,\lambda)$ \emph{uniformly bounded}, if\\
i) each $U\in \mathcal{U}$ is bounded.\\
ii) $A\sim_{\mathcal{U}}B$ implies $A\lambda B$, for all $A,B\subseteq X$.
\end{definition}
The following proposition shows that if $\lambda$ is the AS.R. associated to a metric $d$ on a set $X$, then the above definition coincides with uniformly boundedness with respect to $d$.
\begin{proposition}\label{uni}
Let $(X,d)$ be a metric space and let $\lambda$ be the AS.R. associated to $d$. A family $\mathcal{U}$ of subsets of $X$ is uniformly bounded if and only if there is $k>0$ such that $\operatorname{diam}(U)<k$ for all $U\in \mathcal{U}$.
\end{proposition}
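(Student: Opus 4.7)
The plan is to verify the two implications separately using the metric description of $\lambda$ from Definition \ref{setare}. The ``if'' direction is routine, while the ``only if'' direction will reduce to Lemma \ref{hhhh} applied to sequences extracted from hypothetically oversized members of $\mathcal{U}$.

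For the ``if'' direction, suppose some $k>0$ bounds the diameters of all $U\in\mathcal{U}$. Each such $U$ is then bounded in $(X,d)$ and hence bounded with respect to $\lambda$, so condition (i) of uniform boundedness is immediate. For (ii), observe that $S_{\mathcal{U}}\subseteq\{(x,y):d(x,y)<k\}$ by construction; thus whenever $A\sim_{\mathcal{U}}B$, each point of $A$ lies within distance $k$ of some point of $B$ and vice versa, giving $d_{H}(A,B)\le k<\infty$ and so $A\lambda B$.

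For the converse I would argue by contradiction: assume $\mathcal{U}$ is uniformly bounded in the AS.R.\ sense but that for every $n\in\mathbb{N}$ some $U_{n}\in\mathcal{U}$ contains points $x_{n},y_{n}$ with $d(x_{n},y_{n})>n$. The crucial step is to verify the full hypothesis of Lemma \ref{hhhh} for the sequences $(x_{n})$ and $(y_{n})$. For any $I\subseteq\mathbb{N}$, set $A_{I}=\{x_{i}:i\in I\}$ and $B_{I}=\{y_{i}:i\in I\}$; since $(x_{i},y_{i}),(y_{i},x_{i})\in U_{i}\times U_{i}\subseteq S_{\mathcal{U}}$ for each $i\in I$, one obtains $A_{I}\subseteq S_{\mathcal{U}}(B_{I})$ and $B_{I}\subseteq S_{\mathcal{U}}(A_{I})$, i.e.\ $A_{I}\sim_{\mathcal{U}}B_{I}$. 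Condition (ii) of uniform boundedness then yields $A_{I}\lambda B_{I}$ for every $I$. Lemma \ref{hhhh} now supplies a constant $k$ with $d(x_{n},y_{n})<k$ for all $n$, contradicting $d(x_{n},y_{n})>n$ once $n\ge k$.

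The only subtlety worth flagging is that one must invoke condition (ii) of uniform boundedness for all index sets $I$ simultaneously, not merely for $I=\mathbb{N}$; applying it only to the whole sequences would give $d_{H}(A,B)<\infty$, which is too weak to rule out $d(x_{n},y_{n})\to\infty$. That is precisely the gap Lemma \ref{hhhh} is designed to bridge, and it makes the converse direction work.
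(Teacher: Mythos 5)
Your proof is correct and follows essentially the same route as the paper: the converse is handled exactly as in the paper's proof, by extracting $x_n,y_n\in U_n$ with $d(x_n,y_n)>n$, verifying $A_I\sim_{\mathcal{U}}B_I$ for every $I\subseteq\mathbb{N}$, and invoking Lemma \ref{hhhh}. The only difference is that you write out the ``if'' direction, which the paper dismisses as easy to verify; your version of it is also correct.
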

\begin{proof}
The "if" part is easy to verify. To prove the converse assume that, on the contrary, for each $n\in \mathbb{N}$ there are $U_{n}\in \mathcal{U}$ and $x_{n},y_{n}\in U_{n}$ such that $d(x_{n},y_{n})>n$. For each subset $I\subseteq \mathbb{N}$ we have $A_{I}=\{x_{i}\mid i\in I\}\sim_{\mathcal{U}} B_{I}=\{y_{i}\mid i\in I\}$ so $A_{I}\lambda B_{I}$. Thus the sequences $(x_{n})_{n\in \mathbb{N}}$ and $(y_{n})_{n\in \mathbb{N}}$ satisfy the hypothesis of \ref{hhhh}, a contradiction.
\end{proof}
Let us recall that for a family $\mathcal{M}$ of subsets of a set $X$, $\mu(\mathcal{M})$ denotes the multiplicity of $\mathcal{M}$ i.e the greatest number of elements of $\mathcal{M}$ that meets a point of $X$
\begin{definition}
Let $(X,\lambda)$ be an AS.R. space. We say that $\operatorname{asdim}_{\lambda}X\leq n$ if for all uniformly bounded cover $\mathcal{U}$ of $X$ there is a uniformly bounded cover $\mathcal{V}$ for $X$ such that $\mathcal{U}$ refines $\mathcal{V}$ and $\mu(\mathcal{V})\leq n+1$. We say $\operatorname{asdim}_{\lambda}X=n$ if $\operatorname{asdim}_{\lambda}X\leq n$ and $\operatorname{asdim}_{\lambda}X\leq n-1$ is not true. We call $\operatorname{asdim}_{\lambda}X$ the \emph{asymptotic dimension} of an AS.R. space $(X,\lambda)$.
\end{definition}
The proposition \ref{uni} shows that on a metric space $(X,d)$ we have $\operatorname{asdim}X=\operatorname{asdim}_{\lambda}X$, where $\lambda$ is the AS.R. associated to $d$.\\
\begin{proposition}\label{asdim}
Let $(X,\lambda)$ be an AS.R. space and let $Y\subseteq X$. Then $\operatorname{asdim}_{\lambda_{Y}}Y\leq \operatorname{asdim}_{\lambda}X$
\end{proposition}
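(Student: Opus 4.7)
The plan is to show that any uniformly bounded cover of $Y$ can be enlarged to a uniformly bounded cover of $X$, to which the hypothesis $\operatorname{asdim}_\lambda X\leq n$ can then be applied, and finally to pull the resulting cover back to $Y$. Write $n=\operatorname{asdim}_\lambda X$ (assume $n<\infty$, otherwise the inequality is trivial). The goal, given a uniformly bounded cover $\mathcal{U}$ of $(Y,\lambda_Y)$, is to produce a uniformly bounded cover $\mathcal{V}$ of $(Y,\lambda_Y)$ that is refined by $\mathcal{U}$ and satisfies $\mu(\mathcal{V})\leq n+1$.

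First I would form the enlargement $\mathcal{U}^{\prime}=\mathcal{U}\cup\{\{x\}\mid x\in X\setminus Y\}$, which is clearly a cover of $X$ consisting of bounded subsets. The main step, and the technical heart of the argument, is to verify that $\mathcal{U}^{\prime}$ is uniformly bounded in $(X,\lambda)$. Fix $A,B\subseteq X$ with $A\sim_{\mathcal{U}^{\prime}}B$. Since each member of $\mathcal{U}$ lies inside $Y$, a pair $(a,b)\in S_{\mathcal{U}^{\prime}}$ with $a\in X\setminus Y$ must come from a singleton $\{a\}\times\{a\}$, forcing $b=a$. Splitting $A=A_Y\cup A'$ and $B=B_Y\cup B'$ where the primed parts lie in $X\setminus Y$, this observation yields $A'=B'$, while the relation $A\sim_{\mathcal{U}^{\prime}}B$ restricts to $A_Y\sim_\mathcal{U} B_Y$ in $Y$. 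Uniform boundedness of $\mathcal{U}$ in $(Y,\lambda_Y)$ then gives $A_Y\lambda B_Y$, and property (i) of Proposition \ref{2setare} combined with reflexivity of $\lambda$ on $A'=B'$ delivers $A\lambda B$.

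Next, apply the hypothesis $\operatorname{asdim}_\lambda X\leq n$ to $\mathcal{U}^{\prime}$ to obtain a uniformly bounded cover $\mathcal{V}^{\prime}$ of $X$ with $\mathcal{U}^{\prime}$ refining $\mathcal{V}^{\prime}$ and $\mu(\mathcal{V}^{\prime})\leq n+1$. I would then define $\mathcal{V}=\{V\cap Y\mid V\in\mathcal{V}^{\prime},\,V\cap Y\neq\emptyset\}$. This is a cover of $Y$ whose multiplicity is bounded by that of $\mathcal{V}^{\prime}$, and every $U\in\mathcal{U}\subseteq\mathcal{U}^{\prime}$ satisfies $U\subseteq V$ for some $V\in\mathcal{V}^{\prime}$, hence $U\subseteq V\cap Y$, so $\mathcal{U}$ refines $\mathcal{V}$.

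It remains to see that $\mathcal{V}$ is uniformly bounded in $(Y,\lambda_Y)$. Boundedness of each $V\cap Y$ follows from Proposition \ref{4setare} since $V\cap Y\subseteq V$ and $V$ is bounded in $X$. For the second condition, the inclusion $S_\mathcal{V}\subseteq S_{\mathcal{V}^{\prime}}$ is immediate from $(V\cap Y)\times(V\cap Y)\subseteq V\times V$, so $A\sim_\mathcal{V} B$ for $A,B\subseteq Y$ implies $A\sim_{\mathcal{V}^{\prime}} B$, and uniform boundedness of $\mathcal{V}^{\prime}$ forces $A\lambda B$, i.e.\ $A\lambda_Y B$. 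This completes the proof that $\operatorname{asdim}_{\lambda_Y}Y\leq n$. The only nontrivial point is the verification in the second paragraph; everything else is bookkeeping about restrictions and refinements.
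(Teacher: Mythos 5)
Your proposal is correct and follows essentially the same route as the paper's proof: enlarge the cover of $Y$ by singletons from $X\setminus Y$, check uniform boundedness by splitting $A\sim_{\mathcal{U}'}B$ into the $Y$-part and the $(X\setminus Y)$-part, apply $\operatorname{asdim}_{\lambda}X\leq n$, and intersect the resulting cover with $Y$. The only difference is that you spell out the verifications (e.g.\ that singletons force $A'=B'$ and that $S_{\mathcal{V}}\subseteq S_{\mathcal{V}'}$) in more detail than the paper does.
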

\begin{proof}
Suppose that $\operatorname{asdim}_{\lambda}X=n$. Let $\mathcal{V}$ be a uniformly bounded cover of $Y$. Assume that $\mathcal{U}=\mathcal{V}\bigcup_{x\in X\setminus Y}\{\{x\}\}$. If $A,B\subseteq X$ and $A\sim_{\mathcal{U}}B$, then $(A\bigcap (X\setminus Y))=(B\bigcap (X\setminus Y))$ and $(A\bigcap Y)\sim_{\mathcal{V}}(B\bigcap Y)$. So $(A\bigcap Y)\lambda (B\bigcap Y)$ and (i) of \ref{2setare} shows that $A\lambda B$. Thus $\mathcal{U}$ is a uniformly bounded cover of $X$. Let $\mathcal{W}$ be a uniformly bounded cover of $X$ such that $\mathcal{U}$ refines it and $\mu(\mathcal{W})\leq n+1$. The family $\mathcal{W}_{Y}=\{W\bigcap Y\mid W\in \mathcal{W}\}$ is a uniformly bounded cover of $Y$ and $\mathcal{V}$ refines it. Clearly $\mu(\mathcal{W}_{Y})\leq n+1$ so $\operatorname{asdim}_{\lambda_{Y}}\leq n$.
\end{proof}
\begin{proposition}
Asymptotic equivalent AS.R. spaces have the same asymptotic dimension.
\end{proposition}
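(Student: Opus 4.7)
The plan is to prove $\operatorname{asdim}_{\lambda_2} Y \leq n \Rightarrow \operatorname{asdim}_{\lambda_1} X \leq n$; equality then follows by interchanging the roles of $f$ and its coarse inverse $g$. So let $f\colon X\to Y$ be an asymptotic equivalence and $g\colon Y\to X$ an AS.R.\ mapping with $g\circ f$ close to $\operatorname{id}_X$ and $f\circ g$ close to $\operatorname{id}_Y$. Given a uniformly bounded cover $\mathcal{U}$ of $X$, the strategy is: push $\mathcal{U}$ forward to a uniformly bounded cover of $Y$, apply the hypothesis there to obtain a refinement of multiplicity at most $n+1$, and pull back along $f$ to a cover of $X$ still refined by $\mathcal{U}$.

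The crucial preparatory lemma is that
$\mathcal{U}^{\ast}:=\{f(U):U\in\mathcal{U}\}\cup\{\{y\}:y\in Y\setminus f(X)\}$ is a uniformly bounded cover of $Y$. Coverage is clear, and each member is bounded, since $U\lambda_1 x$ implies $f(U)\lambda_2 f(x)$. The nontrivial point is to show $A\sim_{\mathcal{U}^{\ast}}B\Rightarrow A\lambda_2 B$. I would split $A=A_1\cup A_2$ and $B=B_1\cup B_2$ along $f(X)$ and its complement; the singleton members of $\mathcal{U}^{\ast}$ force $A_2=B_2$. For the pieces in $f(X)$, for each $a\in A_1$ I would pick $U_a\in\mathcal{U}$ and $b_a\in B_1$ with $a,b_a\in f(U_a)$, then lift to preimages $a^{\ast},b_a^{\ast}\in U_a$. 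The resulting sets $A^{\ast},B^{\ast}$ satisfy $A^{\ast}\sim_{\mathcal{U}}B^{\ast}$, hence $A^{\ast}\lambda_1 B^{\ast}$, and applying $f$ yields $A_1\lambda_2 B_1'$ for some $B_1'\subseteq B_1$. A symmetric construction produces $A_1'\subseteq A_1$ with $A_1'\lambda_2 B_1$, and property (i) of Proposition~\ref{2setare} combines the two into $A_1\lambda_2 B_1$, so $A\lambda_2 B$.

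Given the lemma, apply $\operatorname{asdim}_{\lambda_2}Y\leq n$ to $\mathcal{U}^{\ast}$ to obtain a uniformly bounded cover $\mathcal{V}$ of $Y$ that is refined by $\mathcal{U}^{\ast}$ with $\mu(\mathcal{V})\leq n+1$. Define $\mathcal{V}':=\{f^{-1}(V):V\in\mathcal{V}\}$. This covers $X$; each member is bounded by properness of $f$, and $\mu(\mathcal{V}')\leq\mu(\mathcal{V})\leq n+1$ because $f$ is a function. Uniform boundedness of $\mathcal{V}'$ uses the coarse inverse: if $A\sim_{\mathcal{V}'}B$ then $f(A)\sim_{\mathcal{V}}f(B)$, so $f(A)\lambda_2 f(B)$, whence $g(f(A))\lambda_1 g(f(B))$, and closeness of $g\circ f$ to $\operatorname{id}_X$ gives $A\lambda_1 B$. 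Lastly, $\mathcal{U}$ refines $\mathcal{V}'$: for each $U\in\mathcal{U}$, $f(U)\in\mathcal{U}^{\ast}$ lies in some $V_U\in\mathcal{V}$, so $U\subseteq f^{-1}(V_U)$.

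The main obstacle is the first step, namely showing $\mathcal{U}^{\ast}$ is uniformly bounded. This is the only stage where the proof is not formal, because $f$ is not assumed injective and $\{f(U):U\in\mathcal{U}\}$ alone need not satisfy the second clause of uniform boundedness; one must lift back, run the symmetric construction, and combine via Proposition~\ref{2setare}(i). Everything afterward is bookkeeping using properness, the standard inequality $\mu(f^{-1}(\mathcal{V}))\leq\mu(\mathcal{V})$, and the defining properties of close maps and asymptotic equivalences.
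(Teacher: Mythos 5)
Your proof is correct and follows essentially the same route as the paper's: transfer the given cover through the equivalence, coarsen it using the dimension hypothesis on the other space, and pull back along the preimage. The only cosmetic differences are that the paper pushes a cover of $Y$ forward along $g$ and invokes the subspace inequality (Proposition \ref{asdim}) on $g(Y)$, whereas you push forward along $f$ and inline the same singleton-augmentation trick to cover all of $Y$; likewise your lift-and-symmetrize step via Proposition \ref{2setare}(i) plays the role of the paper's choice $C=g^{-1}(A)\cap S_{\mathcal{U}}(g^{-1}(B))$ and $D=g^{-1}(B)\cap S_{\mathcal{U}}(g^{-1}(A))$ in handling non-injectivity.
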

\begin{proof}
Let $f:X\rightarrow Y$ and $g:Y\rightarrow X$ be two AS.R. mappings between AS.R. spaces $(X,\lambda)$ and $(Y,\lambda^{\prime})$, such that $g\circ f(A)\lambda A$ and $f\circ g(B)\lambda^{\prime}B$ for all subsets $A\subseteq X$ and $B\subseteq Y$. Suppose that $\operatorname{asdim}_{\lambda}X=n$. Let $\mathcal{U}$ be a uniformly bounded cover of $Y$ and let $g^{*}(\mathcal{U})=\{g(U)\mid U\in \mathcal{U}\}$. For all $U\in \mathcal{U}$ we have $f\circ g(U)\lambda U$ so $g(U)\subseteq f^{-1}(f\circ g(U))$ is bounded. Assume that $A,B\subseteq g(Y)$ and $A\sim_{g^{*}(\mathcal{U})}B$. Let $C=g^{-1}(A)\bigcap S_{\mathcal{U}}(g^{-1}(B))$ and $D=g^{-1}(B)\bigcap S_{\mathcal{U}}(g^{-1}(A))$. Since $A\sim_{g^{*}(\mathcal{U})}B$, it is straightforward to show that $g(C)=A$ and $g(D)=B$. We have $C\sim_{\mathcal{U}}D$ so $C\lambda^{\prime} D$. Since $g$ is an AS.R. mapping $A\lambda B$. Thus $g^{*}(\mathcal{U})$ is a uniformly bounded cover of $g(Y)$. By \ref{asdim}, $\operatorname{asdim}_{\lambda_{g(Y)}}g(Y)\leq n$. So there is a uniformly bounded cover $\mathcal{V}$ of $g(Y)$ such that $g^{*}(\mathcal{U})$ refines it and $\mu(\mathcal{V})\leq n+1$. Let $g_{*}(\mathcal{V})=\{g^{-1}(V)\mid V\in \mathcal{V}\}$. Since $g$ is an AS.R. mapping all members of $g_{*}(\mathcal{V})$ are bounded. Suppose that $M,N\subseteq Y$ and $M\sim_{g_{*}(\mathcal{V})}N$. It is easy to verify $g(M)\sim_{\mathcal{V}}g(N)$ so $g(M)\lambda g(N)$. Since $f\circ g(M)\lambda^{\prime} M$ and $f\circ g(N)\lambda^{\prime} N$ so $M\lambda^{\prime}N$. Thus $g_{*}(\mathcal{V})$ is a uniformly bounded cover of $Y$. It is straightforward to show that $\mathcal{U}$ refines $g_{*}(\mathcal{V})$ and $\mu(g_{*}(\mathcal{V}))\leq n+1$. Therefore $\operatorname{asdim}_{\lambda^{\prime}}Y\leq \operatorname{asdim}_{\lambda}X$. Similarly one can show that $\operatorname{asdim}_{\lambda}X\leq \operatorname{asdim}_{\lambda^{\prime}}Y$.
\end{proof}
\section*{ACKNOWLEDGEMENTS}
The authors wish to express their gratitude to Jesus A. Alvarez Lopez for several helpful comments.


\begin{thebibliography}{99}


\bibitem{asdim}G. Bell, A. Dranishnikov \emph{Asymptotic dimension}, Topology and its Applications, Vol. {\bf155}, 1265-1296, 2008.
\bibitem{Ind}A. Dranishnikov,\emph{ On asymptotic inductive dimension}, JP J. Geom. Topol. {\bf1} (3), 239-247, 2001.
\bibitem{Alt}J. Dydak, C.S. Hoffland, \emph{An alternative definition of coarse structures}, Topology and its Applications, Vol. {\bf155}, Issue 9, 1013-1021, 2008.
\bibitem{Ef1} V. A. Efremovic, \emph{Infinitesimal space}, DAN SSSR {\bf76}, 341-343, 1951.
\bibitem{Ef2} V. A. Efremovic, \emph{The geometry of proximity. I}, Mat. Sb. {\bf31}, 189-200, 1951.
\bibitem{Nai} S. A. Naimpally and B. D. Warrack, Proximity Spaces, Cambridge Tract in Mathematics No. {\bf59}, Cambridge University Press, Cambridge, UK, 1970, paperback (2008).
\bibitem{Nor} I. Protasov, \emph{Normal ball structures}, Mat. Stud. {\bf20}, 3-16, 2003.
\bibitem{Roe} J.~Roe, Lectures on Coarse Geometry, University Lecture Series, {\bf31}. American Mathematical Society, Providence, RI, 2003.
\bibitem{Tuk} J. W. Tukey, \emph{Convergence and uniformity in topology}, Annals of Mathematics Studies, vol. {\bf2}, Princeton University Press, Princeton, NJ, 1940.
\bibitem{Wei} A. Weil, \emph{Sur les espaces a structure uniforme et sur la topologie generale}, Herman, Paris, 1937.
\bibitem{Wil} S. Willard, General Topology, Addison-Wesley, Reading, MA, 1970.
\bibitem{Wr} N. Wright, \emph{Simultaneous metrizability of coarse spaces}, Proceedings of the
American Mathematical Society, vol. {\bf139}, N. 9, 3271-3278, 2011.


\end{thebibliography}
\end{document}